\newcommand{\grB}{{$\N$-graded manifold}}
\newcommand{\grBs}{{$\N$-graded manifolds}}
\newcommand{\ZM}{{VBC}} %
\newcommand{\Zakrzewski}{{vector bundle comorphism}}
\newcommand{\BIGZakrzewski}{{Vector bundle comorphism}}
\newcommand{\degree}{{order}}
\newcommand{\thh}[1]{#1^{\mathrm{th}}} 
\newcommand{\st}[1]{#1^{\mathrm{st}}} 
\newcommand{\R}{\mathbb{R}}
\newcommand{\Z}{\mathbb{Z}}
\newcommand{\N}{\mathbb{N}}
\newcommand{\D}{\mathbb{D}}
\newcommand{\wt}[1]{\widetilde{#1}} 
\newcommand{\wh}[1]{\widehat{#1}} 
\newcommand{\und}{\underline}
\newcommand{\ideal}[1]{\langle {#1} \rangle}
\newcommand{\graf}{\operatorname{graph}}
\newcommand{\pa}{\partial}
\newcommand{\Sec}{\operatorname{Sec}}
\newcommand{\rank}{\operatorname{rank}}
\newcommand{\id}{\operatorname{id}}
\newcommand{\pt}{\operatorname{pt}}
\newcommand{\dd}{\mathrm{d}}
\newcommand{\V}{\mathbf{V}} 
\newcommand{\X}{\mathfrak{X}}
\newcommand{\Tau}{\mathcal{T}} 
\newcommand{\lra}{\longrightarrow}
\newcommand{\ra}{\rightarrow}
\newcommand{\mZM}{\Rightarrow}
\newcommand{\eps}{\varepsilon}
\newcommand{\del}{\delta}
\newcommand{\Cross}{{\sf X}} 
\newcommand{\plus}{\boldsymbol{\pmb{+}}}
\newcommand{\minus}{\boldsymbol{\pmb{-}}}
\newcommand{\rhoL}{\rho_L} 
\newcommand{\rhoR}{\rho_R}
\newcommand{\A}{\mathcal{A}} 
\newcommand{\E}[1]{E^{[#1]}} 
\newcommand{\prealg}{pre-algebroid} 
\newcommand{\Prealg}{Pre-algebroid}
\newcommand{\T}{\mathrm{T}} 
\newcommand{\TT}[1]{\mathrm{T}^{#1}} 
\newcommand{\tclass}[2]{\bm{\mathrm{t}}^{#1}#2} 
\newcommand{\dbydt}{\frac{\mathrm{d}}{\mathrm{d}t}}
\newcommand{\dbydtk}[1]{\frac{\mathrm{d}^{#1}}{\mathrm{d}t^{#1}}}
\newcommand{\core}[1]{\widehat{#1}} 
\newcommand{\Core}{\mathbf{C}}
\newcommand{\catZM}{{\mathcal{VBC}}} 
\newcommand{\catGB}{\mathcal{GM}}
\newcommand{\catHA}{\mathcal{HA}} 
\newcommand{\VB}{\mathcal{VB}} 
\newcommand{\G}{\mathcal{G}} 
\newcommand{\Cf}{\mathcal{C}^\infty}
\newcommand{\g}{\mathfrak{g}} 
\newcommand{\Adm}{\operatorname{Adm}} 
\newcommand{\VA}{\mathcal{VA}} 
\newcommand{\VF}{\mathfrak{X}}
\def\relto{\rightarrow\!\!\vartriangleright} 
\def\<#1>{\left\langle #1\right\rangle}
\def\(#1){\left( #1\right)}
\numberwithin{equation}{section}
\newtheorem{thm}{Theorem}[section]
\newtheorem{prop}[thm]{Proposition}
\newtheorem{lem}[thm]{Lemma}
\theoremstyle{definition}
\newtheorem{df}[thm]{Definition}
\newtheorem{ex}[thm]{Example}
\newtheorem{rem}[thm]{Remark}
\begin{document}

\allowdisplaybreaks

\newcommand{\arXivNumber}{1708.03174}

\renewcommand{\PaperNumber}{135}

\FirstPageHeading

\ShortArticleName{Higher-Order Analogs of Lie Algebroids via Vector Bundle Comorphisms}

\ArticleName{Higher-Order Analogs of Lie Algebroids\\ via Vector Bundle Comorphisms}

\Author{Micha{\l} J\'{O}\'{Z}WIKOWSKI~$^\dag$ and Miko{\l}aj ROTKIEWICZ~$^\ddag$}

\AuthorNameForHeading{M.~J\'o\'zwikowski and M.~Rotkiewicz}

\Address{$^\dag$~Institute of Mathematics, Polish Academy of Sciences,\\
\hphantom{$^\dag$}~\'{S}niadeckich 8, 00-656 Warszawa, Poland}
\EmailD{\href{mailto:m.jozwikowski@mimuw.edu.pl}{m.jozwikowski@mimuw.edu.pl}}
\URLaddressD{\url{https://www.impan.pl/~mjoz/}}

\Address{$^\ddag$~Faculty of Mathematics, Informatics and Mechanics, University of Warsaw,\\
\hphantom{$^\ddag$}~Banacha 2, 02-097 Warszawa, Poland}
\EmailD{\href{mailto:mrotkiew@mimuw.edu.pl}{mrotkiew@mimuw.edu.pl}}
\URLaddressD{\url{https://www.mimuw.edu.pl/~mrotkiew/}}

\ArticleDates{Received January 10, 2018, in final form December 12, 2018; Published online December 29, 2018}

\Abstract{We introduce the concept of a higher algebroid, generalizing the notions of an algebroid and a higher tangent bundle. Our ideas are based on a description of (Lie) algebroids as vector bundle comorphisms -- differential relations of a special kind. In our approach higher algebroids are vector bundle comorphism between graded-linear bundles satisfying natural axioms. We provide natural examples and discuss applications in geometric mechanics.}

\Keywords{higher algebroid; vector bundle comorphism; almost-Lie algebroid; graded mani\-fold; graded bundle; algebroid lift; variational principle}

\Classification{58A20; 58A50; 70G65; 58E30; 70H50}

{\small \setcounter{tocdepth}{2}
\tableofcontents}

\section{Introduction}\label{sec:intro}

\looseness=-1 The main goal of this paper is to introduce a concept of a (general) \emph{higher algebroid}\footnote{In this paper under the name \emph{algebroid} we understand generalizations of the notion of a Lie algebroid obtained by weakening its axioms. In the geometric mechanics literature these generalizations are commonly known as \emph{almost-Lie} (no Jacobi identity), \emph{skew} (no Jacobi and no anchor-bracket compatibility), or \emph{general} (no Jacobi, no anchor-bracket compatibility, no skew-symmetry of the bracket) algebroids (the latter are often simply called \emph{algebroids}). Throughout the paper we shall refer to these objects as to \emph{$($Lie$)$ algebroids} meaning ``Lie algebroids and their above-mentioned generalizations''. They should not be confused with \emph{Courant algebroids} or other similarly named concepts.\label{foot:algebroid}} and study basic properties and applications of this notion. We work essentially within the framework of the theory of \grBs\ and the theory of differential relations (\Zakrzewski s).

{\bf Why higher algebroids?} Lie algebroids and their generalizations proved to be a fruitful area of study in the last three decades, either on their own right, as an offspring of Poisson geometry, and, particularly, in geometric mechanics. The latter direction was originated by Weinstein \cite{Weinstein_1996} and, following seminal papers of Mart\'{i}nez \cite{Martinez_geom_form_mech_lie_alg_2001, Martinez_lagr_mech_lie_alg_2001}, it developed into many different sub-branches (see a survey paper~\cite{Cortes_Inn_survey_2006} for a detailed discussion of the available literature).

From our point of view one of the most spectacular achievements of the algebroid-oriented study of mechanics is the recognition of the geometric structures standing behind variational calculus. The relation between (Lie) algebroids and variations is, perhaps, best described in two papers \cite{GG_var_calc_gen_alg_2008, GGU_geom_mech_alg_2006}, which themselves were inspired by much earlier works of Tulczyjew \cite{Tulczyjew_1976,Tulczyjew_1976a}. What makes the mentioned works particularly interesting is that the authors were able to identify the true geometric essence of variational calculus, which keeps working despite getting rid of several most natural assumptions, such as the Jacobi identity and even the skew-symmetry of the Lie algebroid bracket. In fact, putting aside the existence of real-life examples, it seems that the concept of a general algebroid \cite{JG_PU_Lie_alg_P-N_str_1997, JG_PU_Algebroids_1999} is the uttermost geometric reality behind first-order variational calculus.

From the above perspective it is most natural to look for similar geometric structures related with variational calculus of higher order. Thus we would like to introduce a geometric object, a~\emph{$($Lie$)$ higher algebroid}, which is present whenever a variational problem involving higher velocities is considered. We have already tried to address this question from the perspective of the classical groupoid--algebroid reduction~\cite{MJ_MR_models_high_alg_2015}, yet now we propose a more abstract and a more general approach.

{\bf Towards a proper definition -- objects.} In order one, the (Lie) algebroid structure ``lives'' on a vector bundle, which can be often interpreted as the velocity bundle of some physical system, i.e., as a quotient (in a suitable sense) of some tangent bundle $\T M$. The structure of the tangent bundle is thus our most important example of a (Lie) algebroid. Obviously, in higher-order variational calculus the role of $\T M$ is taken over by its higher analogue $\TT{k} M$, the bundle of $k$-velocities.\footnote{$\T^k M$ is the bundle of $k$-jets of curves in $M$ and it should not be mistaken with $\T^{(k)} M = \T(\T(\ldots (\T M)\ldots))$~-- the iterated tangent bundle.} Locally, $\TT{k} M$ can be characterized by specifying the points, velocities, accelerations, and higher derivatives of curves on $M$. Thus, having the perspective of variational calculus in mind, it is reasonable to expect that a proper object hosting a higher algebroid structure would be a bundle of some sort with fibre coordinates sharing a similar derivative-like nature as coordinates on $\TT{k} M$. This expectation is met by a class of fibre bundles with a typical fibre diffeomorphic with~$\R^n$, however, in general, transition functions need not to be linear but are polynomials which are homogeneous with respect to a prescribed gradation of coordinate functions (see Section~\ref{sec:gr_bund} for a detailed discussion). They are a~special case of the concept of a \emph{graded manifold} introduced by Voronov~\cite{Voronov_2002}.
Namely, they are \emph{purely even non-negatively graded manifolds} in the sense of Voronov, meaning that local coordinates are graded by non-negative integers. We follow Voronov's idea that such a non-negatively graded manifold, which expands into a tower of fibrations with polynomial transition functions (of a special form), should be regarded as a generalization of a vector bundle~\cite{Voronov_2002}, see also \cite{Voronov_Q_mfds_high_lie_alg_2010, Voronov_2012_napl, Voronov_2012}. (For this, we use notation typical for (vector) bundles~-- see Section~\ref{sec:gr_bund}.)
Properties of such bundles and their application in theoretical mechanics where also considered in several papers \cite{BGG_2015,BGR, JG_MR_gr_bund_hgm_str_2011}. For brevity, we shall use ``$\N$-graded'' for ``non-negatively graded'' (thinking that zero is included into the natural numbers) or simply ``graded'' if no other gradings will appear. Note that we consider only the even case, though the results are expected to hold in the super setting too.

{\bf Towards a proper definition -- structures.} The structure of an order one (Lie) algebroid on a~vector bundle $\sigma\colon E\ra M$ is most commonly introduced as a bi-linear bracket operation on the space of sections of its host bundle $\sigma$. It is quite obvious that this definition has no direct gene\-ra\-li\-zation to the higher-order case as, in particular, the higher tangent bundle \mbox{$\tau^k_M\colon \TT{k} M\ra M$}, which is an obvious candidate to host a higher (Lie) algebroid structure, admits no canonical bracket operation on its space of sections. Fortunately, the (Lie) algebroid structure on $\sigma$ has several other characterizations equivalent to the standard definition, perhaps more suitable for a~generalization. To mention just a few, we may describe an algebroid on~$\sigma$ as a certain homological vector field on a graded supermanifold~$E[1]$, a de Rham-like derivative in the space of $E$-differential forms (i.e., sections of $\bigwedge^{\bullet} E^\ast$), a certain linear 2-tensor field on the dual bundle~$\sigma^\ast$, a morphism $\eps\colon \T^\ast E\ra \T E^\ast$ of double vector bundles, or as a differential relation of a special kind (a \Zakrzewski ) $\kappa\subset\T E\times\T E$.

From the perspective of variational calculus, the latter characterization is the one most appealing to us.\footnote{Let us remark that, the double vector bundle morphism $\eps\colon \T^\ast E\ra \T E^\ast$ favored in the so-called Tulczyjew approach to geometric mechanics \cite{GG_var_calc_gen_alg_2008, GGU_geom_mech_alg_2006} is just the dual of $\kappa$.} This important role of $\kappa$ can be easily explained. For the tangent algebroid structure on $\tau_M\colon \T M\ra M$, $\kappa$ is, in fact, the canonical isomorphism $\kappa_M\colon \T\T M\ra \T\T M$ interchanging the two vector bundle structures on $\T\T M$ -- cf.\ Example~\ref{ex:tang_alg}. Its role in variational calculus is to present `a jet of curves as a curve of jets' (i.e., a~variation of the tangent lift of a trajectory in $M$ is constructed from the tangent lift of the curve of virtual displacements by means of $\kappa_M$). The situation in the higher-order case is no different and, perhaps, even easier to understand. Namely on a~higher-tangent bundle $\tau_M^k\colon \TT{k} M\ra M$, we have the canonical isomorphism $\kappa^k_M\colon \TT{k} \T M\ra \T\TT{k} M$ which, similarly as before, allows to express a variation of a~$\thh{k}$-tangent lift of a trajectory in $M$ as a $\thh{k}$-tangent lift of the related generator (a curve of virtual displacements) \cite{MJ_MR_higher_var_calc_2013}. When performing the standard Lie groupoid -- Lie algebroid reduction the character (but not the role) of $\kappa$ changes~-- the reduced object is no longer a differentiable map but, in general, only a differential relation (but of a special kind). Again there is no essential difference between the first- and higher-order cases \cite{MJ_MR_models_high_alg_2015}.

Summing up, geometrically relation $\kappa\subset\T E\times\T E$ is responsible for the construction of admissible variations of curves on a (Lie) algebroid on $E$. Its presence can be easily motivated by the groupoid -- algebroid reduction procedure: when performing a reduction of a variational problem it is not enough to reduce the trajectories, but also the variations! An this is how~$\kappa$ appears. Variational calculus on (Lie) algebroids can be, however, successfully developed in abstract terms (with a prominent role of $\kappa$) without referring to the reduction procedure~\cite{GG_var_calc_gen_alg_2008}.

All the above suggests that the language of differential relations is suitable to speak about higher (Lie) algebroids (at least from the perspective of applications in variational calculus). The first order-case \cite{GG_var_calc_gen_alg_2008} and the known higher-order examples from \cite{MJ_MR_prototypes} advocate that these relations should have a special nature, namely they should be \emph{\Zakrzewski s} (\emph{\ZM s} in short; see Definition~\ref{def:zm}). This assumption, in principle, allows to uniquely characterize admissible variations in terms of their generators (virtual displacements). Let us remark that the important role of \Zakrzewski s in the theory of Lie groupoids and Lie algebroids was recognized also in~\cite{Cattaneo_Dherin_Weinstein_2012}.

{\bf Higher (Lie) algebroids, examples and applications.} Summing up the above heuristic considerations, we postulate a \emph{higher $($Lie$)$ algebroid} to be a \grB\ $\sigma^k\colon E^k\ra M$ equipped with a \Zakrzewski\ $\kappa^k\subset\TT{k} E^1\times \T E^k$, which also should be naturally graded (Definition~\ref{def:higher_algebroid}). Here $E^1$ is the canonical reduction of $E^k$ to a first-order bundle, i.e., to a vector bundle.

For $k=1$ we recover the standard notion of a (Lie) algebroid. In fact, we devote entire Section~\ref{sec:algebroids_as_zm} to carefully formulate the theory of (the first order) Lie algebroids and their generalizations in the language of \ZM s. This has a two-fold purpose. First of all, the language of differential relations was so far hardly used in the theory of (Lie) algebroids and most of the results of this section are new (yet rather straightforward, so no true originality can be claimed by us). Secondly, this first-order formulation has a direct generalization to the higher order case, with most of the definitions being completely analogous.

 Apart from the first-order case, important examples of such structures are provided by higher tangent bundles $\TT{k} M$ with the isomorphisms~$\kappa^k_M$ mentioned above. Another important class of examples is given by the \emph{prolongations of almost-Lie algebroids} introduced in \cite{Col_deDiego_seminar_2011} and \cite{MJ_MR_models_high_alg_2015} (see also~\cite{Saunders_2004}). Prolongations naturally appear in higher-order variational calculus as reductions of higher tangent bundles of Lie groupoids. In our previous publications \cite{MJ_MR_prototypes,MJ_MR_models_high_alg_2015} (see also \cite{Martinez_2015}) we study several concrete cases. Additionally some examples related to Lie groups are discussed in Section~\ref{sec:examples}.

We further argue for the usefulness of the concept of a higher (Lie) algebroid introduced in this work by providing two particular applications. First of all, given a higher algebroid structure on $\sigma^k\colon E^k\ra M$ we were able to define a whole family of \emph{algebroid lifts} of sections of~$E^1$ to vector fields on~$E^k$. These generalize the well-know constructions of the vertical and the complete lifts of a section in the presence of a (Lie) algebroid structure. Secondly, in Section~\ref{sec:var_calc} we show that the geometric formulation of higher-order variational calculus is possible within the framework of higher (Lie) algebroids (from this perspective the algebroid lifts are directly related with conservation laws). In fact, we show that the latter works fine also for a more general class of \emph{higher \prealg s}, thus further generalization of our theory is possible. We refer to a recent publication \cite{Colombo_2017} and the references therein for numerous concrete interesting examples of higher-order variational problems.

{\bf Alternative approaches.} In the literature there have been a few attempts to introduce higher analogs of (Lie) algebroids. The version of a higher analog of Lie algebroids that we develop is quite distinct from the work of Mackenzie~\cite{Mackenzie_2011} and Voronov~\cite{Voronov_Q_mfds_high_lie_alg_2010, Voronov_2012}, though we make use of some of their concepts and methods. Voronov \cite{Voronov_Q_mfds_high_lie_alg_2010} proposed that higher algebroids should be understood as homological vector fields of weight one on a non-negatively graded supermanifold (generalizing Vaintrob's description of Lie algebroids as $Q$-manifolds \cite{Vaintrob_1997}). The language of $Q$-manifolds is central in the whole bracket geometry. Connections between higher (Lie) algebroids in the sense of this work and Voronov's ones or $k$-fold Lie algebroids of Mackenzie are not clear and need further studies.

A much more down-to-earth idea is to define higher (Lie) algebroids as bundles of higher-jets of admissible curves on a standard Lie algebroid. In this paper we refer to this construction as the \emph{prolongation of an algebroid} and briefly discuss it in Section~\ref{ssec:prolongations}. Such objects appeared for the first time in a seminar talk by Colombo and de Diego \cite{Col_deDiego_seminar_2011}, however without a full recognition of the relevant geometric structure. The latter was studied by us in \cite{MJ_MR_models_high_alg_2015} and successfully applied in higher-order variational problems on Lie algebroids and Lie groupoids \cite{MJ_MR_prototypes} (see also \cite{Martinez_2015}). Prolongations of algebroids in the above sense constitute a particular subclass of higher (Lie) algebroids in the understanding of this paper, perhaps the most important one as it contains reductions of higher tangent bundles of Lie groupoids.

The most recent attempt is due to Bruce, Grabowska and Grabowski. In \cite{Bruce_Grabowska_Grabowski_2016} they constructed a \emph{linearisation functor}, which, to a \grB\ $\sigma^k\colon E^k\ra M$ of \degree\ $k$, canonically associates a manifold $l\big(E^k\big)$ equipped with a graded-linear structure of a bi-\degree\ $(k-1,1)$ over $E^1$ and $E^{k-1}$, respectively. Now the higher (Lie) algebroid structure on $\sigma^k$ is defined as a (Lie) algebroid structure on the vector bundle $l\big(E^k\big)\ra E^{k-1}$ compatible with the second grading. The idea here is to mimic the canonical inclusion $\TT{k} M\hookrightarrow \T \TT{k-1} M$, which makes the higher tangent bundle $\TT{k}M$ a sub-object of the tangent Lie algebroid of $\TT{k-1}M$. In this way the construction of~\cite{Bruce_Grabowska_Grabowski_2016} admits higher tangent bundles and also prolongations of algebroids as examples of higher (Lie) algebroids, similarly to our approach. However, these two notions of a higher algebroid are different, as can be easily seen from coordinate calculations already in \degree\ 2. The construction of \cite{Bruce_Grabowska_Grabowski_2016} can be adapted to develop higher-order mechanics on \grBs\ \cite{BGG_2015}, equivalent to our results~\cite{MJ_MR_prototypes} in comparable cases. So perhaps the difference here is of a philosophical nature. In the classical case: should we treat $\thh{k}$-order variational calculus on a manifold $M$ as the first-order theory on~$\TT{k-1}M$ thus passing through unnecessary degrees of freedom (as would be in the spirit of \cite{Bruce_Grabowska_Grabowski_2016}) or do we prefer to work directly with $\TT{k} M$, as is in our formalism.

{\bf Organization of the paper.} In Section~\ref{sec:algebroids_as_zm} we reformulate the theory of (Lie) algebroids in the language of \Zakrzewski s. Section~\ref{sec:gr_bund} contains basic information concerning \grBs\ and weighted structures. In Section~\ref{sec:high_alg}, using the result of the previous two sections, we formulate the definition of a higher algebroid, provide a few natural examples and study in detail higher algebroid lifts and the Lie axiom in Section~\ref{ssec:higher_lie}. In Section~\ref{sec:var_calc} we develop the framework of higher-order variational calculus on higher algebroids, putting emphasis on admissible variations and their relation with conservation laws. Section~\ref{sec:examples} contains a study of higher algebroid structures inherited from higher tangent bundles of a Lie group $G$. In particular, we characterize all higher subalgebroids and describe a class of quotients of $\TT{k} G/G$. In Section~\ref{sec:final} we sketch a few perspectives of future research. Finally, in Appendix~\ref{sec:app} we have hidden most of the proofs of more technical results appearing in this work.

\subsection{Double vector bundles}
We shall frequently work with \emph{double vector bundles}, geometric objects sharing two (compatible) vector bundle structures. A typical example is the total space $\T E$ of the tangent bundle of a~vector bundle $\sigma\colon E\ra M$. The two vector bundle structures are: the tangent projection $\tau_E\colon \T E\ra E$, and $\T \sigma\colon \T E\ra \T M$, the tangent lift of~$\sigma$. The foundations of the theory of double vector bundles were laid by J.~Pradines~\cite{Pradines1977} (see also \cite[Chapter~9]{Mackenzie_lie_2005} for basic definitions, examples and historical remarks). Due to the results of~\cite{JG_MR_hi_VB_2009} we may formulate a definition of a~double vector bundle in the following way.
\begin{df}[double vector bundle]\label{df:DVB} A structure of a \emph{double vector bundle} (\emph{DVB}, in short) on a manifold $D$ is a pair of vector bundles $\sigma_A\colon D \ra A$, $\sigma_B\colon D \ra B$ such that for any $t, s\in\R$ and $x\in D$
\begin{gather}\label{eqn:DVB}
t\cdot_A s \cdot_B x = s\cdot_B t \cdot_A x,
\end{gather}
where $\cdot_A$ and $\cdot_B$ denote the multiplications by scalars in $\sigma_A$ and $\sigma_B$, respectively.

A \emph{morphism of DVBs} $(D,\sigma_A,\sigma_B)$ and $(D',\sigma_{A'},\sigma_{B'})$ is a smooth map $\phi\colon D\ra D'$ which is linear as a map from $\sigma_A$ to $\sigma_A'$ and, simultaneously, as a map from $\sigma_B$ to $\sigma_{B'}$.
\end{df}
The bases $A$, $B$ of a double vector bundle as above carry induced vector bundle structures over the common base $M$ giving rise to a diagram
\begin{gather*}
\xymatrix{D\ar[d]_{\sigma_A} \ar[rr]^{\sigma_B} && B\ar[d]^{\sigma^B_M} \\
A \ar[rr]^{\sigma^A_M} && M}
\end{gather*}
consisting of four vector bundle projections. All $M$, $A$, $B$ can be seen as submanifolds of $D$ via the zero section embeddings. In particular, the zero section $0_A\colon A\ra D$ defines a vector bundle structure on $A$ as a substructure of the vector bundle structure on~$\sigma_B\colon D\ra B$. Double vector bundles are often said to be `vector bundles in the category of vector bundles' what can be understood as the condition that all four structure maps of~$\sigma_A$ (the bundle projection~$\sigma_A$, the zero section~$0_A$, the scalar multiplication~$\cdot_A$ and the addition $+_A$) are vector bundle morphism with respect to the vector bundle structure of $\sigma_B$. The notion of a~double vector bundle naturally generalizes to the notion of a multi-\grB\ (see Section~\ref{sec:gr_bund}).

\section{(Lie) algebroids as \Zakrzewski s}\label{sec:algebroids_as_zm}

In this section we characterize Lie algebroids and their relaxed versions (such as generalized algebroids in the sense of Grabowski--Urba\'nski~\cite{JG_PU_Algebroids_1999}) by means of differential relations of a special kind (\Zakrzewski s, \ZM\ in short). This will be the cornerstone of our definition of higher algebroids in Section~\ref{sec:high_alg}.

\subsection{The category of \Zakrzewski s}

Following \cite{HM_1993, Mackenzie_lie_2005} we will recall a definition of a \emph{comorphism} between vector bundles (\ZM, in short). Later we shall study basic properties of \ZM s, i.e., the corresponding map on sections, morphisms between \ZM s, and \ZM s compatible with an additional linear structure.

{\bf \BIGZakrzewski s.}
\begin{df}[\Zakrzewski, \ZM ]\label{def:zm} A \emph{\Zakrzewski } (\emph{\ZM }, in short) from a vector bundle $\sigma_1\colon E_1\ra M_1$ to a vector bundle $\sigma_2\colon E_2\ra M_2$ is a relation $r\subset E_1 \times E_2$ of a~special form. Namely, there exists an ordinary vector bundle morphism $\phi\colon E_2^\ast\ra E_1^\ast$ covering a smooth map $\und{\phi}\colon M_2\ra M_1$ such that $r = \phi^\ast$, i.e., $r$ is the union of graphs of linear maps $r_y\colon (E_1)_{\und{\phi}(y)} \ra (E_2)_{y}$ such that $\phi_y\colon (E_2^\ast)_{y} \ra (E_1^\ast)_{\und{\phi}(y)}$ is the dual of the linear map $r_y$, where $y$ varies in $M_2$. We also say that the \ZM\ $r$ \emph{covers} the base map $\und{r}:=\und{\phi}$. Usually we shall denote a \ZM\ as $r\colon \sigma_1\relto \sigma_2$ or as a diagram
\begin{gather*}
\xymatrix{E_1 \ar[d]^{\sigma_1} \ar@{-|>}[r]^{r} & E_2 \ar[d]^{\sigma_2} \\ %
M_1 & M_2. \ar[l]_{\und{r}} }
\end{gather*}
In particular, the \ZM\ $r$ is a vector subbundle of $\sigma_1\times \sigma_2$.
\end{df}

There is a general concept of comorphisms between fibre bundles, (Lie) algebroid comorphisms and the corresponding (Lie) groupoid comorphisms. A full presentation of these ideas can be found in \cite{Cattaneo_Dherin_Weinstein_2012} and \cite{Mackenzie_lie_2005} with details on the history of these and related notions. The concept of a comorphism can be dated back at least to Bourbaki~\cite{Bourbaki_1971} and has been appearing ever since, sometimes under different names, see, e.g., \cite{Guillemin_Sternberg_1977,Higgins_Mackenzie_1990,Huebschmann_1990}. We would like to acknowledge that \ZM\ is the simplest example of a groupoid morphism in the sense of Zakrzewski \cite{Zakrz_quant_class_pseud_I_1990,Zakrz_quant_class_pseud_II_1990}. In the appendix of~\cite{MJ_MR_models_high_alg_2015} we studied some properties of \ZM s (using the name \emph{Zakrzewski morphism} proposed in this context in \cite{GG_var_calc_gen_alg_2008}).

\begin{rem}[\ZM s and maps on sections]\label{rem:zm_sections}
An important property of a \ZM\ is that it induces a mapping between the corresponding spaces of sections. Namely, given a \ZM\ $r\colon \sigma_1 \relto \sigma_2$ as in Definition \ref{def:zm} and a sections $s\in \Sec_{M_1}(E_1)$ we define $\hat{r}(s)\in \Sec_{M_2}(E_2)$ by
\begin{gather*}
\hat{r}(s) (y):= r_y(s(\und{r}(y))),\qquad \text{for every $y\in M_2$.}
\end{gather*}
In other words, the value of $\hat{r}(s)$ at a given point $y\in M_2$ is the unique element of the fibre~$(E_2)_y$ which is $r$-related to the value of $s$ at $\und{r}(y)$.
By the linearity of $r$, for every two sections $s,s'\in \Sec_{M_1}(E_1)$ we have $\hat{r}(s+s')=\hat{r}(s)+\hat{r}(s')$. Moreover, by construction, for any $f\in\Cf(M_1)$
\begin{gather}\label{e:hat_r_axiom}
\hat{r}(f\cdot s) = \und{r}^\ast(f) \cdot \hat{r}(s) .
\end{gather}

Conversely, any ${\mathbb R}$-linear map $\Sec_{M_1}(E_1)\ra \Sec_{M_2}(E_2)$ satisfying \eqref{e:hat_r_axiom} for some underlying map $\und{r}\colon M_2\ra M_1$ gives rise to a \ZM\ $r\colon \sigma_1\relto \sigma_2$. Checking this property is left to the reader.
\end{rem}

{\bf Morphisms between \ZM s.} A \ZM\ can describe a single geometric object. As we shall see shortly (Lemma~\ref{lem:kappa_from_sigma} and Proposition~\ref{prop:sigma_from_kappa}), a~(Lie) algebroid structure on a bundle $\sigma\colon E\ra M$ is fully encoded by a \ZM\ $\kappa$ from the tangent lift $\T\sigma\colon \T E\ra\T M$ to the tangent bundle \mbox{$\tau_E\colon \T E\ra E$}. This suggests that the standard notion of a (Lie) algebroid morphism should be expressible by a, properly defined, morphism between the corresponding \Zakrzewski s. Now we are going to introduce such a~notion under the name $\catZM$-morphism, i.e., a~morphism in the category $\catZM$ of \Zakrzewski s.

\begin{df}[$\catZM$-morphism]\label{def:zm_morphism} Let $(\sigma_i\colon E_i\to M_i)$ and $(\sigma_i'\colon E_i'\to M_i')$, $i=1, 2$, be four VBs and let $r$ and $r'$ be the following \Zakrzewski s:
\begin{gather}\label{def:ZMrr}\begin{split} &
\xymatrix{E_1 \ar[d]^{\sigma_1} \ar@{-|>}[r]^{r} & E_2 \ar[d]^{\sigma_2} \\ %
M_1 & M_2, \ar[l]_{\und{r}} }
\qquad
\xymatrix{E'_1 \ar[d]^{\sigma_1'} \ar@{-|>}[r]^{r'} & E'_2 \ar[d]^{\sigma_2'} \\ %
M_1' & M_2'. \ar[l]_{\und{r}'} }\end{split}
\end{gather}
We say that a pair $(\phi_1, \phi_2)$ of VB morphisms $\phi_i\colon E_i\to E_i'$ covering smooth maps $\und{\phi_i}\colon M_i\to M_i'$, $i=1,2$, is a \emph{$\catZM$-morphism from $r$ to $r'$} (and write $(\phi_1,\phi_2)\colon r\mZM r'$) if the following diagram
\begin{gather}\label{d:ZMtoZM}\begin{split}&
\xymatrix{
& E_1\ar[ld]_{\phi_1} \ar[dd]^<<<<<{\sigma_1} \ar@{-|>}[rr]^{r} && E_2 \ar[ld]_{\phi_2}\ar[dd]^{\sigma_2} \\
E_1'\ar[dd]_{\sigma_1'} \ar@{-|>}[rr]^<<<<<<<<<{r'} && E_2'\ar[dd]_>>>>>>>{\sigma_2'} & \\
& M_1\ar[ld]_{\und{\phi_1}} && M_2 \ar[ld]_{\und{\phi_2}} \ar[ll]_>>>>>{\und{r}} \\
M_1' && M_2' \ar[ll]_{\und{r}'} &
}\end{split}
\end{gather}
is commutative in the following sense:
\begin{enumerate}[(i)]
\item \label{item:def_morphism_ZM_1}The base maps commute, i.e., $\und{\phi_1}\circ \und{r} = \und{r}'\circ \und{\phi_2}\colon M_2\to M_1'$.
\item \label{item:def_morphism_ZM_2}At the level of fibres, for any $y\in M_2$ the following compositions of linear maps
\begin{gather*}
E_{1, x}\xrightarrow{r_{y}} E_{2,y}\xrightarrow{\phi_2} E_{2, y'}' \qquad\text{and}\qquad
E_{1, x}\xrightarrow{\phi_1} E_{1, x'}'\xrightarrow{r'_{y'}} E_{y'}'
\end{gather*}
coincide. Here $x=\und{r}(y)$, $y' = \und{\phi_2}(y)$, $x' = \und{r}'(y')=\und{\phi_1}(x)$ are determined by $y\in M_2$.
\end{enumerate}

It is easy to see that $\catZM$-morphisms can be naturally composed, and that \ZM s with $\catZM$-morphisms form a \emph{category of \Zakrzewski s}, denoted $\catZM$.
\end{df}

\begin{rem}[dualization of \ZM s] By dualizing the diagram~(\ref{d:ZMtoZM}) we get a diagram of the same type:
\begin{gather*}
\xymatrix{
& E_1^\ast \ar[dd]^<<<<<{\sigma_1^\ast} && E_2^\ast \ar[ll]_{r^\ast} \ar[dd]^{\sigma_2^\ast} \\
E_1^{'\ast}\ar@{-|>}[ru]^{\phi_1^\ast} \ar[dd]_{(\sigma_1')^\ast}  && E_2^{'\ast}\ar[ll]_<<<{r^{'\ast}} \ar@{-|>}[ru]^{\phi_2^\ast} \ar[dd]_>>>>>>>{(\sigma_2')^\ast} & \\
& M_1\ar[ld]_{\und{\phi_1}} && M_2 \ar[ld]_{\und{\phi_2}} \ar[ll]_>>>>>{\und{r}} \\
M_1' && M_2'. \ar[ll]_{\und{r}'} &
}
\end{gather*}
It is clear that $(\phi_1, \phi_2)\colon r\mZM r'$ is a $\catZM$-morphism if and only if $(r'^{\ast}, r^\ast)\colon \phi_2^\ast\mZM \phi_1^\ast$ is so. Thus the notion of a morphism in the category $\catZM$ does not reduce to the notion of a vector bundle morphism.
\end{rem}

\begin{ex}[phase lift] Let $\und{\phi}\colon M \ra M'$ be a smooth map, and consider the phase lift of $\und{\phi}$ which is a \ZM\ $\T^\ast\und{\phi}\colon \T^\ast M' \relto \T^\ast M$ covering $\und{\phi}$. Then the tangent lift of $\T^\ast\und{\phi}$ is a \ZM\ as well and $(\T \tau^\ast_{M'}, \tau_{\T^\ast M})\colon \T \T^\ast \und{\phi} \mZM \T^\ast\und{\phi}$ is a morphism in the category $\catZM$.
\end{ex}

Another useful formulation of the conditions presented in the definition of a $\catZM$-morphism is possible:
\begin{prop}[characterization of $\catZM$-morphisms]\label{p:ZM-morphism} Let $\phi_i\colon E_i\ra E_i'$ $(i=1, 2)$ be vector bundle morphisms and let $r, r'$ be \ZM s as in Definition~{\rm \ref{def:zm_morphism}}. Then $(\phi_1,\phi_2)\colon r\mZM r'$ is a $\catZM$-morphism if and only if for any $r$-related vectors $X\in E_1$, $Y\in E_2$ their images $\phi_1(X)$ and $\phi_2(Y)$ are $r'$-related. Equivalently, $(\phi_1\times \phi_2)(r)\subset r'$.
\end{prop}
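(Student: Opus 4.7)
The plan is to unwind the two definitions symmetrically: the ``diagrammatic'' formulation of Definition~\ref{def:zm_morphism} (conditions (i) and (ii), asserting respectively a compatibility of base maps and of the fibrewise linear maps $r_y, r'_{y'}, \phi_1, \phi_2$) on one side, and the ``set-theoretic'' inclusion $(\phi_1\times\phi_2)(r)\subset r'$ on the other. The key observation is that $(X,Y)\in r$ means precisely $y:=\sigma_2(Y)$, $x:=\und{r}(y)=\sigma_1(X)$ and $Y=r_y(X)$; hence $r'$-relatedness of $\phi_1(X), \phi_2(Y)$ is an identity between fibrewise linear maps over a matched base point, and this is exactly what conditions (i) and (ii) together record.

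For the forward direction, I would assume $(\phi_1,\phi_2)\colon r\mZM r'$ and take any $(X,Y)\in r$. Setting $y':=\und{\phi_2}(y)$ and $x':=\und{\phi_1}(x)$, condition~(i) gives $\und{r}'(y')=x'=\sigma_1'(\phi_1(X))$, so the base-point compatibility required for $(\phi_1(X),\phi_2(Y))$ to be $r'$-related holds. Condition~(ii), applied to $X$, yields $\phi_2(r_y(X))=r'_{y'}(\phi_1(X))$, that is $\phi_2(Y)=r'_{y'}(\phi_1(X))$, which is exactly $(\phi_1(X),\phi_2(Y))\in r'$.

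For the converse, I would assume $(\phi_1\times\phi_2)(r)\subset r'$. To recover~(i), fix $y\in M_2$ and observe that the zero vector $0\in(E_1)_{\und{r}(y)}$ is $r$-related to $0\in(E_2)_y$ by linearity of $r_y$; applying the inclusion and reading off base points forces $\und{r}'(\und{\phi_2}(y))=\und{\phi_1}(\und{r}(y))$, which is~(i). Once~(i) is in hand, (ii) follows by testing the inclusion on pairs $(X,r_y(X))\in r$: the image $(\phi_1(X),\phi_2(r_y(X)))$ must lie in $r'$, and since $r'$ is the union of graphs of the linear maps $r'_{y'}$, this forces $\phi_2(r_y(X))=r'_{\und{\phi_2}(y)}(\phi_1(X))$ for every $X\in (E_1)_{\und{r}(y)}$.

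I do not expect any genuine obstacle: the statement is essentially a bookkeeping translation between two packagings of the same data. The only subtle point is that in the converse one must first establish the base-map compatibility~(i) before one can even phrase the fibrewise equality~(ii) in terms of the maps $r'_{y'}$; the zero-section trick above is the natural way to extract~(i) from the set-theoretic inclusion.
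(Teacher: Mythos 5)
Your proposal is correct and follows essentially the same route as the paper: the forward direction is the same direct unwinding of conditions (i) and (ii) of Definition~\ref{def:zm_morphism}, and your converse (which the paper merely declares ``analogous'' and leaves to the reader) is the natural completion, with the zero-vector argument correctly supplying the base-map compatibility before the fibrewise identity is read off.
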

\begin{proof}Assume that $(\phi_1,\phi_2)\colon r\mZM r'$ is a $\catZM$-morphism and take $X$ and $Y$ such that $X\sim_r Y$. Denote $x:=\sigma_1(X)$, $y:=\sigma_2(Y)$, $x':= \und{\phi_1}(x)$ and $y':= \und{\phi_2}(y)$. According to Definition~\ref{def:zm_morphism} $r'_{y'}(\phi_1(X)) = \phi_2(Y)$, in other words $\phi_1(X)$ and $\phi_2(Y)$ are $r'$-related. The proof the in reverse direction is analogous and is left to the reader.
\end{proof}

{\bf Linear \ZM s.} Of our special interest will be \emph{linear \ZM s}, i.e., \ZM s compatible with an additional linear structure. Later in Section~\ref{sec:gr_bund} (Definition~\ref{def:wZM}) we will generalize this notion to the concept of \emph{weighted \ZM s}, which are compatible with an additional graded structure. We shall follow a general scheme already presented in the definitions of a \emph{linear} Poisson structure, $\VB$-groupoids, $\VB$-algebroids, weighted algebroids etc.~\cite{BGG, Mackenzie_lie_2005}.

\begin{df}[linear \ZM ] \label{def:linear_ZM} Let $r$ be a \ZM\ from a vector bundle $\sigma\colon E\ra M$ to a vector bundle $ \sigma'\colon E'\ra M'$. Assume that the total spaces $E$ and $E'$ carry linear structures $\tau\colon E\ra N$ and $\tau'\colon E'\ra N'$ compatible with $\sigma$ and $\sigma'$, respectively (i.e., $E$ and $E'$ are DVBs in the sense of Definition~\ref{df:DVB}). We say that~$r$ is a~\emph{linear \ZM } if $r\subset E\times E'$ is a vector subbundle of $\tau\times\tau'$.
 \end{df}

Note that a linear \ZM\ $r\subset E\times E'$ defined as above is a vector subbundle of both $\sigma\times \sigma'$ and $\tau\times\tau '$. It follows that $r$ projects to linear relations in $M\times M'$ and $N\times N'$. In particular, the base map $\und{r}\colon M'\ra M$ is a VB morphism (cf.~\eqref{cond:rho_l} and~\eqref{cond:rho_r} in Lemma~\ref{lem:kappa_core} below). Particular examples of linear \ZM s (not all) are provided by dualizing DVB morphisms.

Recall that if $\sigma\colon E\ra M$ is a vector bundle, then the tangent space $\T E$ carries two compatible vector bundle structures $\T\sigma\colon \T E\ra\T M$ and $\tau_E\colon \T E\ra E$. In the remaining part of this paragraph we shall study linear \ZM s intertwining these two VB structures. The common kernel of  $\T \sigma$ and $\tau _E$ (called the \emph{core} $\Core(\T E)$ of DVB $(\T E, \T \sigma, \tau_E)$) is the
subbundle  $C=\V_M E \simeq E$ of the vertical bundle $\V E = \ker \T \sigma$ consisting of vertical vectors based at $M\subset E$. The two additive structures of $\T E$ coincide on the core $C$. Note that the core $C$  acts on $\T E$ by an addition of vertical vectors, i.e., for every $A\in \T_a E$ and every $e\in C_{\sigma(a)}\simeq E_{\sigma(a)}$ we define
\begin{gather*} A\plus e:=A+_{\tau_E}\V_a e\in \T_a E,\end{gather*}
where $\V_a\colon \V_{\sigma(a)} E \rightarrow \V_a E \subset \T_a E$ are canonical isomorphisms.
Note that the above action does not affects the two VB projections $\T\sigma$ and $\tau_E$ on $\T E$, i.e., $\T\sigma(A\plus e)=\T\sigma(A)$ and $\tau_E(A\plus e)=\tau_E(A)$. Moreover, the following useful identity holds
\begin{gather}\label{eqn:T_fa}
\T(f\cdot a) v = f\cdot_{\T\sigma} (\T a) v \plus v(f)\cdot a,
\end{gather}
where $f\in \Cf(M)$, $v\in \VF(M)$ and $a\in\Sec(E)$.

\begin{rem}[local form of a class of linear \ZM s]\label{rem:local_kappa_a}
Let us write a general (local) form of a linear \ZM\ $\kappa$ intertwining the two compatible VB structures on $\T E$. For simplicity and further applications we also assume that $\kappa$ projects onto $\graf(\id_M)$ under the bundle projection $\tau_M\circ \T\sigma \times \sigma \circ \tau_E\colon \T E\times \T E\ra M\times M$, i.e., $\kappa$ relates only elements in the same fibre over~$M$. (Note that $\tau_M\circ \T\sigma = \sigma \circ \tau_E$.) Let $(x^a, y^i)$ be local linear coordinates on $E$. We use the standard notation $(x^a, y^i,\dot x^b, \dot y^j)$ for induced coordinates on the tangent bundle $\T E$ and, to describe $\kappa \subset \T E\times \T E$, we shall underline coordinates in the second copy of $\T E$.

 As $\kappa$ is a double vector subbundle of $(\T E\times \T E, \T\sigma \times \tau_E, \tau_E\times \T \sigma)$, the graph of the base map $\rhoL:=\und{\kappa}\colon E\ra \T M$ of $\kappa$ is a vector subbundle in $E\times \T M$, i.e., it is a graph of a VB morphism. Thus it maps a vector $(\und{x}^a,\und{y}^i)$ from $E$ to a vector of the form $(x^a=\und{x}^a, \dot x^b=Q^b_i(x)\und{y}^i)$ in $\T M$. Now, since for every fixed $a\in E$ relation $\kappa_a\colon (\T E)_{\rhoL(a)}\ra \T_a E$ is a linear map, it turns out that $\kappa$ is given by a linear mapping of coordinates $(y^i,\dot{y}^j)$ to coordinates $(\und{\dot x}^b,\und{\dot y}^j)$ with coefficients depending on coordinates $(\und{x}^a,\und{y}^i)$ in $E$. Furthermore, we know that $\kappa$ is bi-homogeneous. Calculation on weights: $w(y^i)=(1,0)=w(\dot{\und{x}}^a)$, $w(\dot{x}^a)=(0,1)=w(\und{y}^i)$, $w(\dot{y}^i)=(1,1)=w(\dot{\und{y}}^i)$ ensures us that $\kappa$ is determined by equations of the form
\begin{gather}\label{eqn:kappa_local}
\kappa\colon \
\begin{cases}
x^a=\und{x}^a, \\
\dot{x}^b = Q^b_i(x)\, \und{y}^i, \\
 \dot{\und{x}}^b = \wt{Q}^b_i(x) y^i, \\
 \dot{\und{y}}^k = \alpha^k_i(x) \dot{y}^i + Q^k_{ij}(x) \und{y}^iy^j,
	\end{cases}
\end{gather}
where $x = (x^a)$ and $Q^b_i(x)$, $\wt{Q}^b_i(x)$, $\alpha^j_i(x)$ and $Q^k_{ij}(x)$ are some smooth functions defined locally on $M$.

 Note that if we additionally assume that $\kappa$ induces the identity on the core bundle, then the coefficients $\alpha^j_i(x)$ are simply constants $\delta^j_i$. We shall show later (see Lemma~\ref{lem:kappa_from_sigma}) that such linear \ZM s correspond to (Lie) algebroid structures. The anchor maps (left and right) are locally given by functions $Q^b_i(x)$ and $\wt{Q}^b_i(x)$, respectively, while functions $Q^k_{ij}(x)$ encode the bracket operation in a given basis of sections of the bundle $\sigma$, dual to functions $(y^i)$.
\end{rem}

\begin{lem}[a class of linear \ZM s]\label{lem:kappa_core} Let $\sigma\colon E\ra M$ be a vector bundle and let $\kappa\colon \T\sigma\relto \tau_E$ be a linear \ZM . Assume additionally that $\kappa$ induces the identity on the core bundles, i.e., $\kappa\cap(C\times C)=\graf(\id_C)$. Then:
\begin{enumerate}[$(i)$]\itemsep=0pt
	\item\label{cond:rho_l} The base map $\rhoL:=\und{\kappa}\colon E\ra \T M$ is a VB morphism covering the identity map $\id_M$.
	\item\label{cond:rho_r} Relation $\kappa':=(\tau_E\times\T\sigma)(\kappa)\subset E\times\T M$, which is the $\tau_E\times\T\sigma$-projection of $\kappa$, is a graph of a VB morphism $\rhoR\colon E\ra \T M$ covering the identity map $\id_M$.
	\item\label{cond:core_action} Relation $\kappa$ respects the action of the core $C$, i.e., if $(A,B)\in \kappa$ then
	$(A\plus c,B\plus c)\in\kappa$, whenever the addition of $c\in C$ makes sense.
	\item\label{cond:kappa_inverse} The inverse relation $\kappa^T$ is a linear \ZM\ from $\T\sigma$ to $\tau_E$ over the base map $\rhoR\colon E\ra \T M$ which also induces the identity on the cores.
\end{enumerate}
\end{lem}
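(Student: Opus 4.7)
The plan is to prove~(i) first from the bare double-vector-bundle structure of $\kappa$ together with the core hypothesis, then invoke Remark~\ref{rem:local_kappa_a} to write $\kappa$ in the local form~\eqref{eqn:kappa_local} with $\alpha^j_i=\delta^j_i$, and finally read off~(ii), (iii), and~(iv) from this coordinate description. The only substantive step is~(i); the remainder amounts to chasing the local formulas.

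For~(i), note that $\kappa$ is simultaneously a vector subbundle of $\T\sigma\times\tau_E$ (since it is a \ZM; see the end of Definition~\ref{def:zm}) and of $\tau_E\times\T\sigma$ (by the linearity assumption), hence a sub-DVB of $\T E\times\T E$. Its image $\graf(\und{\kappa})\subset\T M\times E$ under $\T\sigma\times\tau_E$ is therefore a vector subbundle of $\tau_M\times\sigma\colon \T M\times E\to M\times M$ over some submanifold $\Sigma\subset M\times M$. The identity-on-cores hypothesis gives $(c,c)\in\kappa$ for every $c\in C_m$; since $\T\sigma(c)=0$ and $\tau_E(c)=0_m$, the \ZM\ compatibility $\T\sigma(c)=\und{\kappa}(\tau_E(c))$ forces $\und{\kappa}(0_m)=0_{\T_m M}$, so $\graf(\id_M)\subset\Sigma$. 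Conversely, if $(m_1,m_2)\in\Sigma$, the fiber of $\graf(\und{\kappa})$ over $(m_1,m_2)$ is a linear subspace of $\T_{m_1}M\oplus E_{m_2}$, so in particular it contains the origin $0\in E_{m_2}$; but then $\und{\kappa}(0_{m_2})=0_{\T_{m_2}M}\in\T_{m_2}M$ forces $m_1=m_2$. Hence $\Sigma=\graf(\id_M)$, and over each point $(m,m)$ the fiber of $\graf(\und{\kappa})$ is the whole $E_m$ embedded linearly by $\und{\kappa}$. This proves that $\rhoL=\und{\kappa}$ covers $\id_M$ and is fiberwise linear, i.e., a VB morphism.

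With~(i) in hand, the standing assumption of Remark~\ref{rem:local_kappa_a} is met and $\kappa$ takes the local form~\eqref{eqn:kappa_local}. Substituting the coordinate expression $(x,0,0,c^k)$ of a core vector $c\in C_m$ into $(c,c)\in\kappa$ and comparing with the last line of~\eqref{eqn:kappa_local} yields $c^k=\alpha^k_i(x)c^i$ for arbitrary $c$, whence $\alpha^k_i=\delta^k_i$. Now~(ii) follows by applying $\tau_E\times\T\sigma$ to a general element of $\kappa$: one obtains the pair $\bigl((x,y),(x,\tilde Q^b_i(x)y^i)\bigr)$, i.e., the graph of the VB morphism $\rhoR(x,y)=(x,\tilde Q^b_i(x)y^i)$ over $\id_M$. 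For~(iii) a core vector $c$ at $\sigma(a)$ has coordinates $(\sigma(a),0,0,c^k)$, and the action $\plus c$ shifts only the $\dot y$-coordinates by $c^k$; the first three equations of~\eqref{eqn:kappa_local} are insensitive to this shift, and the fourth is preserved precisely because $\alpha=\id$. Finally, for~(iv), swapping the two copies of $\T E$ preserves the sub-DVB property, and re-reading~\eqref{eqn:kappa_local} with underlined and non-underlined coordinates interchanged displays $\kappa^T$ in the same canonical form, with $Q$ and $\tilde Q$ swapped and $\alpha=\delta$ preserved. Thus $\kappa^T$ is a linear \ZM\ from $\T\sigma$ to $\tau_E$ whose left base map is $\rhoR$ and which still induces the identity on cores.
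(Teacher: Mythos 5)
Your proof is correct and follows essentially the same route as the paper: both arguments reduce to the local normal form \eqref{eqn:kappa_local} of Remark~\ref{rem:local_kappa_a} (with $\alpha^k_i=\delta^k_i$ forced by the core hypothesis) and read the four statements off it. The only difference is cosmetic ordering: the paper first observes that the core condition gives $\kappa\cap(M\times M)=\Delta_M$, hence $\kappa$ relates only elements in the same fibre over $M$ so that Remark~\ref{rem:local_kappa_a} applies and all of $(i)$--$(iv)$ follow from \eqref{eqn:kappa_local}, whereas you establish $(i)$ abstractly first (using the same double-vector-subbundle projection fact that the remark itself invokes) and then use the local form for the remaining items.
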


\begin{proof}The assertion has a rather straightforward geometric justification based essentially on the bi-homogeneity of $\kappa$. For brevity, however, we prefer the following local argument.

Since $\kappa$ is the identity on the cores, we conclude that $\kappa\cap (M\times M) = \Delta_M\subset M\times M$ is a diagonal, and hence $\kappa$ relates only the elements in the same fibres over $M$. Now we are precisely in the situation described in Remark~\ref{rem:local_kappa_a} and the assertion follows easily from the local description~\eqref{eqn:kappa_local}. \end{proof}

\subsection{Description of (Lie) algebroids in terms of \ZM s}\label{ssec:alg_as_zm}
In this part we shall recall the definition of a general algebroid, and latter rephrase it in the language of \Zakrzewski s introduced above. A possibility of such a reformulation is of course well-recognized in the literature since the very introduction of the concept of a~general algebroid~\cite{JG_PU_Algebroids_1999} (see also~\cite{Leon_Marrero_Martinez_2005,Martinez_2008}). However, this topic was never systematically studied for its own sake. In particular, we are not aware that the axioms of a (Lie) algebroid were ever directly formulated in terms of the corresponding \ZM s. Despite this, we do not claim any originality in this area, as such a formulation is straightforward and natural. Our goal is rather to show the consistency and naturality of the approach to (Lie) algebroids based on differential relations (the notions of a subalgebroid, a morphism between algebroids, the Lie axiom, various specific types of algebroids, etc. are intrinsically defined within the category of \Zakrzewski s). In consequence, we prepare the ground for a later definition of a higher algebroid in Section~\ref{sec:high_alg}.

{\bf General algebroids.} General algebroids were introduced by Grabowski and Urba\'nski \mbox{\cite{JG_PU_Lie_alg_P-N_str_1997,JG_PU_Algebroids_1999}} as double vector bundle morphisms of a special kind. Their approach was motivated by the study of the geometry of mechanics and variational calculus originated by Tulczyjew \cite{Tulczyjew_1976,Tulczyjew_1976a}. Skew algebroids, almost-Lie algebroids and Lie algebroids may be regarded as special subclasses of this general notion.

\begin{df}[general algebroid]\label{def:algebroid}
A \emph{general algebroid} structure on a vector bundle \mbox{$\sigma\colon E\!\ra\! M$} is given by a bilinear \emph{bracket} $[\cdot,\cdot]$ on the space of smooth sections of $\sigma$, together with a pair of vector bundle maps (\emph{left} and \emph{right anchors}) $\rhoL,\rhoR\colon E\ra \T M$ over the identity on $M$ such that
\begin{gather}\label{eqn:bracket}
[f\cdot a,g\cdot b]=f\rhoL(a)(g)\cdot b-g\rhoR(b)(f)\cdot a+fg\cdot[a,b]
\end{gather}
for every sections $a,b\in\Sec_M(E)$ and every smooth functions $f,g\in C^\infty(M)$.

In addition
\begin{enumerate}[(i)]\itemsep=0pt
\item\label{cond:skew} If the bracket is skew-symmetric, i.e., $[a,b]=-[b,a]$ (in particular, left and right anchors coincide, i.e., $\rhoL=\rhoR=:\rho$ and we speak simply about the \emph{anchor}) we call $\sigma$ a \emph{skew algebroid}.
\item\label{cond:al} If $\sigma$ is a skew algebroid and the anchor $\rho$ maps the algebroid bracket $[\cdot,\cdot]$ to the Lie bracket of vector fields on $M$, i.e.,
\begin{gather}
\label{eqn:al_algebroid}
\rho[a,b]=[\rho(a),\rho(b)]_{\T M} ,
\end{gather}
we call $\sigma$ an \emph{almost-Lie algebroid} (AL algebroid, in short).
\item\label{cond:lie} If $\sigma$ is an almost-Lie algebroid and the bracket $[\cdot,\cdot]$ satisfies the Jacobi identity, we call $\sigma$ a \emph{Lie algebroid}.
\end{enumerate}
\end{df}

 We will refer to the objects from the above definition as to \emph{$($Lie$)$ algebroids}, i.e., writing that, say, $(\sigma,\rhoL,\rhoR,[\cdot,\cdot])$ is a (Lie) algebroid, means that it is either a Lie algebroid, an AL algebroid, a skew algebroid, or just a general algebroid. By using the word ``(Lie)'' we emphasize that all the above concepts were derived from the standard notion of the Lie algebroid by relaxing its axioms. Moreover, this should prevent possible confusions with Courant algebroid and other notions of an algebroid present in the literature.

{\bf From (Lie) algebroids to \ZM s.} We will now construct a canonical Zakrzewski morphism related with a given (Lie) algebroid structure. Recall Lemma~\ref{lem:kappa_core} describing the structure of a~class of linear \ZM s intertwining the two VB structures on the total space of the tangent bundle $\T E$ of a~vector bundle $\sigma\colon E\ra M$. It turns out that an algebroid structure on $\sigma$ induces such a~\ZM .

\begin{lem}[from (Lie) algebroids to \ZM s]\label{lem:kappa_from_sigma} Let $(\sigma\colon E\ra M,[\cdot,\cdot], \rhoL,\rhoR)$ be a $($Lie$)$ algebroid. Formula
\begin{gather}\label{eqn:kappa_from_bracket}
\kappa_a\left[\T b(\rhoL(a))\right]:=\T a(\rhoR(b))\plus [a,b],
\end{gather}
where $a,b\in \Sec_M(E)$ are arbitrary sections, extends to the unique relation $\kappa\subset\T E\times \T E$ such that
\begin{enumerate}[$(i)$]\itemsep=0pt
	\item\label{cond:kappa_biliner} $\kappa$ is a linear \Zakrzewski\ from $\T\sigma\colon \T E\ra \T M$ to the tangent bundle $\tau_E\colon \T E\ra E$ covering the left anchor $\rhoL\colon E\ra \T M$,
	\item\label{cond:core} $\kappa$ induces the identity on the core bundle, i.e., $\kappa\cap(C\times C)=\graf(\id_C)$.
\end{enumerate}
Moreover, the $(\tau_E\times \T\sigma)$-projection of $\kappa$ is the right anchor $\rhoR\colon E\ra \T M$.
\end{lem}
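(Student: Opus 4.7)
The plan is to construct $\kappa$ locally from the structure functions of the (Lie) algebroid using the normal form provided by Remark~\ref{rem:local_kappa_a} and Lemma~\ref{lem:kappa_core}, and then verify that the resulting object satisfies the coordinate-free formula~\eqref{eqn:kappa_from_bracket}; since the right-hand side of~\eqref{eqn:kappa_from_bracket} is manifestly intrinsic, this verification automatically shows that the local definition is independent of the trivialisation and hence patches to a globally defined relation. Concretely, in local coordinates $(x^a,y^i)$ on $E$ with dual frame of sections $(e_i)$, I would record the algebroid data in the frame by $\rhoL(e_i)=Q^b_i(x)\partial_{x^b}$, $\rhoR(e_i)=\widetilde{Q}^b_i(x)\partial_{x^b}$ and $[e_i,e_j]=c^k_{ij}(x)\,e_k$, and define $\kappa$ locally via~\eqref{eqn:kappa_local} using these functions, with $\alpha^k_i=\delta^k_i$ and $Q^k_{ij}=c^k_{ij}$. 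By inspection this is a bi-linear sub-VB of $(\T E\times\T E,\,\T\sigma\times\tau_E,\,\tau_E\times\T\sigma)$ which for each $e\in E$ is the graph of a linear map $(\T E)_{\rhoL(e)}\to(\T E)_e$; thus it is a linear \Zakrzewski\ from $\T\sigma$ to $\tau_E$ covering $\rhoL$, which is~(i). Restriction of~\eqref{eqn:kappa_local} to the cores (setting $\dot x^b=\dot{\und x}^b=0$) gives the identity on $C$, which is~(ii).

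To check~\eqref{eqn:kappa_from_bracket} for general sections $a=a^i(x)e_i$ and $b=b^j(x)e_j$, I would evaluate both sides in the chosen coordinates: the left-hand side follows by substituting the tangent lift $\T b(\rhoL(a))$ into~\eqref{eqn:kappa_local}, while the right-hand side is expanded using~\eqref{eqn:T_fa} for $\T a(\rhoR(b))$ and the Leibniz rule~\eqref{eqn:bracket} (together with the definition of the $c^k_{ij}$) for $[a,b]$. The derivative terms $\partial_{x^a}a^i$ and $\partial_{x^a}b^j$ produced by the tangent lifts cancel precisely against the corresponding Leibniz contributions to $[a,b]$, and the remaining algebraic part $c^k_{ij}\,a^ib^j$ matches the $Q^k_{ij}\,\und{y}^i y^j$ term of~\eqref{eqn:kappa_local}. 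Since the right-hand side of~\eqref{eqn:kappa_from_bracket} is intrinsic, two local definitions on overlapping charts automatically agree on the spanning set of vectors of the form $\T b(\rhoL(e))$; $\R$-linearity of $\kappa$ and the core-action property of Lemma~\ref{lem:kappa_core} then extend this agreement to every fibre $(\T E)_{\rhoL(e)}$, yielding a globally defined $\kappa$.

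Uniqueness follows by the same normal-form argument: any candidate satisfying~(i) and~(ii) must have the form~\eqref{eqn:kappa_local} with $\alpha^k_i=\delta^k_i$ by Lemma~\ref{lem:kappa_core}, and evaluating~\eqref{eqn:kappa_from_bracket} at frame sections $a=e_i$, $b=e_j$ pins down $Q^b_i$, $\widetilde{Q}^b_i$ and $c^k_{ij}$ to be exactly the algebroid data. The right-anchor claim is then immediate: by~\eqref{eqn:kappa_local} the $\tau_E\times\T\sigma$-projection sends $(x^a,y^i)\mapsto(x^a,\dot x^b=\widetilde{Q}^b_i(x)y^i)$, which is the local form of $\rhoR$. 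The delicate point of the whole argument is the well-definedness of~\eqref{eqn:kappa_from_bracket}: different pairs of sections may produce the same vector $\T b(\rhoL(a(m)))$ over a common base point $e=a(m)$, and the matching of the two Leibniz-type rules~\eqref{eqn:bracket} and~\eqref{eqn:T_fa}, together with the core action, is precisely the mechanism guaranteeing that the right-hand side is well-defined. The coordinate computation above is essentially this compatibility check.
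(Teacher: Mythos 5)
Your proposal is correct, but it takes a genuinely different route from the paper's proof. The paper argues intrinsically: it first extends \eqref{eqn:kappa_from_bracket} to the formula $\kappa_a\left[\T b(\rhoL(a))\plus c\right]:=\T a(\rhoR(b))\plus [a,b]\plus c$ -- the extra core term being necessary because when $\rhoL(a)=0$ the vectors $\T b(\rhoL(a))$ alone do not determine $\kappa_a$ on the whole fibre $(\T E)_{\rhoL(a)}$ -- and then shows this prescription is independent of the presentation $B=\T b(\rhoL(a))\plus c$, using tensoriality of $\T a(\rhoR(b))\plus[a,b]$ in $a$ and the Leibniz rule \eqref{eqn:bracket}, with bi-linearity checked afterwards via the rescaling identities \eqref{eqn:rescalling}. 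You instead build $\kappa$ chart by chart from the normal form \eqref{eqn:kappa_local} with $\alpha^k_i=\delta^k_i$ and $Q^k_{ij}=c^k_{ij}$, verify \eqref{eqn:kappa_from_bracket} for arbitrary sections by a coordinate computation, and then glue (and prove uniqueness) by exploiting that the right-hand side of \eqref{eqn:kappa_from_bracket} is intrinsic, together with the core-action property of Lemma~\ref{lem:kappa_core} -- which is indeed indispensable, not a convenience, exactly at the points where $\rhoL(e)=0$. The computational heart (the Leibniz-type cancellation of the $\partial a$ and $\partial b$ terms) is the same in both arguments, but your organization trades the paper's abstract well-definedness argument for an explicit local model plus a patching step; as a by-product you obtain the local expression of $\kappa$ (recovering the paper's later remark on its local form) and a very transparent uniqueness argument by evaluation on a frame, whereas the paper's route never needs coordinates and establishes the extended (core-shifted) formula directly. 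One small imprecision: the core $C$ of $(\T E,\T\sigma,\tau_E)$ is cut out by $y^i=0$, $\dot x^a=0$ (and $\und{y}^i=0$, $\dot{\und{x}}^a=0$ in the second copy), not by vanishing of the $\dot x$'s alone; restricting \eqref{eqn:kappa_local} to $y^i=\und{y}^i=0$ forces $\dot x^a=\dot{\und{x}}^a=0$ and $\dot{\und{y}}^k=\dot y^k$, so your claim $\kappa\cap(C\times C)=\graf(\id_C)$ does hold, but the stated substitution should be corrected.
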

 The proof is given in Appendix~\ref{sec:app}.

\begin{rem}[the inverse of $\kappa$]\label{rem:kappa_T} Note that, according to Lemma~\ref{lem:kappa_core}\eqref{cond:kappa_inverse}, the inverse relation~$\kappa^T$ is also a linear
\ZM\ (this time over the right anchor $\rhoR$) inducing the identity on the cores.
It clearly satisfies (we use the properties \eqref{cond:kappa_biliner} and \eqref{cond:core} of $\kappa$ to transform equality \eqref{eqn:kappa_from_bracket}) the condition
\begin{gather*} \kappa_b^T\left[\T a(\rhoR(b))\right]=\T b(\rhoL(a))\minus [a,b],\end{gather*}
i.e., passing from $\kappa$ to $\kappa^T$ corresponds to changing the bracket $[a,b]$ to $[a,b]^T:=-[b,a]$ (note that this operation interchanges left and right anchors). Obviously $\kappa=\kappa^T$ if the bracket is skew-symmetric.
\end{rem}

\begin{ex}[the tangent algebroid] \label{ex:tang_alg}The tangent bundle $\tau_M\colon \T M \ra M$ of a manifold $M$ carries a canonical Lie algebroid structure. The left and right anchors are the identity maps $\rhoL=\rhoR=\id_{\T M}$ and the bracket $[\cdot,\cdot]_{\T M}$ is the commutator of vector fields.

In this case the corresponding \ZM\ is, in fact, an isomorphism of DVB $\kappa_M\colon \T\tau_M\ra \tau_{\T M}$. If $(x^a)$ are local coordinates on $M$, $(x^a, \dot x^b)$ induced coordinates on $\T M$ and $(x^a, \dot x^b, \delta x^c,\delta\dot x^d)$ induced coordinates on $\T\T M$, $\kappa_M$ is expressed as
\begin{gather*} \kappa_M\colon \ \big(x^a, \dot x^b, \delta x^c,\delta\dot x^d\big)\longmapsto \big(x^a, \delta x^b, \dot x^c,\delta\dot x^d\big).\end{gather*}
Note that $\kappa_M$ interchanges the two VB structures on $\T \T M$, $\T\tau_M$ and $\tau_{\T M}$.
\end{ex}

\begin{ex}[the tangent lift of a (Lie) algebroid]\label{ex:tang_lift} If $(\sigma\colon E\ra M, \rhoL,\rhoR,[\cdot,\cdot])$ is a (Lie) algebroid structure, then $\T\sigma\colon \T E\ra \T M$ also carries a canonical algebroid structure $(\T\sigma, \dd_\T\rhoL$, $\dd_\T\rhoR, [\cdot,\cdot]_{\dd_\T})$ called the \emph{tangent lift of the algebroid} structure on $\sigma$. This structure is determined by conditions{\samepage
\begin{gather*} \dd_\T\rhoL=\kappa_M\circ \T\rhoL,\qquad \dd_\T\rhoR=\kappa_M\circ \T\rhoR\qquad\text{and}\qquad [\T a,\T b]_{\dd_\T}=\T[a,b] \end{gather*}
for any sections $a,b\in\Sec_M(E)$.}

In this case (see \cite{JG_PU_Algebroids_1999}) the \ZM\ $\dd_\T\kappa$, corresponding to the considered algebroid structure on $\T\sigma$, is the tangent lift of the \ZM\ $\kappa$ corresponding to the initial algebroid structure on $\sigma$ composed with two canonical isomorphisms $\kappa_E$, i.e.,
\begin{gather*}\dd_\T\kappa=\kappa_E\circ\T \kappa \circ\kappa_E.\end{gather*}
 This construction has a natural generalization to higher tangent lifts $\TT{k}\sigma\colon \TT{k} E\ra\TT{k} M$. It will be discussed in detail in the second paragraph of Section~\ref{ssec:higher_lie}.
\end{ex}

{\bf From \ZM s to algebroids.} In fact, relation $\kappa$ introduced in Lemma~\ref{lem:kappa_from_sigma} completely characterizes the (Lie) algebroid structure on $\sigma$.

\begin{prop}[from \ZM s to (Lie) algebroids]\label{prop:sigma_from_kappa}Let $\sigma\colon E\ra M$ be a vector bundle. A~linear \Zakrzewski\ $\kappa$ from $\T\sigma\colon \T E\ra \T M$ to the tangent bundle $\tau_E\colon \T E\ra E$, which induces the identity on the core bundles, i.e., $\kappa\cap(C\times C)=\graf(\id_C)$, provides $\sigma$ with the unique general algebroid structure.

The left anchor $\rhoL$ $($resp., the right anchor $\rhoR)$ is given by the base map of $\kappa$ $($resp., the base map of $\kappa^T)$ and the bracket is given by
\begin{gather}\label{eqn:bracket_from_kappa}
\V_a[a,b]:=\kappa_a\left[\T b(\rhoL(a))\right]-\T a(\rhoR(b)),
\end{gather}
for any sections $a,b\in\Sec_M(E)$.
\end{prop}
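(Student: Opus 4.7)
My plan proceeds in four steps: extract the anchors from $\kappa$, construct the bracket pointwise as a vertical correction, verify bilinearity and the Leibniz identity~(\ref{eqn:bracket}), and then deduce uniqueness from the construction of Lemma~\ref{lem:kappa_from_sigma}.

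\textbf{Step 1 (anchors and the basic vertical identity).} I would apply Lemma~\ref{lem:kappa_core} to the hypotheses on $\kappa$ and read off $\rhoL:=\und{\kappa}$ and $\rhoR$, the latter coming from $(\tau_E\times\T\sigma)(\kappa)=\graf(\rhoR)$; the lemma guarantees both are VB morphisms over $\id_M$. Next I fix $a\in E$ with $x=\sigma(a)$ and a section $b\in\Sec_M(E)$. Both $\kappa_a[\T b(\rhoL(a))]$ and $\T a(\rhoR(b))$ lie in $\T_a E$, and both project to $\rhoR(b)\in\T_x M$ under $\T\sigma$---the first by the defining property of $\rhoR$ as the secondary projection of $\kappa$, the second because $\T a$ is a section of $\T\sigma$. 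Consequently their difference is $\T\sigma$-vertical at $a$, hence of the form $\V_a(c)$ for a unique $c\in E_x$, and I set $[a,b](x):=c$. This is exactly~(\ref{eqn:bracket_from_kappa}).

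\textbf{Step 2 (smoothness, bilinearity, Leibniz).} Smoothness of $[a,b]$ in $x$ is immediate from smoothness of $\kappa$, $\rhoL$, $\rhoR$, $a$, $b$. $\mathbb{R}$-bilinearity follows from the linearity of $\kappa$ with respect to both VB structures $\T\sigma$ and $\tau_E$, the $\mathbb{R}$-linearity of $\T a$, $\T b$, $\rhoL$, $\rhoR$, and additivity of the core action $\plus$. The heart of the proof is the Leibniz identity~(\ref{eqn:bracket}): I would expand $\T(fa)$ and $\T(gb)$ via~(\ref{eqn:T_fa}), use $\rhoL(fa)=f\rhoL(a)$ and $\rhoR(gb)=g\rhoR(b)$, and apply $\kappa_{fa}$. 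Since $\kappa$ is a linear \ZM\ it distributes over the $\T\sigma$-scalar multiplication and commutes with the core action (Lemma~\ref{lem:kappa_core}\eqref{cond:core_action}), while the identity-on-cores hypothesis lets the ``core correction'' terms from~(\ref{eqn:T_fa}) pass through $\kappa$ unchanged. Collecting pieces and subtracting the analogous expansion of $\T(fa)(g\rhoR(b))$ produces precisely $f\rhoL(a)(g)\cdot b-g\rhoR(b)(f)\cdot a+fg\cdot[a,b]$ on the vertical side. A shorter route is to work in the coordinates of Remark~\ref{rem:local_kappa_a}: substituting $fy^i$, $gy^j$ for $y^i,y^j$ in~(\ref{eqn:kappa_local}) and reading off $\dot{\und{y}}^k$ gives~(\ref{eqn:bracket}) directly with anchors $Q^b_i$, $\wt{Q}^b_i$ and bracket structure constants $Q^k_{ij}$.

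\textbf{Step 3 (uniqueness).} Suppose $(\rhoL',\rhoR',[\cdot,\cdot]')$ is another general algebroid structure on $\sigma$ whose \ZM\ from Lemma~\ref{lem:kappa_from_sigma} coincides with the given $\kappa$. Then $\rhoL'$ must be the base map of $\kappa$ and $\rhoR'$ the base map of $\kappa^T$, so both coincide with our $\rhoL$, $\rhoR$; and $[\cdot,\cdot]'$ is forced by~(\ref{eqn:kappa_from_bracket}), which is equivalent to our defining relation~(\ref{eqn:bracket_from_kappa}). The expected main obstacle is the Leibniz verification in Step~2, where the two compatible VB structures on $\T E$ and the core action must be manipulated carefully; the local-coordinate approach via~(\ref{eqn:kappa_local}) is the cleanest way to manage the bookkeeping.
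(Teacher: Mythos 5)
Your proposal is correct and follows essentially the same route as the paper's proof: anchors via Lemma~\ref{lem:kappa_core}, well-definedness of the bracket from the verticality of the difference in~\eqref{eqn:bracket_from_kappa} (equal $\tau_E$- and $\T\sigma$-projections), and the Leibniz rule~\eqref{eqn:bracket} checked by studying the rescalings $a\mapsto f\cdot a$, $b\mapsto g\cdot b$ using~\eqref{eqn:T_fa}, the bi-linearity of $\kappa$ and its compatibility with the core action -- exactly the manipulations the paper delegates to the rescaling formulas~\eqref{eqn:rescalling} from the proof of Lemma~\ref{lem:kappa_from_sigma}. Your extra remarks (the coordinate check via~\eqref{eqn:kappa_local} and the explicit uniqueness step) are fine but not a different method.
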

 The proof is given in Appendix~\ref{sec:app}.

\begin{rem}[alternative definition of a general algebroid] By the results of Proposition~\ref{prop:sigma_from_kappa} and Lemma~\ref{lem:kappa_from_sigma} we can equivalently define a general algebroid structure $(\sigma,\rhoL,\rhoR,[\cdot,\cdot])$ on $\sigma\colon E\ra M$ as a pair $(\sigma,\kappa)$, where $\kappa\colon \T\sigma\relto\tau_E$ is a linear \ZM\ satisfying natural properties. In what follows we shall often refer to this description.
\end{rem}

\begin{rem}[the dual of $\kappa$]\label{rem:kappa_dual} The fact that $\kappa$ induces the identity on the core implies that its dual (which is a proper vector bundle morphism, and even a DVB morphism by the linearity of~$\kappa$)
\begin{gather*}\xymatrix{
\T^\ast E \ar[d]_{\tau_E^*} \ar[r]^{\kappa^\ast} & \T E^\ast\ar[d]^{\T \sigma^\ast}\\
E \ar[r]^{\rhoL} & \T M
}
\end{gather*}
covers the identity $\id_{E^\ast}$ under the projections $\T^\ast E\ra E^\ast$ and $\T E^\ast\ra E^\ast$ (the core of a DVB becomes a side bundle under dualization \cite{Konieczna_Urbanski_1999}). In other words, $\kappa^\ast\colon \T E^\ast\ra \T^\ast E\simeq \T^\ast E^\ast$ corresponds to a linear bi-vector on~$E^\ast$. This is an original point of view of~\cite{JG_PU_Lie_alg_P-N_str_1997}.
\end{rem}

\begin{rem}[a local form of $\kappa$] By Remark~\ref{rem:local_kappa_a}, the local form of a linear \ZM\ corresponding to a given general algebroid structure is given by formulas \eqref{eqn:kappa_local} with $\alpha^k_i(x) = \delta^k_i$. Within this description $\rhoL\colon E\ra \T M$, the base map of $\kappa$, is given by $\rhoL\colon E\ni(\und{x}^a,\und{y}^i)\longmapsto(x^a=\und{x}^a$, $\dot x^b=Q^b_i(x)\und{y}^i)\in\T M$. Similarly, the right anchor reads as $\rhoR\colon E\ni(x^a,y^i)\longmapsto(\und{x}^a=x^a$, $\dot{\und{x}}^b=\wt{Q}^b_i(x)y^i)\in\T M$.

Take now local sections $a(x)\sim(x^a,y^j=a^i(x))$, and $b(x)\sim(x^a,y^j=b^i(x))$ of $E$. We can use formula \eqref{eqn:kappa_local} together with \eqref{eqn:bracket_from_kappa} to calculate a local expression for an algebroid bracket of these two sections. Simple calculations (which we omit here) lead to
\begin{gather*} [a,b](x)\sim\left(x^a,y^k=\frac{\pa b^k}{\pa x^a}(x)Q^a_i(x)a^i(x)-\frac{\pa a^k}{\pa x^a}(x)\wt{Q}^a_j(x)b^j(x)+Q^k_{ij}a^i(x)b^j(x)\right).\end{gather*}
\end{rem}

{\bf (Lie) algebroid morphisms as $\boldsymbol{\catZM}$-morphisms.} Next we shall show that within the interpretation of (Lie) algebroids as \Zakrzewski s, the notion of an algebroid morphism corresponds to a $\catZM$-morphism between appropriate relations.

Intuitively, a morphism between two (Lie) algebroid structures on $\sigma\colon E\ra M$ and on \mbox{$\sigma'\colon E'\ra M'$} should be a vector bundle map $\phi\colon E\ra E'$ over $\und{\phi}\colon M\ra M'$ which intertwines the anchors and the algebroid bracket on sections. This intuition, however, faces immediate problems as, in general, a VB morphism $\phi$ does not map sections of $\sigma$ to sections of~$\sigma'$. This problem is solved by passing to the pull-back bundles.

\begin{df}[morphism of (Lie) algebroids, \cite{Mackenzie_lie_2005}]\label{def:algebroid_morphism}
Let $(\sigma\colon E \ra M,[\cdot,\cdot],\rhoL,\rhoR)$ and\linebreak $(\sigma'\colon E'\ra M',[\cdot,\cdot]',\rhoL',\rhoR')$ be (Lie) algebroids. A VB morphism $\phi\colon E\ra E'$ over $\und{\phi}\colon M\ra M'$ is a~\emph{morphism} between the algebroid structures on $\sigma$ and $\sigma'$ if and only if the VB morphisms~$\phi$ and~$\T\und{\phi}$ relate the left and the right anchors of $\sigma$ and $\sigma'$, i.e.,
\begin{gather}\label{eqn:compatible_anchors}\begin{split} &
\xymatrix{E \ar[r]^{\phi}\ar[d]^{\rhoL}& E'\ar[d]^{\rhoL'}\\
\T M \ar[r]^{\T\und{\phi}}& \T M'
}\end{split} \qquad\text{and}\qquad \begin{split}&
\xymatrix{E \ar[r]^{\phi}\ar[d]^{\rhoR}& E'\ar[d]^{\rhoR'}\\
\T M \ar[r]^{\T\und{\phi}}& \T M',}\end{split}
\end{gather}
and if for every sections $a,b\in\Sec_M(E)$ such that their push-forwards $\phi_\ast a,\phi_\ast b\in \Sec_{M}(\und{\phi}^\ast E')$ can be (locally) presented as finite sums $\phi_\ast a=\sum_i f_i\cdot \und{\phi}^\ast a_i$ and $\phi_\ast b=\sum_j g_j\cdot \und{\phi}^\ast b_j$ for some functions $f_i, g_j\in \Cf(M)$ and some sections $a_i,b_j\in \Sec_{M'}(E')$ we have
\begin{gather}\label{eqn:algebroid_morphism}
\phi_\ast[a,b]=\sum_{i,j}\left(\rhoL(a)(g_j)\cdot\und{\phi}^\ast b_j-\rhoR(b)(f_i)\cdot\und{\phi}^\ast a_i+f_ig_j\cdot\und{\phi}^\ast[a_i,b_j]'\right).
\end{gather}
\end{df}
\begin{rem}[on notion of an algebroid morphism] For any $x\in M$ there is a neighbourhood~$U$ of $x$ in~$M$ and a neighbourhood~$U'$ of $\und{\phi}(x)$ in $M'$ such that for any section $a$ of $\sigma$ we have $\phi_\ast a = \sum_i f_i \und{\phi}^\ast e_i'$ where $(e_i')$ is a basis of sections of $\sigma'$ over $U'$ and $f_i$ are some functions on~$U$. This explains a local character of the above definition. There are other equivalent and more elegant formulation of the definition of an algebroid morphism -- we discus some of them in Proposition~\ref{prop:alg_morphism_eqv} (see also \cite[Definition~3]{JG_PU_Algebroids_1999}, where general algebroids are considered as a ~special type of Leibniz structures on the dual bundle and~\cite{JG_mod_class_skew_alg_rel_2012} for a generalization of this notion to an algebroid relation).
\end{rem}

It may seem unclear if condition \eqref{eqn:algebroid_morphism} is well-posed, i.e., if it does not depend on the presentation of $\phi_\ast a$ and $\phi_\ast b$ as finite sums of sections with $\Cf(M)$ coefficients. To prove that this is the case, we check that the right-hand side of \eqref{eqn:algebroid_morphism} is tensorial with respect to $a_i$ and $b_j$ (for this it is crucial that condition \eqref{eqn:compatible_anchors} holds). The essential calculations (for a simpler case of a skew algebroid) can be found in the classical book \cite{Mackenzie_lie_2005}.

It turns out that the above definition has a very elegant (and much simpler) interpretation in the language of \ZM s naturally related with the (Lie) algebroid structure.
\begin{prop}[(Lie) algebroid morphisms as $\catZM$-morphisms]\label{prop:alg_morphism}
Let $(\sigma\colon E\!\ra\! M,[\cdot,\cdot],\rhoL,\rhoR)$ and $(\sigma'\colon E' \ra M',[\cdot,\cdot]',\rhoL',\rhoR')$ be $($Lie$)$ algebroids with the corresponding \Zakrzewski s {$\kappa\colon \T\sigma\relto \tau_{E}$} and $\kappa'\colon \T\sigma'\relto \tau_{E'}$, respectively. A VB map $\phi\colon E\ra E'$ over $\und{\phi}\colon M\ra M'$ is a~morphism between the algebroid structures on~$\sigma$ and $\sigma'$ if and only if $(\T\phi,\T\phi)$ is a $\catZM$-morphism from $\kappa$ to $\kappa'$.
\end{prop}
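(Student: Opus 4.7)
The plan is to invoke Proposition~\ref{p:ZM-morphism}, which rephrases the $\catZM$-morphism condition for $(\T\phi,\T\phi)\colon \kappa \mZM \kappa'$ as the single inclusion $(\T\phi\times\T\phi)(\kappa) \subset \kappa'$ together with the commutativity of base maps in the sense of Definition~\ref{def:zm_morphism}\eqref{item:def_morphism_ZM_1}. The tangent lift $\T\phi\colon \T E \to \T E'$ is a double vector bundle morphism: it covers $\T\und{\phi}$ with respect to the $\T\sigma$-structures and covers $\phi$ itself with respect to the $\tau_E$-structures, and its restriction to the core $C\simeq E$ is just $\phi$; in particular $\T\phi$ commutes with the core action~$\plus$.

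First I would extract the anchor part. By Lemma~\ref{lem:kappa_from_sigma} the base map of $\kappa$ is $\rhoL$, while by Lemma~\ref{lem:kappa_core}\eqref{cond:rho_r} its $(\tau_E\times\T\sigma)$-projection is the graph of $\rhoR$; the analogous statements hold for $\kappa'$ with primed anchors. Commutativity of base maps translates verbatim to $\T\und{\phi}\circ \rhoL = \rhoL'\circ \phi$, and projecting the inclusion $(\T\phi\times\T\phi)(\kappa)\subset \kappa'$ onto the $(\tau_{E'}\times\T\sigma')$-side forces $\T\und{\phi}\circ \rhoR = \rhoR'\circ \phi$. Together these are exactly the two squares of~\eqref{eqn:compatible_anchors}.

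Next I would handle the bracket. By the generating identity~\eqref{eqn:kappa_from_bracket}, for any sections $a,b \in \Sec_M(E)$ and any $m \in M$ the pair
\begin{gather*}
\big(\T b(\rhoL(a(m))),\ \T a(\rhoR(b(m)))\plus [a,b](m)\big)
\end{gather*}
lies in $\kappa$. Applying $\T\phi\times\T\phi$, using the chain-rule identity $\T\phi\circ \T b = \T(\phi\circ b)$ and the compatibility of $\T\phi$ with the core action, the condition $(\T\phi,\T\phi)\colon \kappa \mZM \kappa'$ becomes the requirement that for all such $a,b,m$ the pair
\begin{gather*}
\big(\T(\phi\circ b)(\rhoL(a(m))),\ \T(\phi\circ a)(\rhoR(b(m)))\plus \phi([a,b](m))\big)
\end{gather*}
must belong to $\kappa'$. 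Since $\phi\circ a$ and $\phi\circ b$ are only sections \emph{along} $\und{\phi}$, I would choose, as in Definition~\ref{def:algebroid_morphism}, a local decomposition $\phi_\ast a = \sum_i f_i\cdot \und{\phi}^\ast a_i$ and $\phi_\ast b = \sum_j g_j\cdot \und{\phi}^\ast b_j$ with $a_i,b_j \in \Sec_{M'}(E')$ and $f_i,g_j \in \Cf(M)$, expand both entries of the pair using identity~\eqref{eqn:T_fa} together with the bilinearity of $\kappa'$, and then substitute~\eqref{eqn:kappa_from_bracket} for $\kappa'$ on each pair $(\und{\phi}^\ast a_i,\und{\phi}^\ast b_j)$ at $\und{\phi}(m)$.

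The expected main obstacle is the bookkeeping of the core corrections. Each $\T(f_i\cdot \und{\phi}^\ast a_i)(v)$ picks up an additive piece $v(f_i)\cdot a_i$, and the expansion cleanly separates into a ``tangent'' part, which recombines into $\sum_{i,j} f_i g_j\cdot \und{\phi}^\ast[a_i,b_j]'$, and a ``core'' part carrying exactly the $\rhoL(a)(g_j)\cdot \und{\phi}^\ast b_j$ and $\rhoR(b)(f_i)\cdot \und{\phi}^\ast a_i$ summands, where the anchor compatibility from the previous step is used to rewrite $\T\und{\phi}\circ \rhoL(a) = \rhoL'\circ\phi(a)$ and similarly for $\rhoR$. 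Comparing the two elements of $\kappa'$ along the core of $\T E'$ yields precisely the relation~\eqref{eqn:algebroid_morphism}. All transformations are reversible, so the two conditions are genuinely equivalent; moreover the well-definedness of~\eqref{eqn:algebroid_morphism} under a change of local decomposition follows automatically once the anchor compatibility is in place, so the anchor and bracket arguments dovetail as expected.
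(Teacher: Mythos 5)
Your plan is correct and follows essentially the same route as the paper's proof: one first extracts both anchor squares of \eqref{eqn:compatible_anchors} (the paper gets the right-anchor one by viewing $(\T\phi,\T\phi)$ as a $\catZM$-morphism between the transposed relations $\kappa^T$, $\kappa'^T$, which is the same content as your projection to the $\tau_{E'}\times\T\sigma'$ legs via Lemma~\ref{lem:kappa_core}), and then expands $\T\phi\big[\T b(\rhoL(a))\big]$ and $\T\phi\big[\T a(\rhoR(b))\big]$ using the decompositions $\phi_\ast a=\sum_i f_i\cdot\und{\phi}^\ast a_i$, $\phi_\ast b=\sum_j g_j\cdot\und{\phi}^\ast b_j$ and identity \eqref{eqn:T_fa}, comparing core components against \eqref{eqn:bracket_from_kappa}--\eqref{eqn:bracket_from_kappa} to recover exactly \eqref{eqn:algebroid_morphism}. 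The one point you gloss -- that verifying the $\catZM$-condition on the generating pairs of \eqref{eqn:kappa_from_bracket} suffices, because general elements of $\kappa$ are core translates and both $\T\phi$ and $\kappa'$ respect the core action -- is glossed to the same degree in the paper's own argument, so nothing essential is missing.
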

The proof is given in Appendix~\ref{sec:app}.

{\bf Subalgebroids and algebroidal relations.} Let us now describe the notion of a subalgebroid in terms of \ZM s. Recall the following definition.

\begin{df}[subalgebroid, \cite{JG_mod_class_skew_alg_rel_2012, Mackenzie_lie_2005}]\label{def:sublagebroid}
Let $(\sigma\colon E\ra M, \rhoL,\rhoR, [\cdot, \cdot])$ be a (Lie) algebroid. A~vector subbundle $\sigma'\colon E'\ra M'$ of $\sigma$ is called a \emph{subalgebroid} of $\sigma$ if the following two conditions are satisfied
\begin{enumerate}[(i)]\itemsep=0pt
 \item \label{df:sub_algebroid_1} The restrictions of the anchors $\rhoL|_{E'}$ and $\rhoR|_{E'}$ map $E'\subset E$ to $\T M'\subset \T M$.
 \item \label{df:sub_algebroid_2} If sections $a,b\in\Sec_M(E)$ are such that $a|_{M'},b|_{M'} \in \Sec_{M'}(E')$, then $[a, b]|_{M'} \in \Sec_{M'}(E')$.
\end{enumerate}
\end{df}

The first of the above conditions assures us that the bracket operation $[\cdot, \cdot]_{E'}$ defined on sections of $\sigma'$ by $[\wt{a}, \wt{b}]_{E'}:= [a, b]|_{M'}$ does not depend on the choice of the extensions $a,b\in \Sec_M(E)$ of sections $\wt{a},\wt{b}\in\Sec_{M'}(E')$. The second condition guarantees that the section space $\Sec_{M'}(E')$ is closed with respect to this bracket.
Clearly the subbundle $\sigma'$ carries a general algebroid structure inherited from $\sigma$.

In the face of the relationship between (Lie) algebroids and \ZM s, the \ZM\ \mbox{$\kappa\colon \T\sigma\!\relto \!\tau_E$} corresponding to the algebroid structure on $\sigma$ should induce some \ZM\ $\kappa'\colon \T\sigma'\relto \tau_{E'}$ corresponding to the structure of a subalgebroid on $\sigma'$ described above. We claim that such a~$\kappa'$ is a~\emph{fine restriction} of $\kappa$ in the sense of the definition below.

\begin{df}[fine restriction] Let, for $i=1,2$, $\sigma_i'\colon E_i'\ra M_i'$ be a vector subbundle of $\sigma_i\colon E_i\ra M_i$, and let $r\colon \sigma_1\relto \sigma_2$ be a \ZM\ over a base map $\und{r}\colon M_2\ra M_1$. We say that~$r$ \emph{restricts fine to $\sigma_1'\times \sigma_2'$} if $\und{r}(M_2')\subset M_1'$ and if for any $X\in E_1$ and $Y\in E_2$ that are $r$-related and such that $X\in E_1'$ while $\sigma_2(Y)\in M_2'$ we have $Y\in E_2'$. If this is the case, relation $r'$, defined as the intersection of $r$ with $E_1'\times E_2'$, defines a \ZM\ from $\sigma_1'$ to $\sigma_2'$.
\end{df}

The equivalence of the classical notion of a subalgebroid with the notion of a fine restriction of the corresponding \ZM\ can be easily proved.

\begin{prop}[on subalgebroids]\label{p:subalgebroid} Let $(\sigma\colon\! E\!\ra\! M, \kappa)$ be a $($Lie$)$ algebroid and let \mbox{$\sigma'\colon\! E'\!\ra\! M'$} be a vector subbundle of $\sigma$. Then the following conditions are equivalent:
\begin{enumerate}[$(i)$]\itemsep=0pt
	\item\label{cond:subalg_1} The subbundle $\sigma'$ is a subalgebroid of $\sigma$.
	\item\label{cond:subalg_2} The inclusion map $\iota\colon \sigma'\hookrightarrow\sigma$ is an algebroid morphism.
	\item\label{cond:subalg_3} The \ZM\ $\kappa$ restricts fine to $\T\sigma'\times\tau_{E'}\subset \T\sigma\times\tau_{E}$.
\end{enumerate}
\end{prop}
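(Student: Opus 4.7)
The plan is to establish the cycle $(i)\Rightarrow (iii)\Rightarrow (ii)\Rightarrow (i)$, combining the dictionary between (Lie) algebroids and linear \Zakrzewski s (Lemma~\ref{lem:kappa_from_sigma}, Proposition~\ref{prop:sigma_from_kappa}) with the identification of algebroid morphisms as $\catZM$-morphisms (Propositions~\ref{prop:alg_morphism} and~\ref{p:ZM-morphism}). The key observation linking $(ii)$ and $(iii)$ is that, because each $\kappa_a$ is a genuine function on fibers, the inclusion $\kappa'\subset \kappa$ (coming from a $\catZM$-morphism) automatically sharpens to the equality $\kappa' = \kappa \cap (\T E'\times \T E')$, which is exactly fine restriction.

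To see $(ii)\Leftrightarrow (iii)$: if $\sigma'$ carries an algebroid structure for which $\iota$ is a morphism, with associated \Zakrzewski\ $\kappa'$, Propositions~\ref{prop:alg_morphism} and~\ref{p:ZM-morphism} together give $\kappa'\subset \kappa$ as subsets of $\T E\times \T E$, while the base-map compatibility condition in Definition~\ref{def:zm_morphism} forces $\rhoL' = \rhoL|_{E'}$. Functional uniqueness of each $\kappa_a$ then upgrades the inclusion to $\kappa' = \kappa \cap (\T E'\times \T E')$, i.e., fine restriction. Conversely, fine restriction makes $\kappa' := \kappa \cap (\T E'\times \T E')$ a linear \Zakrzewski\ from $\T\sigma'$ to $\tau_{E'}$ that inherits the identity on cores from $\kappa$ (the core of $(\T E',\T\sigma',\tau_{E'})$ sits inside that of $(\T E,\T\sigma,\tau_E)$); Proposition~\ref{prop:sigma_from_kappa} then supplies an algebroid structure on $\sigma'$, and $\kappa'\subset \kappa$ yields $(ii)$ via Proposition~\ref{prop:alg_morphism}.

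For $(i)\Rightarrow (iii)$ I would apply formula~\eqref{eqn:kappa_from_bracket} directly. Fix $a\in E'$ and $X\in \T E'$ with $\T\sigma(X) = \rhoL(a)\in \T M'$ (using~\eqref{df:sub_algebroid_1}), and choose locally sections $\tilde a,b\in \Sec_M(E)$ with $\tilde a|_{M'},b|_{M'}\in \Sec_{M'}(E')$, $\tilde a(\sigma(a)) = a$, and $\T b(\rhoL(a)) = X$; then~\eqref{eqn:kappa_from_bracket} writes $\kappa_a(X)$ as the sum of $\T\tilde a(\rhoR(b(\sigma(a))))$ and a core shift by $[\tilde a,b](\sigma(a))$. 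Both summands lie in $\T E'$: the tangent lift because $\rhoR(b(\sigma(a)))\in \T M'$ and $\tilde a|_{M'}$ takes values in $E'$, and the core shift because $[\tilde a,b](\sigma(a))\in E'$ by~\eqref{df:sub_algebroid_2}. For the reverse direction $(iii)\Rightarrow (i)$, the structure on $\sigma'$ furnished by $\kappa'$ has left anchor $\rhoL|_{E'}$ landing in $\T M'$ by the definition of fine restriction, and the same follows for $\rhoR|_{E'}$ using fiberwise surjectivity of $\T\sigma'$; this is~\eqref{df:sub_algebroid_1}. Condition~\eqref{df:sub_algebroid_2} then follows from~\eqref{eqn:bracket_from_kappa} applied pointwise on $M'$, since fine restriction keeps both summands in $\T E'$.

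The main technical obstacle will be the local construction of the section extensions in the $(i)\Rightarrow (iii)$ step, namely choosing $\tilde a$ and $b$ with prescribed first-jet data at a single point of $M'$ while simultaneously forcing their restrictions to $M'$ to lie in $\Sec_{M'}(E')$. This is a routine construction in a trivialization of $\sigma$ adapted to $\sigma'$, but it must be set up carefully. Once the extensions are available, every remaining step reduces either to the bracket formula, to the definition of fine restriction, or to the uniqueness and linearity built into the notion of a \Zakrzewski.
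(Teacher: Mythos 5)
Your (ii)$\Leftrightarrow$(iii) argument is essentially the ``elementary diagram-chasing'' the paper leaves implicit (the paper shows (iii) is equivalent to $(\T\iota,\T\iota)\colon\kappa'\mZM\kappa$ being a $\catZM$-morphism and then invokes Proposition~\ref{prop:alg_morphism}), and your ``functional uniqueness'' upgrade from the inclusion $\kappa'\subset\kappa$ to fine restriction is correct. Where you genuinely diverge is (i)$\Leftrightarrow$(iii): the paper settles (i)$\Leftrightarrow$(ii) by citing Mackenzie, whereas you derive (i)$\Leftrightarrow$(iii) directly from the dictionary \eqref{eqn:kappa_from_bracket}, \eqref{eqn:bracket_from_kappa}; that is a more self-contained route, and your (iii)$\Rightarrow$(i) half is fine (including the use of fibrewise surjectivity of $\T\sigma'$ to obtain $\rhoR(E')\subset\T M'$).

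There is, however, a concrete gap in (i)$\Rightarrow$(iii). You insist on presenting the given element as $X=\T b(\rhoL(a))$ for a section $b$ with $b|_{M'}\in\Sec_{M'}(E')$. This is impossible whenever $\rhoL(a)=0$ and $X$ is a nonzero vertical vector: then $\T b(\rhoL(a))$ is the zero tangent vector at $b(x)$ for \emph{every} choice of $b$, so no section realizes $X$. This is not a marginal case: fine restriction must be verified for all $\kappa$-related pairs $(X,Y)$ with $X\in\T E'$ and $\tau_E(Y)\in E'$, and for subalgebroids on which the anchor vanishes (isotropy directions, bundles of Lie algebras with $\rhoL|_{E'}=0$) every relevant $X$ is vertical, so your argument as written proves nothing there. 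The paper flags exactly this pitfall in its proof of Lemma~\ref{lem:kappa_from_sigma}. The repair is the one you already half-use on the output side: decompose $X=\T b(\rhoL(a))\plus c$ with $c\in E'_x$ (the difference of two elements of $\T_{b_0}E'$ with the same $\T\sigma$-image is vertical and tangent to the $E'$-fibre) and apply the extended formula \eqref{eqn:kappa_from_bracket_core}, $\kappa_a\left[\T b(\rhoL(a))\plus c\right]=\T\tilde a(\rhoR(b))\plus[\tilde a,b]\plus c$, all three terms of which lie in $\T E'$ by Definition~\ref{def:sublagebroid}; equivalently, invoke the core-action property of Lemma~\ref{lem:kappa_core}\eqref{cond:core_action}. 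A smaller soft spot: in (iii)$\Rightarrow$(ii) you feed $\kappa'=\kappa\cap(\T E'\times\T E')$ into Proposition~\ref{prop:sigma_from_kappa}, which requires $\kappa'$ to be a \emph{linear} \ZM, i.e., also a vector subbundle for the second structure $\tau_{E'}\times\T\sigma'$; you assert this but should justify it (for instance via the fine restriction of $\kappa^T$, using $\rhoR(E')\subset\T M'$ and Lemma~\ref{lem:kappa_core}\eqref{cond:kappa_inverse}).
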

\begin{proof}It involves some elementary diagram-chasing to check that \eqref{cond:subalg_3} is equivalent to $(\T\iota,\T\iota)$ being a $\catZM$-morphism between $\kappa'$ and $\kappa$. Due to Proposition~\ref{prop:alg_morphism} the latter condition is equivalent to~\eqref{cond:subalg_2}. Finally the equivalence of conditions \eqref{cond:subalg_1} and \eqref{cond:subalg_2} is a standard fact in the theory of Lie algebroids (see \cite[Chapter~4]{Mackenzie_lie_2005}).
\end{proof}

Following Grabowski~\cite{JG_mod_class_skew_alg_rel_2012}, we recall the concept of an algebroidal relation, which is a generalization of a morphism of algebroids. It is closely related to the notion of a subalgebroid.

\begin{df}[algebroidal relation]\label{df:alg_relation} Let $(\sigma_1\colon E_1\ra M_1,\kappa_1)$ and $(\sigma_2\colon E_2\ra M_2,\kappa_2)$ be (Lie) algebroids. A relation $r\subset E_1 \times E_2$ is called an \emph{algebroidal relation} if the graph of~$r$ is a~subalgebroid of the product algebroid $(\sigma_1\times \sigma_2,\kappa_1\times\kappa_2)$.
\end{df}
If $r$ is a vector bundle morphism, we recover the notion of an algebroid morphism (see Proposition~\ref{prop:alg_morphism_eqv}). Algebroidal relations have an elegant characterization in terms of the anchors and the algebroid brackets.

\begin{prop}[on algebroidal relations]\label{prop:algebroidal_relation} Let $(\sigma_1\colon E_1\ra M_1,\rho_{1L}, \rho_{1R},[\cdot,\cdot]_1)\simeq(\sigma_1,\kappa_1)$ and $(\sigma_2\colon E_2\ra M_2,\rho_{2L}, \rho_{2R},[\cdot,\cdot]_2)\simeq(\sigma_2,\kappa_2)$ be (Lie) algebroids and let $r\colon \sigma_1\relto \sigma_2$ be a \ZM\ covering $\und{r}\colon M_2\ra M_1$.
The following are equivalent:
\begin{enumerate}[$(i)$]\itemsep=0pt
\item\label{cond:alg_rel_1} $r$ is an algebroidal relation,
\item\label{cond:alg_rel_2}\begin{enumerate}[$(a)$]\itemsep=0pt
\item\label{sub_cond:a} For any section $s_1\in\Sec_{M_1}(E_1)$ and $I=L,R$, the vector fields $\rho_{2I} (\hat{r}(s_1))$ and $\rho_{1I}(s_1)$ are $\und{r}$-related and
\item\label{sub_cond:b} for any sections $s_1, s_1'\in \Sec_{M_1}(E_1)$ we have \begin{gather*} \hat{r}([s_1, s_1']_1) = [\hat{r}(s_1), \hat{r}(s_1')]_{2}.\end{gather*}
\end{enumerate}
\end{enumerate}
\end{prop}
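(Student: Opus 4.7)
The plan is to verify, by direct inspection of Definition~\ref{def:sublagebroid}, that $r$ viewed as a vector subbundle of the product algebroid $\sigma_1\times\sigma_2\colon E_1\times E_2\to M_1\times M_2$ over $\graf(\und{r})$ is a subalgebroid if and only if (a) and (b) hold. Recall that the product anchors are $\rho^{\mathrm{prod}}_I=\rho_{1I}\times\rho_{2I}$, that its bracket satisfies $[\mathrm{pr}_1^\ast s_1,\mathrm{pr}_2^\ast s_2]^{\mathrm{prod}}=0$ for $s_i\in\Sec_{M_i}(E_i)$, and that $\T_{(\und{r}(y),y)}(\graf(\und{r}))=\{(\T\und{r}(w),w)\colon w\in\T_yM_2\}$.

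For $(i)\Rightarrow(ii)$: the first subalgebroid axiom applied to an arbitrary $(X_1,X_2)\in r$ reads $(\rho_{1I}(X_1),\rho_{2I}(X_2))\in\T(\graf(\und{r}))$, i.e., $\rho_{1I}(X_1)$ and $\rho_{2I}(X_2)$ are $\und{r}$-related; specializing to $X_1=s_1(\und{r}(y))$ and $X_2=\hat{r}(s_1)(y)$ gives (a). For (b), the sections $\tilde{s}:=\mathrm{pr}_1^\ast s_1+\mathrm{pr}_2^\ast\hat{r}(s_1)$ and $\tilde{s}':=\mathrm{pr}_1^\ast s_1'+\mathrm{pr}_2^\ast\hat{r}(s_1')$ of $\sigma_1\times\sigma_2$ restrict to sections of $r$ on $\graf(\und{r})$ by construction, and their product bracket equals $\mathrm{pr}_1^\ast[s_1,s_1']_1+\mathrm{pr}_2^\ast[\hat{r}(s_1),\hat{r}(s_1')]_2$; the second subalgebroid axiom forces this bracket to restrict into $\Sec(r)$, which is precisely the identity $[\hat{r}(s_1),\hat{r}(s_1')]_2(y)=r_y([s_1,s_1']_1(\und{r}(y)))=\hat{r}([s_1,s_1']_1)(y)$ of~(b).

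For the converse, the first subalgebroid axiom follows at each point by extending a given $X_1$ to a local section $s_1$ with $s_1(\und{r}(y))=X_1$ and invoking (a) (so that $\hat{r}(s_1)(y)=X_2$). For the second axiom, pick a local frame $(e_\alpha)$ of $E_1$; the sections $\tilde{u}_\alpha:=\mathrm{pr}_1^\ast e_\alpha+\mathrm{pr}_2^\ast\hat{r}(e_\alpha)$ restrict to a local frame of $r$, so any section $\tilde{s}$ of $\sigma_1\times\sigma_2$ restricting into $\Sec(r)$ decomposes locally as $\sum_\alpha\Phi_\alpha\tilde{u}_\alpha+\tilde{t}$ with $\tilde{t}|_{\graf(\und{r})}=0$. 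Upon expanding $[\tilde{s},\tilde{s}']^{\mathrm{prod}}|_{\graf(\und{r})}$ by bilinearity and Leibniz, the principal contribution $[\tilde{u}_\alpha,\tilde{u}_\beta']^{\mathrm{prod}}|_{\graf(\und{r})}$ lies in $\Sec(r)$ by (b), and the Leibniz corrections from $\Phi_\alpha,\Phi_\beta'$ merely multiply $\tilde{u}_\alpha|_{\graf(\und{r})}\in\Sec(r)$ by smooth functions. The main technical point is the treatment of terms involving the remainder $\tilde{t}$: the bracket is not tensorial, so $\tilde{t}$ cannot be discarded outright, but condition (a) guarantees that $\rho^{\mathrm{prod}}_I(\tilde{u}_\alpha)(\und{r}(y),y)\in\T_{(\und{r}(y),y)}(\graf(\und{r}))$ and hence annihilates on $\graf(\und{r})$ any function vanishing on $\graf(\und{r})$, while $\rho^{\mathrm{prod}}_I(\tilde{t})$ itself vanishes on $\graf(\und{r})$ since $\tilde{t}$ does; writing $\tilde{t}$ locally as a combination of frame sections scaled by functions that vanish on $\graf(\und{r})$ then kills every Leibniz term containing $\tilde{t}$ on restriction, closing the argument.
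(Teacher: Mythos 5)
Your proof is correct and follows essentially the same route as the paper: both reduce the statement to the subalgebroid conditions of Definition~\ref{def:sublagebroid} applied to $r\subset E_1\times E_2$ over $\graf(\und{r})$ in the product algebroid, identifying condition (a) with the anchor condition and condition (b) with closure of the product bracket on the distinguished extensions $(s_1,\hat{r}(s_1))$. The only difference is that where the paper invokes the remark following Definition~\ref{def:sublagebroid} (condition (a) makes the bracket restricted to $\graf(\und{r})$ independent of the choice of extension), you verify this step by hand via the frame decomposition and the Leibniz bookkeeping for the remainder $\tilde{t}$, which is carried out correctly.
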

The proof is given in Appendix~\ref{sec:app}.
\begin{rem} We stress that although the anchor map is uniquely determined by the bracket operation in any general algebroid, the above condition \eqref{sub_cond:a} does not follow from \eqref{sub_cond:b}. A simple counterexample is provided by the zero endomorphism in a general algebroid.
\end{rem}

The following result states that fine restrictions of algebroidal relations to subalgebroids remain algebroidal relations.
\begin{prop}[restricting algebroidal relations]\label{prop:sub_r_also_algebroidal}
Let, for $j=1,2$, $(\sigma_j'\colon E_j'\ra M'_j, \kappa_j')$ be a~subalgebroid of $(\sigma_j\colon E_j\ra M_j, \kappa_j)$. Let us assume that a \ZM
\begin{gather*}
 \xymatrix{E_1 \ar[d]^{\sigma_1} \ar@{-|>}[r]^{r} & E_2 \ar[d]^{\sigma_2} \\
M_1 & M_2 \ar[l]_{\und{r}} }
\end{gather*}
restricts fine to $r'\subset E_1'\times E_2'$. Then if $r$ is an algebroidal relation then so is $r'$.
\end{prop}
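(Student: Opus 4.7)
The plan is to verify the characterization of algebroidal relations from Proposition~\ref{prop:algebroidal_relation} for the restricted relation $r'$. By the definition of fine restriction, $r'$ is a \ZM\ from $\sigma_1'$ to $\sigma_2'$ covering the base map $\und{r}':=\und{r}|_{M_2'}\colon M_2'\to M_1'$, so it makes sense to test conditions (a) and (b) of that proposition.

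The key technical observation is a compatibility between the induced operators on sections. Given $\tilde{s}_1\in\Sec_{M_1'}(E_1')$, choose locally an extension $s_1\in\Sec_{M_1}(E_1)$ with $s_1|_{M_1'}=\tilde{s}_1$; such extensions exist by standard vector subbundle arguments. I claim that
$$\widehat{r'}(\tilde{s}_1)=\hat{r}(s_1)\big|_{M_2'},$$
and that this is a well-defined section of $\sigma_2'$. Indeed, for $y\in M_2'$ the point $\und{r}(y)$ lies in $M_1'$ (by fine restriction), so $s_1(\und{r}(y))=\tilde{s}_1(\und{r}(y))\in E_1'$; the fine restriction property then forces $r_y(s_1(\und{r}(y)))\in E_2'$, and this vector coincides with $r'_y(\tilde{s}_1(\und{r}'(y)))$ by construction of $r'$. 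In particular, the restriction $\hat{r}(s_1)|_{M_2'}$ depends only on $\tilde{s}_1$ and not on the chosen extension.

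With this identity in place, conditions (a) and (b) for $r'$ follow by restricting the corresponding statements for $r$. For (a), the anchors $\rho'_{jI}$ of $\sigma_j'$ are by Definition~\ref{def:sublagebroid}\eqref{df:sub_algebroid_1} the restrictions of $\rho_{jI}$ to $E_j'$, so the $\und{r}$-relatedness of $\rho_{2I}(\hat{r}(s_1))$ and $\rho_{1I}(s_1)$ restricts directly to $\und{r}'$-relatedness of $\rho'_{2I}(\widehat{r'}(\tilde{s}_1))$ and $\rho'_{1I}(\tilde{s}_1)$. For (b), the subalgebroid bracket on $\sigma_j'$ is defined by $[\tilde{a},\tilde{b}]'_j=[a,b]_j|_{M_j'}$ for any extensions $a,b$; combining this with the displayed identity and with property (b) of $r$ yields
$$\widehat{r'}\bigl([\tilde{s}_1,\tilde{s}'_1]'_1\bigr)=\hat{r}\bigl([s_1,s'_1]_1\bigr)\big|_{M_2'}=[\hat{r}(s_1),\hat{r}(s'_1)]_2\big|_{M_2'}=[\widehat{r'}(\tilde{s}_1),\widehat{r'}(\tilde{s}'_1)]'_2.$$

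The only real obstacle is the bookkeeping around the displayed identity relating $\widehat{r'}$ and $\hat{r}$: one must verify both that a suitable extension $s_1$ exists (locally enough for the pointwise statements we need) and that the restriction of $\hat{r}(s_1)$ to $M_2'$ is independent of this extension. The former is standard for vector subbundles and the latter is immediate from the pointwise formula $\hat{r}(s_1)(y)=r_y(s_1(\und{r}(y)))$. Once these technical points are settled, the rest is pure diagram chasing using the subalgebroid axioms and the algebroidal character of $r$.
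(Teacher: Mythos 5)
Your argument is correct, but it takes a genuinely different route from the paper's. You work at the level of sections: you establish the compatibility $\widehat{r'}(\tilde{s}_1)=\hat{r}(s_1)|_{M_2'}$ for local extensions and then verify conditions $(a)$ and $(b)$ of Proposition~\ref{prop:algebroidal_relation} for $r'$ by restricting the corresponding properties of $r$, using that the subalgebroid anchors and bracket are themselves defined by restriction. The paper instead argues entirely at the tangent level, directly from Definition~\ref{df:alg_relation}: it takes $X,Y$ that are $\kappa_1'\times\kappa_2'$-related with $X\in\T r'$ and $Y$ over a point of $r'$, observes that they are then $\kappa_1\times\kappa_2$-related so $Y\in\T r$ because $r$ is algebroidal, and concludes $Y\in\T r'=\T r\cap(\T E_1'\times\T E_2')$ by invoking the external result (Theorem~A.5 of the authors' earlier paper) that the tangent lift $\T r$ of a finely restricting \ZM\ again restricts fine. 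The paper's proof is shorter and stays within the relational framework, at the cost of leaning on that tangent-lift lemma; yours is self-contained within the present paper but carries the extension bookkeeping. Two small points you should make explicit to be fully rigorous: conditions $(a)$ and $(b)$ of Proposition~\ref{prop:algebroidal_relation} are stated for global sections, so when you apply them to your local extensions $s_1,s_1'$ you need the (standard, bump-function) remark that both conditions localize, since the bracket and $\hat{r}$ are local operators; and your verification of $(a)$, $(b)$ for an arbitrary $\tilde{s}_1\in\Sec_{M_1'}(E_1')$ via local extensions is legitimate precisely because both conditions are pointwise equalities over $M_2'$. With those remarks added, the proof is complete.
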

\begin{proof} We should check that $r'$ is a subalgebroid of $(\sigma_1'\times \sigma_2',\kappa_1'\times\kappa_2')$. Since $r$ restricts fine to $E_1'\times E_2'$, the relation $r'$ is a vector subbundle of $E_1'\times E_2'$. Let us take $X, Y\in \T E_1'\times \T E_2'$ such that $X,Y$ are $\kappa_1'\times \kappa_2'$-related and $X\in \T r'$ while $Y$ lies over $y\in r'\subset E_1'\times E_2'$. We have to show that $Y\in\T r'$.

First note that $Y\in \T r$, because $r$ respects algebroid structures of $E_1$ and $E_2$, and $X, Y$ are also $\kappa_1\times \kappa_2$-related as $E_1'\times E_2'$ is a subalgebroid of $E_1\times E_2$, and $X\in \T r$ while $y\in r'\subset r$. Next, by \cite[Theorem~A.5]{MJ_MR_models_high_alg_2015}, $\T r$ restricts fine to $\T E_1'\times \T E_2'$. Therefore, since $Y$ belongs also to $\T E_1'\times \T E_2'$, we get $Y\in \T r' = \T r\cap (\T E_1'\times \T E_2')$ as we claimed.
\end{proof}

{\bf Characterization of algebroids with an additional structure.} We shall now express specific conditions \eqref{cond:skew}--\eqref{cond:lie} from Definition~\ref{def:algebroid} in terms of the corresponding \ZM\ $\kappa$.

\begin{lem}[characterization of various algebroids]\label{lem:char_kappa}
Let $(\sigma\colon E\ra M,[\cdot,\cdot],\rhoL,\rhoR)$ be a general algebroid structure on $\sigma$ and let $\kappa\colon \T \sigma\relto\tau_E$ be the corresponding \Zakrzewski . Then $\sigma$ is
\begin{enumerate}[$(i)$]\itemsep=0pt
	\item\label{item:kappa_skew} A skew algebroid if and only if $\kappa$ is symmetric, i.e., if $(X,Y)\in \kappa$ then $(Y, X)\in \kappa$ $($equivalently $\kappa=\kappa^T)$.
	\item \label{item:kappa_al}An almost-Lie algebroid if and only if it is skew, and $(\T \rho, \T \rho)\colon \kappa \mZM \kappa_M$ is a morphism of \ZM s $($in the sense of Definition~{\rm \ref{def:zm_morphism})}:
\begin{gather}\label{eqn:kappa_AL}\begin{split}&
\xymatrix{
\T E \ar[d]_{\T\rho} \ar@{-|>}[rr]^\kappa && \T E\ar[d]^{\T \rho}\\
\T\T M \ar[rr]^{\kappa_M} && \T\T M.
}\end{split}
\end{gather}
Here $\rho=\rhoL=\rhoR$ is the anchor map.
	\item \label{item:kappa_lie}A Lie algebroid if and only if it is almost-Lie and $\kappa\subset\T E\times \T E$ is a subalgebroid of the product algebroid $(\T\sigma\times \tau_E, \dd_\T\kappa\times\kappa_E)$. In other words, $\kappa\colon\T\sigma\relto\tau_E$ is an algebroidal relation.
\end{enumerate}
\end{lem}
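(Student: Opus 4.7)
\textbf{Part (i)} follows directly from Remark~\ref{rem:kappa_T}: the passage $\kappa \mapsto \kappa^T$ corresponds on algebraic data to $[a,b] \mapsto -[b,a]$ together with interchanging $\rhoL$ and $\rhoR$. Hence $\kappa = \kappa^T$ is equivalent to simultaneously having a skew-symmetric bracket and $\rhoL = \rhoR$, which is the definition of a skew algebroid.

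\textbf{Part (ii).} Assuming skewness, $\rho := \rhoL = \rhoR \colon E \to \T M$ is a single VB morphism over $\id_M$. I would apply Proposition~\ref{prop:alg_morphism} with $\phi = \rho$, target algebroid the tangent algebroid $(\tau_M, \kappa_M)$: this turns the $\catZM$-morphism condition $(\T\rho, \T\rho) \colon \kappa \mZM \kappa_M$ into the statement that $\rho$ is a morphism of (Lie) algebroids. Unpacking Definition~\ref{def:algebroid_morphism}, the anchor-compatibility squares~\eqref{eqn:compatible_anchors} are automatic since the anchor of $(\tau_M, \kappa_M)$ is $\id_{\T M}$, while the bracket-compatibility condition~\eqref{eqn:algebroid_morphism}, together with the Leibniz rule~\eqref{eqn:bracket} and tensoriality, reduces on sections to $\rho[a,b] = [\rho(a), \rho(b)]_{\T M}$, which is precisely the AL axiom~\eqref{eqn:al_algebroid}.

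\textbf{Part (iii).} Assuming AL, the plan is to show that the Jacobi identity is equivalent to $\kappa$ being an algebroidal relation from $(\T\sigma, \dd_\T\kappa)$ to $(\tau_E, \kappa_E)$, which by Definition~\ref{df:alg_relation} is the same as $\kappa$ being a subalgebroid of the product algebroid. Apply Proposition~\ref{prop:algebroidal_relation} and verify conditions~(a) and~(b). For~(a), the anchor of $(\tau_E, \kappa_E)$ is $\id_{\T E}$ and the anchors of $(\T\sigma, \dd_\T\kappa)$ are $\dd_\T\rho_{L/R} = \kappa_M \circ \T\rho_{L/R}$; $\und\kappa$-relatedness of $\hat\kappa(s)$ with $\dd_\T\rho_{L/R}(s)$ over $\rho_L$ then follows from AL by a direct check, most efficiently using the local form~\eqref{eqn:kappa_local}. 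For~(b), the $C^\infty(\T M)$-tensoriality~\eqref{e:hat_r_axiom} of $\hat\kappa$ and the Leibniz-type behaviour of $[\cdot,\cdot]_{\dd_\T}$ and $[\cdot,\cdot]_{\T E}$ allow us to restrict to a generating family of sections of $\T\sigma$, and the natural choice is the tangent lifts $\T b$ for $b \in \Sec_M(E)$. On such sections Example~\ref{ex:tang_lift} gives $[\T a, \T b]_{\dd_\T} = \T[a,b]$, so~(b) becomes the equality $\hat\kappa(\T[a,b]) = [\hat\kappa(\T a), \hat\kappa(\T b)]_{\T E}$ of vector fields on~$E$.

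The main obstacle is closing this last step: one needs to identify the vector field $\hat\kappa(\T b)$ on $E$ via the defining formula~\eqref{eqn:kappa_from_bracket}, then compute the Lie bracket on $E$ of two such fields and compare with $\hat\kappa(\T[a,b])$. Rather than doing this directly, I would pair both sides with linear functions on $E$ (equivalently, sections of $\sigma^\ast$); after using the Leibniz rule~\eqref{eqn:bracket}, identity~\eqref{eqn:T_fa} and the AL axiom, the difference collapses to the cyclic Jacobiator $[[a,b],c] + \mathrm{cyc.}$ paired with that section, showing that~(b) is equivalent to the Jacobi identity on $\sigma$ and thus completing the characterization of Lie algebroids.
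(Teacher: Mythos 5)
Parts (i) and (ii) of your proposal are correct. For (i) you argue exactly as the paper does, via Remark~\ref{rem:kappa_T}. For (ii) you take a genuinely different route: the paper applies $\T\rho$ to the defining formula \eqref{eqn:bracket_from_kappa} and compares the result with the same formula written for the tangent algebroid $(\tau_M,\kappa_M)$, obtaining \eqref{eqn:al_algebroid}$\Leftrightarrow$\eqref{eqn:kappa_AL} by inspection; you instead recognize \eqref{eqn:kappa_AL} as the statement that $\rho$ is an algebroid morphism onto the tangent algebroid and invoke Proposition~\ref{prop:alg_morphism}. Your unpacking is right: with $\und{\phi}=\id_M$ and the anchor of $(\tau_M,\kappa_M)$ equal to $\id_{\T M}$ the squares \eqref{eqn:compatible_anchors} are automatic, and taking the trivial presentation $\phi_\ast a=\rho(a)$ in \eqref{eqn:algebroid_morphism} collapses it to $\rho[a,b]=[\rho(a),\rho(b)]_{\T M}$. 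This buys economy (it reuses Proposition~\ref{prop:alg_morphism} instead of redoing a computation), at the price of resting on a heavier statement than the paper's two-line comparison.

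In (iii) there is a concrete gap at the reduction step: the tangent lifts $\T b$, $b\in\Sec_M(E)$, do \emph{not} generate $\Sec_{\T M}(\T E)$ as a $\Cf(\T M)$-module. Indeed, over the zero section of $\T M$ the ``$\dot y$''-component of any combination $\sum_i f_i\cdot\T b_i$ vanishes, so vertical lifts $b^{(0)}$ are not in the span; as the paper notes after Definition~\ref{def:lift_section}, one needs \emph{all} lifts $s^{(k-\alpha)}$, $\alpha=0,\dots,k$, to span. Consequently, checking condition (b) of Proposition~\ref{prop:algebroidal_relation} only on pairs of tangent lifts does not yet give it for all sections, even granting your (correct) module argument, which — as in the proof of Proposition~\ref{prop:Lie_HA} — also uses condition (a), i.e.\ the AL axiom, to pass from a generating family to arbitrary sections. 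The repair is cheap but must be stated: enlarge the family by the vertical lifts and verify in addition $\wh{\kappa}\big([\T a,b^{(0)}]_{\dd_\T}\big)=\big[\wh{\kappa}(\T a),\wh{\kappa}\big(b^{(0)}\big)\big]_{\T E}$ and the analogous identity for two vertical lifts; by Proposition~\ref{p:algebroid_Tk_E} one has $[\T a,b^{(0)}]_{\dd_\T}=[a,b]^{(0)}$ and $[a^{(0)},b^{(0)}]_{\dd_\T}=0$, while $\wh{\kappa}\big(b^{(0)}\big)$ is the ordinary vertical lift of $b$ on $E$ (since $\kappa$ is the identity on the core), and a short computation on linear and pull-back functions shows these identities hold for any general algebroid, with no appeal to Jacobi. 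Your final step is fine: pairing $\big[\wh{\kappa}(\T a),\wh{\kappa}(\T b)\big]_{\T E}-\wh{\kappa}(\T[a,b])$ with linear functions and using \eqref{eqn:bracket}, \eqref{eqn:T_fa} and AL does collapse the difference to the Jacobiator. Note finally that the paper proves (iii) by an entirely different mechanism: it dualizes $\kappa$ to the linear bi-vector on $E^\ast$ (Remark~\ref{rem:kappa_dual}) and quotes the known equivalences Jacobi $\Leftrightarrow$ linear Poisson $\Leftrightarrow$ algebroidal relation from the cited literature; your route is essentially the one the paper deploys later, in the proof of Proposition~\ref{prop:Lie_HA}, so once the generating family is corrected it is a legitimate self-contained alternative.
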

The proof is given in Appendix~\ref{sec:app}.

{\bf An application -- prolongations of AL algebroids.}
Throughout this section we argued that \ZM s provide a consistent language to describe (Lie) algebroids, alternative to the standard treatment of the topic. Besides, some known constructions in the theory of (Lie) algebroids have more evident definitions in the language of \Zakrzewski . To justify this claim we shall now give a non-standard definition of a prolongation of an AL algebroid over a~fibration~\mbox{\cite{Cortes_Inn_survey_2006, Higgins_Mackenzie_1990}}, the crucial notion in the Lagrangian and Hamiltonian formalisms for mechanics on Lie algebroids developed by Mart\'{i}nez in \cite{Martinez_geom_form_mech_lie_alg_2001,Martinez_lagr_mech_lie_alg_2001}.
 \begin{ex}[prolongation of an AL algebroid over a fibration]\label{ex:prolong} Let $\pi\colon P\ra M$ be a fibration and let $(\sigma\colon E\ra M, \kappa)$ be an almost-Lie algebroid. Denote by $\rho\colon E\ra \T M$ the related anchor map. Then $\Tau^EP := E \times_{\T M} \T P = \{(e, v)\in E\times \T P\colon \rho(e) = \T \pi (v)\}$ is a vector bundle over~$P$ which carries an almost-Lie algebroid structure defined by the restriction of the relation $\kappa\times \kappa_P\colon \T E\times \T\T P\relto \T E\times \T\T P$ to $\T (\Tau^E P)\simeq \T E\times_{\T\T M}\T\T P$:
\begin{gather*}
 \xymatrix{
 \T E \times_{\T \T M} \T\T P \ar[d] \ar@{-|>}[rr]^{\kappa\times \kappa_P} && \T E \times_{\T \T M} \T\T P \ar[d] \\
 \T P && E\times_{\T M} \T P. \ar[ll]
 }
 \end{gather*}

 Indeed, the only thing to check is that $\Tau^EP$ is a subalgebroid of the product algebroid $(\sigma\times \tau_P\colon E \times \T P \ra M \times P, \kappa\times \kappa_P)$. This is straightforward, given $X, X'\in \T E$ and $Y, Y'\in \T \T P$ such that $(X, X')\in \kappa$, $Y'=\kappa_P(Y)$, and $(X, Y)$ is tangent to $\Tau^EP$, we need to prove that $(X', Y')$ is also tangent to $\Tau^EP$, i.e., that
 $\T \rho(X') = \T \T \pi(Y')$. But $\T \rho(X') = \kappa_M(\T \rho (X))$ while $\T \T \pi(Y')= \kappa_M(\T \T \pi (Y))$ (due to the assumption that $\kappa$ is almost-Lie) and the result follows.
 \end{ex}
Another non-standard characterization of the notion of the prolongation, also emphasizing the role of the AL axiom \eqref{eqn:al_algebroid}, was recently provided by one of us in \cite[Proposition~3.1]{MJ_tt_vs_prolong}.

\section{Recollection of $\N$-graded manifolds}\label{sec:gr_bund}

{\bf \grBs\ and homogeneity structures.} \emph{Higher order algebroids} which we shall introduce and study in the next section are modeled on geometric objects which generalize the notion of a vector bundle. Such a generalization (catching, in particular, the canonical graded structure of the higher tangent bundle~$\TT{k} M$ -- a fundamental example from the point of view of physical applications) was first proposed by Voronov in~\cite{Voronov_2002}, within the framework of supergeometry, as non-negatively graded ($\N$-graded, for short) manifolds. Voronov noticed that they expand into a~tower of fibrations and suggested to see a non-negative grading as a~generalization of a linear structure. In this paper we choose to work with \emph{purely even non-negatively graded manifolds}, a particular subclass of Voronov's objects. In \cite{JG_MR_gr_bund_hgm_str_2011} they are referred to as \emph{graded bundles} (see also the introduction to~\cite{BGR}). Here we shall recall basic properties and constructions associated with these objects.

An important example is the \emph{higher $(\thh{k}$-order$)$ tangent bundle $\tau^k_M\colon \TT{k} M\ra M$} of a mani\-fold~$M$, consisting of $\thh{k}$-order tangency classes (called \emph {$k$-velocities}) of curves in~$M$. Bundle $\TT{1} M=\T M$ is just the tangent bundle of $M$, however for $k>1$, $\tau^k_M$ is no longer a vector bundle. We shall see this at the elementary level.

Given a smooth function $f$ on a manifold $M$ and an integer $\alpha=0,1,\hdots,k$ one can construct a function $f^{(\alpha)}$ on $\TT k M$, the so-called \emph{$\alpha$-lift of $f$} (see~\cite{Morimoto_Lifts}). It is defined by
\begin{gather*}
f^{(\alpha)}(\tclass{k}{\gamma}):= \left.\frac{\dd^\alpha}{\dd t^\alpha}\right|_{t=0} f(\gamma(t)),
\end{gather*}
where $\tclass{k}{\gamma}$ denotes the $k$-{jet} of a curve $\gamma\colon \R \ra M$ at zero. We shall usually write $\dot{f}$, $\ddot{f}$ instead of~$f^{(1)}$,~$f^{(2)}$, respectively. The \emph{adapted coordinates}~$(x^{a}, \dot{x}^a, \ddot{x}^a, \ldots)$ for $\TT{k} M$, induced by coordinates $(x^a)$ on~$M$, are obtained by applying the above lifting procedure to coordinate functions~$x^a$, and are naturally graded. (We simply assign \emph{weight}~$k$ to coordinates \mbox{$x^{a, (k)} := (x^a)^{(k)}$}.) In particular, on $\TT{2} M$ they transform as
\begin{gather}\label{e:T2M}
x^{a'}=x^{a'}(x), \qquad \dot{x}^{a'} = \frac{\partial x^{a'}}{\pa x^b} \dot{x}^b, \qquad
\ddot{x}^{a'} = \frac{\pa x^{a'}}{\pa x^b} \ddot{x}^b + \frac{\pa^2 x^{a'}}{\pa x^b\pa x^c}\dot{x}^b \dot{x}^c
\end{gather}
thus the weight of both left and right sides of the above equalities is the same. From a geo\-metric point of view, fibres of $\tau^k_M$ are equipped with a~special structure. Namely, we have a~canonical action of the multiplicative monoid of real numbers $(\R, \cdot)$ on these fibres defined by re-parametrizing curves representing the elements of $\TT{k} M$:
\begin{gather*}
h\colon \ \R \times \TT{k} M \ra \TT{k} M, \qquad \big(t, \tclass{k}{\gamma}\big) \mapsto \tclass{k}{\gamma_t},
\end{gather*}
where $\gamma_t(s)=\gamma(ts)$. We clearly see, that the $\alpha$-lift of a function~$f$ on~$M$ is a \emph{homogeneous} function on~$\TT{k} M$ of \emph{weight} $\alpha$, i.e.,{\samepage
\begin{gather*}
f^{(\alpha)}(h_t(v)) = t^\alpha f^{(\alpha)}(v),
\end{gather*}
where $h_t(v) := h(t, v)$. This also explains why the adapted coordinates on $\TT{k} M$ are graded.}

Properties of the higher tangent bundle $\TT{k} M$ motivate the concepts of a non-negatively graded manifold and a homogeneity structure. The definition below is equivalent to the definition of a purely even positively-graded manifold according to Voronov \cite[Definition~4.1]{Voronov_2002}. Here we reformulate the condition of ``cylindricity'' of positive even coordinates from that definition as the condition that fibers are diffeomorphic to~$\R^n$.

\begin{df}[$\N$-graded manifold \cite{Voronov_2002}]\label{def:gr_bund}
 A \emph{$\N$-graded manifold} is a smooth fibration \mbox{$\sigma^k\colon\! E^k{\ra} M$} in which we are given a distinguished class of fibre coordinates (called \emph{graded coordinates}) with non-negative integer \emph{weights} assigned. Moreover, it is assumed that these graded coordinates identify the fibres with~$\R^n$ (for some integer~$n$), and that the transition functions are multi-variable polynomials that preserve the weights. The index $k$ in $E^k$ indicates that the weights on $E^k$ are less or equal~$k$. We say that~$E^k$ has \emph{\degree}~$k$.

A \emph{homogeneity structure} (the idea of which dates back at least to \cite{Severa_2005}) is a manifold~$E$ equipped with a smooth action $h\colon \R\times E\ra E$ of the multiplicative monoid of real num\-bers~$(\R, \cdot)$. Surprisingly, both concept coincide in the smooth setting~\cite[Theorem~4.2]{JG_MR_gr_bund_hgm_str_2011}, in particular, every \grB\ $\sigma^k\colon E^k\ra M$ admits a canonical homogeneity structure \mbox{$h^{E^k}\colon \R\times E^k\ra E^k$}. For this reason we shall use the terms \grB\ and homogeneity structure interchangeably.
\end{df}

Graded coordinates on a \grB\ $\sigma^k\colon E^k \ra M$ can be denoted by $(x^a, y^i_w)$ where a (superfluous) index $w=w(i)\in \Z_{>0}$ at $y^i$ indicates that a fibre coordinate~$y^i$ is homogeneous of weight~$w$. The base coordinates $(x^a)$ are assumed to have weight zero. In such a notation the associated homogeneity structure reads as
\begin{gather*}
h\colon \ \R \times E^k \ra E^k, \qquad h\big(t, \big(x^a, y^i_w\big)\big) = \big( x^a, t^w y^i_w\big), \qquad t\in \R.
\end{gather*}
It is convenient and fruitful to encode the structure of a \grB\ by means of a~canonical vector field on $E^k$ called the \emph{weight vector field} which in graded coordinates is given by
\begin{gather*}
 \Delta = \sum_i w\, y^i_w \frac{\pa}{\pa y^i_w}.
\end{gather*}
 For example, the canonical weight vector field on $\TT{k} M$ is
$ \Delta_M^k = \sum_{\alpha = 1}^k \sum_a \alpha x^{a, (\alpha)} \pa_{x^{a, (\alpha)}}$. In fact, the weight vector field provides an equivalent characterization of the structure of a \grB. For brevity, we often refer to a \grB\ as to a pair $(E^k, \Delta)$.

 A \emph{morphism} from a \grB\ $\sigma_E\colon E^k\ra M$ to $\sigma_F\colon F^k \ra M$ is a smooth map $\phi\colon E^k\ra F^k$ commuting with the respective homogeneity structures, i.e., $\phi \circ h^E = h^F\circ \phi$. Equivalently $\phi$ relates the corresponding weight vector fields $\Delta_E$ and $\Delta_F$. \grBs\ of \degree\ $k$ form a \emph{category} denoted by $\catGB[k]$. In view of \cite[Theorem 2.4]{JG_MR_hi_VB_2009}, $\catGB[1]$ is the category of vector bundles.

Every \grB\ has a \emph{zero section} $0_M^k\colon M\ra E^k$. The image of $M$ in $E^k$ is defined by \emph{putting to zero} \cite[Example 1.10]{BGR} all graded coordinates of positive weight. More generally, one can invariantly put to zero all \emph{fibre} coordinates of weights less than a given number $j$. The obtained subset of $E^k$ is denoted by $E^k[\Delta \geq j]$ and it is, actually, a graded subbundle of~$E^k$.

Given a \grB\ $\big(E^k, \Delta\big)$ of \degree\ $k$ and an integer $0\leq j\leq k$ one can
construct a~canonical projection from $E^k$ onto a \grB\ of \degree\ $j$, denoted by \mbox{$E^k[\Delta\leq j]$}, obtained by \emph{removing all coordinates of weights greater than}~$j$ \cite[Definition~1.6]{BGR}. As transformation rules for $E^k$ of coordinates of weight $\leq j$ involve only coordinates of weights $\leq j$ the above construction is correct. It follows that
a \grB\ $\sigma^k\colon E^k\to M$ induces a~\emph{tower of affine fibrations}~\cite{Voronov_2002}
\begin{gather}\label{e:seq_aff_fib}
E^k\xrightarrow{\sigma^k_{k-1}} E^{k-1}\to\cdots \xrightarrow{\sigma^2_1} E^1\xrightarrow{\sigma^1_0} M,
\end{gather}
where $E^k[\Delta\leq j]$ is denoted shortly by $E^j$.

\begin{df}[the top core of a \grB]
Let $\sigma^k\colon E^k\ra M$ be a \grB\ of \degree\ $k$. The \emph{top core} of $E^k$, denoted by $\core{E^k}$, is the set
\begin{gather*}
\core{E^k} = \big\{ v\in E^k\colon \sigma^k_{k-1} (v) \in 0_M^{k-1}(M) \subset E^{k-1} \big\},
\end{gather*}
where $\sigma^k_{k-1}\colon E^k\ra E^{k-1}$ as in~\eqref{e:seq_aff_fib}. Locally, $\core{E^k}$ is defined by \emph{putting to zero} all fibre coordinates of weights less than $k$:
\begin{gather*}
 \core{E^k} = \big\{\big(x^A, y^a_w\big)\colon y^a_w=0 \text{ for any } y^a_w \text{ such that } 1\leq w < k\big\} \subset E^k,
 \end{gather*}
hence $\big(x^A, y^a_k\big)$ are local coordinates on $\core{E^k}$. The top core $\core{E^k}$ is naturally a vector bundle over~$M$ with the homotheties defined locally by
\begin{gather*} t.\big(x^A, y^a_k\big) = \big(x^A, t\cdot y^a_k\big)
\end{gather*}
for $t\in\R$. Moreover, the top core $\core{\cdot}$ is a functor from the category $\catGB[k]$ to the category of vector bundles.
\end{df}

\begin{ex}[split \grBs]\label{ex:split_gr_bndl} Given a sequence of vector bundles $E_j$, $j=1, \ldots, k$, over the same base manifold $M$ we can turn the Whitney sum $E =\bigoplus_{j=1}^k E_j$ into a \grB\ of \degree\ $k$ by assuming that linear coordinates on fibres of $E_j$ have weight $j$. The obtained \grB\ will be denoted by $\Cross_{j=1}^k E_j[j]$, where the notation $V[j]$ means that we assign weight $j$ to linear coordinates on a vector space~$V$. In other words, we have a~functor from the category of graded vector bundles supported in degrees $-1, -2, \ldots, -k$ to the category~$\catGB[k]$ of \grBs\ of~\degree\ $k$.

\grBs\ equipped with a \emph{splitting}, i.e., a \grB\ isomorphism $p\colon E^k \ra \Cross_{j=1}^k E_j$ as above are called \emph{split \grBs}. It is worth to remember that in the smooth real setting any \grB\ is (non-canonically) isomorphic to its \emph{split form} which is obtained from the sequence of the top core bundles $\core{E^j}$, $j=1, \ldots, k$: $E^k \simeq \Cross_{j=1}^k \core{E^j}$.
\end{ex}

{\bf Multi-\grBs.} Of particular interests are geometric objects which admit several compatible \grB\ structures. Multi-grading appeared as a powerful tool in Voronov's paper~\cite{Voronov_2012}, were it was used to study Mackenzie's double Lie algebroids~\cite{Mackenzie_1992}.
We define a \emph{$k$-tuple \grB} $(E, \Delta_1, \ldots, \Delta_k)$ to be a manifold $E$ with $k$ pair-wise commuting weight vector fields, i.e., $[\Delta_i,\Delta_j]=0$ for any $i,j=1,\hdots,k$. Equivalently, $E$ admits $k$ pair-wise commuting homogeneity structures $h^i\colon \R\times E\ra E$ with $i=1,\hdots,k$. That is, $h^i_t\circ h^j_s=h^j_s\circ h^i_t$ for any $i,j=1,2,\hdots,k$ and any $t,s\in\R$.
Interestingly, every $k$-tuple \grB\ admits local coordinates which are simultaneously graded with respect to any of its graded structures \cite[Theorem 5.2]{JG_MR_gr_bund_hgm_str_2011}.

 Double vector bundles, the tangent and cotangent bundles of a \grB\ \mbox{$\sigma^k\colon\! E^k{\ra} M$} are examples of double \grBs. There are numerous other constructions of multi-\grBs: the iterated higher tangent bundles $\T^{k_1} \cdots \T^{k_r} M$, the substructures and quotients (e.g., the construction of the linearization of a \grB, the vertical subbundle $\V E^k = \ker{\T \sigma^k} \subset T E^k$) and others (see \cite[Definitions~1.6 and~1.8]{BGR} for ``the removal of coordinates of weight greater than a given number~$l$'' and ``putting to zero coordinates of $X$-negative weights'' where $X$ is linear combination of weight vector fields $\Delta_i$).

The notion of core of a DVB has a natural generalization for multi-\grBs.
\begin{df}[the core and the ultracore of a multi-\grB]\label{df:Core} Let $(E, \Delta_1, \ldots, \Delta_k)$ be a $k$-tuple \grB\ and $k>1$. The \emph{core $\Core(E)$ of $E$} is obtained by putting to zero all fiber coordinates $y^i_w$ with multi-weight $w$ in which at least one of $w_1, \ldots, w_k$ is zero.
Formally, using the construction $E[X\geq 0]$ \cite[Definition~1.8]{BGR} of putting to zero all fiber coordinates with negative weight with respect to a vector field~$X$ being a linear combination of weight vector fields~$\Delta_i$, we may write the core of $E$ as the intersection
\begin{gather*}
\Core(E):= \bigcap_{j=1}^k E\bigg[N \Delta_j - \sum_{i\neq j} \Delta_i \geq 0\bigg],
\end{gather*}
for a sufficient large number $N$. Indeed, $E[N \Delta_j - \sum_{i\neq j} \Delta_i \geq 0]$ is the subset of $E$ obtained by putting to zero all \emph{fiber} coordinates $y^i_w$ of the total fibration $E\ra M= E[\Delta_1=\Delta_2=\cdots=\Delta_k=0]$ with multi-weight $w=(w_1, \ldots, w_k)$ in which $w_j=0$. In general, $\Core(E)$ remains a~$k$-tuple \grB.

By the \emph{ultracore} of $E$ we mean the top core $\core{E}$ of the \grB\ $(E, \Delta_1+\cdots+\Delta_k)$.
\end{df}

Note that for a double vector bundle, the core in the above sense coincides with the usual notion of the core of a DVB. What is more, our definition of the ultracore coincides with the one introduced by Mackenzie
\cite{Mackenzie_Triple_VB_2005}.

{\bf Graded-linear manifolds.}
 Let us now take a closer look at double \grBs\ with one of the homogeneity structures being linear. If $\big(E,h^1,h^2\big)$ is such a structure, we may treat it as a vector bundle $\sigma\colon E\ra M$ (say, that the VB structure on $E$ corresponds to the homogeneity structure $h^2$) equipped with a \grB\ structure encoded in the homogeneity structure $h^1\colon \R \times E\ra E$ such that the structure maps of the vector bundle $\sigma$ (vector bundle projection, zero section, addition, and scalar multiplication) are \emph{weighted} or, in other words, homogeneous with respect to~$h^1$. In particular, for each $\lambda\in \R$, the mapping $v \mapsto \lambda \cdot_{\sigma} v$, $v\in E$, should be a \grB\ morphism, or equivalently, it should commute with the homotheties~$h^2_t$ for any $t\in \R$. A double \grB\ $\big(E, h^1, h^2\big)$ in which $\big(E, h^2\big)$ is a vector bundle is called a~\emph{graded-linear manifold} or, equivalently, a \emph{weighted vector bundle}, when we want to put emphasis on the underlining linear structure. In the language of \cite{BGG}, we would speak about \emph{graded-linear bundles}. A graded-linear manifold $\sigma\colon E\ra M$ can be depicted in a diagram
\begin{gather}\label{d:gr_lin_bndl}\begin{split}&
\xymatrix{
E^k \ar[rr]^{\sigma^k_0} \ar[d]_{\sigma^{\{k\}}} && E^0 \ar[d]^{\sigma^{\{0\}}} \\
M^k \ar[rr]^{\underline{\sigma}^k_0} && M^0.}\end{split}
\end{gather}
Here $E^k = E$, $\sigma^{\{k\}} = \sigma$ and the superscript $k$ at $E$ indicates the order of the homogeneity structure. The base $M = M^k$ of the vector bundle $\sigma$ carries a \grB\ structure induced from $\sigma^k_0\colon E^k\ra E^0$. The notation in parentheses $\{ \ \}$ is used in order to distinguish the vector bundle projection $\sigma^{\{k\}}$ given by the homotheties $h^2$ from the \grB\ projection $\sigma^k_0$ given by the homogeneity structure~$h^1$.

An upcoming definition of a higher (Lie) algebroid involves two particular examples of graded-linear manifolds: the tangent bundle of a \grB\ of \degree\ $k$, and the $\thh{k}$-order tangent bundle of a vector bundle. Thus it will be important to understand the core bundles of these examples in greater detail. For this we shall show now that the core of any graded-linear manifold admits a canonical splitting into a direct sum of vector bundles. In the special case of a double vector bundle we recover the fact that the core is a vector bundle over the final base~$M$.

\begin{prop}[the core of a graded-linear manifold]\label{p:core_gr_lin_bndl} The core of any \degree-$k$ graded-linear manifold $(D, \Delta_1, \Delta_2)$ is a graded vector bundle, i.e.,
\begin{gather*}
\Core(D) = \bigoplus_{j=1}^k D_j[j]
\end{gather*}
for some vector bundles $D_j$ canonically associated with $D$. In particular cases,
\begin{gather*}
\Core\big(\TT{k} E\big) \simeq E[1]\oplus E[2] \oplus \cdots \oplus E[k], \qquad \text{and}\\
\Core(\T E^k) \simeq E^1[1] \oplus \core{E^2}[2] \oplus \cdots \oplus \core{E^k}[k].
\end{gather*}
\end{prop}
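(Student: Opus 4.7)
My plan is to exploit simultaneously adapted graded coordinates on $D$, available by \cite[Theorem~5.2]{JG_MR_gr_bund_hgm_str_2011}, and then perform the bi-weight bookkeeping. Since $\Delta_2$ generates a vector bundle structure, every coordinate on $D$ has $\Delta_2$-weight in $\{0,1\}$, while the $\Delta_1$-weight lies in $\{0,1,\ldots,k\}$. I would therefore distinguish four classes of coordinates according to their bi-weight $(w_1,w_2)$: base coordinates $x^a$ of bi-weight $(0,0)$; fibre coordinates $z^\alpha_p$ of the base graded fibration $\underline{\sigma}^k_0\colon M^k\to M^0$ of bi-weight $(p,0)$, $p\geq 1$; linear fibre coordinates $y^i_0$ of bi-weight $(0,1)$ (those descending to $M^0$); and linear fibre coordinates $y^i_p$ of bi-weight $(p,1)$, $p\geq 1$.

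By Definition~\ref{df:Core}, $\Core(D)$ is the locus where every coordinate of bi-weight with a vanishing component is set to zero, so it is cut out by $z^\alpha_p=0$ and $y^i_0=0$ and is coordinatized by $(x^a, y^i_p)_{1\leq p\leq k}$. The crux of the argument will be to show that on $\Core(D)$ the coordinate transitions cannot mix different $\Delta_1$-weights. A new coordinate $\tilde y^i_p$ is a polynomial in the old coordinates of total bi-weight $(p,1)$; because $\Delta_2$-weights are additive and confined to $\{0,1\}$, such a polynomial must be linear in the $\Delta_2$-weight-$1$ variables with coefficients polynomial in the $\Delta_2$-weight-$0$ ones, schematically
\begin{gather*}
\tilde y^i_p \;=\; \sum_{j} G^i_j(x,z)\, y^j_0 \;+\; \sum_{j,q} H^i_{j,q}(x,z)\, y^j_q,
\end{gather*}
with $G^i_j$ of pure $\Delta_1$-weight $p$ and $H^i_{j,q}$ of pure $\Delta_1$-weight $p-q$. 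Restricting to $\Core(D)$ kills the first sum, and the second survives only when $p-q=0$, reducing the transition to the block-diagonal form $\tilde y^i_p = F^i_j(x)\, y^j_p$. Consequently the subset $D_p\subset\Core(D)$ defined chart-wise by further putting $y^i_q=0$ for $q\neq p$ is independent of the chart; it is a vector subbundle of $\Core(D)\to M^0$ of pure $\Delta_1$-weight $p$, and the local Whitney sum $\Core(D)=\bigoplus_{p=1}^k D_p[p]$ glues to the claimed canonical global splitting.

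For the two particular cases it remains only to read off the adapted coordinates. On $\TT{k}E$ the natural coordinates $(x^{a,(\alpha)},y^{i,(\alpha)})_{0\leq\alpha\leq k}$ carry bi-weights $(\alpha,0)$ and $(\alpha,1)$ respectively, the second component being the $E$-linear weight that the higher-tangent functor preserves; the core is parametrized by $(y^{i,(\alpha)})_{1\leq\alpha\leq k}$, and in each fixed weight $\alpha$ the transition collapses to the original linear transition of $E\to M$, giving $D_\alpha\simeq E$. On $\T E^k$ the tangent coordinates $(x^a,y^i_w,\dot x^a,\dot y^i_w)$ have bi-weights $(0,0),(w,0),(0,1),(w,1)$; the core is coordinatized by $(\dot y^i_w)_{1\leq w\leq k}$, and for each $w$ the transition, obtained by differentiating the polynomial transition of $E^k$ and then setting $y^i_{w'}=0$, reproduces by construction the linear transition formula of the top core $\core{E^w}$. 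Summing over $w$ and using $\core{E^1}=E^1$ yields the second identification.

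The only substantive obstacle will be the block-diagonality of the transitions on $\Core(D)$, but this reduces to elementary weight bookkeeping made automatic by $\Delta_2$-weights being confined to $\{0,1\}$; everything else is mere inspection of coordinates.
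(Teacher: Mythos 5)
Your proposal is correct and takes essentially the same route as the paper's proof: simultaneously bi-graded coordinates, the weight-bookkeeping observation that on $\Core(D)$ the transitions of the surviving coordinates of bi-weight $(p,1)$ become linear and block-diagonal with coefficients depending only on the final base, hence a canonical Whitney-sum splitting. Your treatment of the two special cases by direct coordinate inspection matches the paper's, which merely phrases them via the top-core isomorphisms $\core{\TT{k} E}\simeq E$ and $\core{\T E^k}\simeq \core{E^k}$.
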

\begin{proof}Bi-graded coordinates on $D$ have the form $\big(x^A, y^a_{(i, j)}\big)$ where $i=0$ or $1$ and $0\leq j\leq k$, and $(i,j)\neq (0,0)$. As $C:= \Core(D)$ is given locally in $D$ by equations $y^a_{(1, 0)}=0$ and $y^a_{(0, j)}=0$, $1\leq j\leq k$, the transformation rules for the fibre coordinates $y^a_{(1, j)}|_{C}$, $j=1, \ldots, k$, are of the form $y^{a'}_{(1, j)}|_{C} = Q^{a'}_{j, b}\, y^b_{(1, j)}|_{C}$ for some functions $Q^{a'}_{j, b}$ on the final base $M$ of $D$. It follows that the submanifold of $D$ defined locally by vanishing all fibre coordinates except those of weight ${(1, j)}$, where $1\leq j\leq k$ is a fixed integer, form a vector bundle $D_j$ over $M$ and $C$ is just the Whitney sum of these vector bundles.
 The vector bundle $D_j$ can be recognized as the top core bundle of $(D[\Delta_2\leq j], \Delta_1+\Delta_2)$. The decomposition of the core of $\T^k E$ and $\T E^k$ are due to the isomorphisms $\core{\T^k E}\simeq E$ and $\core{\T E^k} \simeq \core{E^k}$. \end{proof}

{\bf Weighted structures and their reductions from `higher' to `lower' order.} In a~similar manner we can introduce notion of other weighted geometrical objects and structures, in particular \ZM s.

\begin{df}[weighted \ZM ]\label{def:wZM} Let $\sigma_i\colon E_i\ra M_i$, where $i=1,2$, be weighted VBs of order~$k$ with homogeneity structures $h_i$ on $E_i$. A \ZM\ $r\colon \sigma_1\relto \sigma_2$ is called a \emph{weighted \ZM} if $r\subset E_1\times E_2$ is homogeneous with respect to $h_1\times h_2$. In other words, $r$ is a graded submanifold of the product of \grBs\ $(\sigma_1)^k_0 \times (\sigma_2)^k_0\colon E_1\times E_2 \ra E_1^0\times E_2^0$. (See diagram~\eqref{d:gr_lin_bndl} for the notation concerning weighted vector bundles.) In particular, the base map $\und{r}\colon M_2\ra M_1$ is a~\grB\ morphism. (We may assume that \degree s of both the homogeneity structures are the same, as every \grB\ of \degree\ $k$ is also of \degree\ ${k'}$ for any $k'>k$.)
\end{df}
\begin{rem}[on weighted structures] Weighted (Lie) algebroids and weighted (Lie) groupoids considered in~\cite{BGG} are defined in the same spirit. One simply requests that the structure maps of a (Lie) algebroid (resp.\ a (Lie) groupoid) are graded, in some sense. Particular examples are $\VB$-algebroids and $\VB$-groupoids extensively studied in the literature. (The graded structure is then of \degree\ 1.) We shall not use these notions in our paper.
\end{rem}

For a weighted structure we may try to perform a \emph{reduction from a `higher' to a `lower' order} in the tower of affine fibrations \eqref{e:seq_aff_fib}. If the starting point is a graded-linear manifold $\big(E^k, \Delta_1, \Delta_2\big)$ of \degree\ $k$, then its $j$-order reduction $E^j = E^k[\Delta_1\leq j]$ is still a graded-linear manifold and $\sigma^k_j\colon E^k\ra E^j$ is a morphism in the category of graded-linear manifolds.
In particular, given a weighted \ZM\ $r\colon \sigma_1 \relto \sigma_2$ between weighted vector bundles $\sigma_i\colon E^k_i\ra M_i^k$ ($i=1,2$) of \degree\ $k$ we may consider~$r$ as a~$\N$-graded submanifold in the product bundle $\sigma_1^k\times \sigma_2^k\colon E^k_1 \times E^k_2 \ra E^0_1 \times E^0_2$ and then project it to get its lower-order reduction \mbox{$r^j\colon E^j_1 \relto E^j_2$}, which is not only a graded-linear submanifold, but also a weighted \ZM\ of \degree\ $j$, for $j=0, 1, 2, \ldots, k$. This and another useful observation in a similar spirit is presented below.

\begin{prop}[reduction of weighted \ZM s]\label{prop:red_h_to_l} Let $r\colon \sigma_1\relto \sigma_2$ be a weighted \ZM\ where $\sigma_i\colon E_i^k\ra M_i^k$ $(i=1, 2)$ are weighted vector bundles of \degree\ $k$, and let $0\leq j\leq k$. Then
\begin{enumerate}[$(i)$]\itemsep=0pt
 \item \label{prop:red_h_to_l:i1} the lower-order reductions $r^j\colon \sigma^{\{j\}}_1\relto \sigma^{\{j\}}_2$ remain weighted \ZM s.
 \item \label{prop:red_h_to_l:i2} If $r'\colon \sigma_1' \relto \sigma_2'$ is another weighted \ZM\ and if $(\phi_1, \phi_2)\colon r \mZM r'$, where $\phi_i\colon \sigma_i \ra \sigma_i'$ are morphism of graded-linear manifolds, is a $\catZM$-morphism, then the lower-order reductions $\big(\phi_1^j, \phi_2^j\big)\colon r^j \mZM {r'}^j$ are also $\catZM$-morphisms.
\end{enumerate}
\end{prop}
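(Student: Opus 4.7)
The plan is to combine the characterization of $\catZM$-morphisms from Proposition~\ref{p:ZM-morphism} with the observation that the order reduction $E^k \mapsto E^j = E^k[\Delta_1 \leq j]$ is a functor on graded-linear manifolds that preserves both the linear and the homogeneity sub-structures. In particular, the projection $\pi^k_j \colon E^k \to E^j$ is a surjective morphism of vector bundles covering a \grB\ morphism of bases, and every morphism $\phi$ of graded-linear manifolds descends to a morphism $\phi^j$ on the reductions satisfying $\phi^j \circ \pi^k_j = \pi^{\prime k}_j \circ \phi$.

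For part~(i), I would first recall that a weighted \ZM\ $r \subset E_1^k \times E_2^k$ is simultaneously a vector subbundle (in the linear structure) and a graded submanifold (in the homogeneity structure) of the product. In bi-graded adapted coordinates, along the lines of Remark~\ref{rem:local_kappa_a}, $r$ is cut out locally by equations of the form $x_1^a = f^a(x_2)$ for the base map and $y_2^i = A^i_l(x_2)\, y_1^l$ for the fiberwise linear part, where $f^a$ and $A^i_l$ are polynomials homogeneous of prescribed weights in the homogeneity grading. The non-negativity of these weights ensures that every equation of homogeneity weight $\leq j$ involves only coordinates of homogeneity weight $\leq j$. These ``weight-$\leq j$'' equations then descend to $E_1^j \times E_2^j$ and are readily identified with the image $r^j := (\pi^k_j \times \pi^k_j)(r)$; they exhibit $r^j$ simultaneously as a vector subbundle whose fibres are graphs of linear maps (the \ZM\ property) and as a graded submanifold. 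Hence $r^j$ is a weighted \ZM\ of \degree\ $j$ covering the reduced base map $\und{r}^j \colon M_2^j \to M_1^j$.

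For part~(ii), I would apply Proposition~\ref{p:ZM-morphism}: $(\phi_1, \phi_2)\colon r \mZM r'$ is a $\catZM$-morphism precisely when $(\phi_1 \times \phi_2)(r) \subset r'$. Since each $\phi_i$ is a morphism of graded-linear manifolds, it descends to $\phi_i^j$ on the reductions, and projecting the above inclusion under $\pi^k_j \times \pi^k_j$ yields $(\phi_1^j \times \phi_2^j)(r^j) \subset {r'}^j$, which by the same proposition is equivalent to $(\phi_1^j, \phi_2^j)\colon r^j \mZM {r'}^j$ being a $\catZM$-morphism.

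The main obstacle, modest in size, is confirming that the set-theoretic image $r^j$ coincides with the zero set of the truncated equations and thus inherits the required smooth and graded-linear structure of a \ZM\ (rather than merely that of a graded subset). This is precisely where the bookkeeping with non-negative weights does the essential work; once this is in place the remainder is a formal consequence of the functoriality of the order-reduction operation.
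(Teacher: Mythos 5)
Your proposal is correct and follows essentially the same route as the paper: part~(i) via the local weighted-coordinate form of a weighted \ZM\ and truncation of the equations of weight $\leq j$ (the paper spells out the same system $x^a_w=f^a_w(y)$, $Y^I_w=\sum_A X^A_{w'}g^I_{A,w-w'}(y)$ and the uniqueness of the related vector), and part~(ii) via Proposition~\ref{p:ZM-morphism} together with the compatibility $\phi^j_i\circ\pi^k_j=\pi'^k_j\circ\phi_i$, which is just the inclusion-form of the paper's lift-and-project element chase.
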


 The proof is given in Appendix~\ref{sec:app}.

\section{Higher (Lie) algebroids}\label{sec:high_alg}

{\bf Definitions and fundamental examples.} As was already suggested in the introduction, the higher tangent bundle $\tau^k_M\colon \TT{k} M\ra M$ (cf.\ the beginning of Section~\ref{sec:gr_bund}) together with the canonical isomorphism $\kappa^k_M\colon \TT{k} \T M\ra \T \TT{k} M$ will be our fundamental example of a higher (Lie) algebroid. According to our considerations from the beginning of the previous section, a choice of local coordinates $(x^a)$ on $M$ induces canonical coordinates $(x^a,\dot x^b)$ on~$\T M$, $\big(x^{a,(\alpha)}\big)_{\alpha=0,\hdots,k}$ on~$\TT{k} M$, $\big(x^{a},\dot x^{b},x^{a,(1)},\dot x^{b,(1)},\hdots,x^{a,(k)},\dot x^{b, (k)}\big)$ on $\TT{k}\T M$, and $\big(x^{a,(\alpha)},\dot x^{b,(\beta)}\big)_{\alpha,\beta=0,\hdots,k}$ on $\T\TT{k} M$. In these coordinates $\kappa^k_M$ reads as
\begin{gather*} \kappa^k_M\colon \ \big(x^{a},\dot x^{b},x^{a,(1)},\dot x^{b,(1)},\hdots,x^{a,(k)},\dot x^{b, (k)}\big)\longmapsto \big(x^{a},x^{a,(1)},\hdots,x^{a,(k)},\dot x^{b},\dot x^{b,(1)},\hdots,\dot x^{b,(k)}\big).\end{gather*}
Clearly, for $k=1$ we get $\kappa^1_M=\kappa_M$ studied in Example~\ref{ex:tang_alg}. It follows easily from the above coordinate formula that~$\kappa^k_M$ is an isomorphism in the category of graded-linear manifolds.

According to our motivating considerations from the introduction, we would now like to generalize the above example by substituting the fibration $\TT{k}M\ra M$ with an arbitrary \grB\ $E^k\ra M$ of \degree\ $k$. Instead of a graded-linear isomorphism $\kappa^k_M$ we shall take a~weighted \Zakrzewski\ $\kappa^k$ between $\TT{k} E^1$ and $\T E^k$. Such postulates are motivated by the description of (Lie) algebroids (of order one) in terms of a \ZM\ $\kappa$ (cf.\ Proposition~\ref{prop:sigma_from_kappa} and Lemma~\ref{lem:kappa_from_sigma}).

\begin{df}[general higher algebroid, HA] \label{def:higher_algebroid} A \emph{general higher} ($\thh{k}$-order) \emph{algebroid} (\emph{HA}, in short) is a \grB\ $\sigma^k\colon E^k\ra M$ of \degree\ $k$ together with a weighted \ZM\ $\kappa^k \subset \TT{k} E^1\times \T E^k$ from $\TT{k} \sigma^1$ to $\tau_{E^k}$ (covering a (graded) mapping $\rho^k\colon E^k \ra \TT{k} M$) such that the relation $\kappa^1\colon \T\sigma^1\relto\T\tau_{E^1}$ being the reduction to order one of $\kappa^k$ equips $\sigma^1\colon E^1\ra M$ with a~general algebroid structure.
 \begin{gather}\label{diag:HA}\begin{split}&
\xymatrix{\TT{k}E^1\ar[d]^{\TT{k}\sigma^1} \ar@{-|>}[rr]^{\kappa^k}&&\T E^k\ar[d]^{\tau_{E^k}}\\
\TT{k}M &&\ar[ll]_{\rho^k} E^k.
}\end{split}
\end{gather}

In addition:
\begin{enumerate}[(i)]\itemsep=0pt
\item If $\big(\sigma^1, \kappa^1\big)$ is skew, we call $\big(\sigma^k, \kappa^k\big)$ a \emph{skew} HA.
\item \label{axiom:AL_ha} If $\big(\sigma^k, \kappa^k\big)$ is skew and $\big(\TT{k}\rho^1, \T \rho^k\big)\colon \kappa^k\mZM \kappa^k_M$ is a morphism in the category of \ZM s, where $\rho_1\colon E^1\ra \T M$ is the order-1 reduction of~$\rho^k$, then we call $\big(\sigma^k, \kappa^k\big)$ an \emph{almost-Lie $($AL$)$}~HA.
\item \label{axiom:Lie_ha} A skew higher algebroid $\big(\sigma^k,\kappa^k\big)$ in which $\kappa^k$ is a subalgebroid of the product of the algebroids $\big(\TT{k} \sigma^1,\dd_{\TT{k}}\kappa^1\big)$ (the $\thh{k}$-tangent lift of $\big(\sigma^1,\kappa^1\big)$ -- see Section~\ref{ssec:higher_lie} -- and the tangent algebroid $(\tau_{E^k}, \kappa_{E^k})$), is called a \emph{Lie} HA. In fact a Lie HA must be an almost-Lie HA -- see Proposition~\ref{prop:Lie_HA}. In the language of~\cite{Cattaneo_Dherin_Weinstein_2012},
 $\big(\sigma^k,\kappa^k\big)$ is Lie if $\kappa^k$ is a Lie algebroid comorphism.
\item \label{axiom:strong_ha} A general HA in which $\kappa^k$ induces an isomorphism on the core bundles $\Core\big(\kappa^k\big)\colon \Core\big(\TT{k} E\big) \ra \Core\big(\T E^k\big)$ is called a \emph{strong} HA.
 \end{enumerate}

Analogously to our convention from Section~\ref{sec:algebroids_as_zm} we will colectively refer to the above-defined objects as to \emph{$($Lie$)$ higher algebroids}.

We define a \emph{morphism} between (Lie) higher algebroids $\big(\sigma^k_E\colon E^k\ra M, \kappa^{k,E}\big)$ and $\big(\sigma^k_F\colon F^k\ra N, \kappa^{k,F}\big)$ to be a \grB\ morphism $\phi^k\colon E^k\ra F^k$ such that $\big(\TT{k} \phi^1, \T \phi^k\big)\colon \kappa^{k,E}\mZM \kappa^{k,F}$ is a~$\catZM$-morphism. Obviously, general higher algebroids form a \emph{category} which we denote by $\catHA$, with Lie HA, AL HA, and skew HA being its full subcategories.
\end{df}

Let us remark, that the symmetric role of the left and the right anchor occurring in the first order case is no longer present for (Lie) higher algebroids. The defining relation $\kappa^k$ covers a~(graded) morphism $\rho^k\colon E^k\ra\TT{k}M$, and its reduction to order~0 induces a relation (in fact a~linear map) $\kappa^0\colon E^1\ra \T M$. For $k=1$ these were the left and the right anchors, respectively. As we see, these maps are of quite a different nature. Note, however, that for skew higher algebroids $\kappa^0=\rho^1$.

Using Proposition~\ref{prop:red_h_to_l} we easily see that reduction of a (Lie) HA $\big(E^k, \kappa^k\big)$ to a lower order $j=1,2,\hdots,k$ gives an induced (Lie) higher algebroid structure on $E^j$ which is skew (resp. AL, Lie, strong) if $\big(E^k, \kappa^k\big)$ was so. Moreover, a (Lie) HA morphism $\phi^k\colon \big(E^k, \kappa^{k,E}\big)\ra \big(F^k, \kappa^{k,F}\big)$ induces a (Lie) HA morphisms $\phi^j\colon \big(E^j, \kappa^{j,E}\big)\ra \big(F^j, \kappa^{j,F}\big)$.

Examples of (Lie) higher algebroids will be discussed later in Sections~\ref{ssec:prolongations} and~\ref{sec:examples}. For now let us take a closer look at general higher algebroids of order 2.
\begin{rem}[a local form of a general higher algebroid of order 2]\label{rem:local_order_2}
Let $(x^a, y^i, z^\mu)$ be graded coordinates on an \degree-$2$ \grB\ $E^2\ra M$.
We shall find a general form of an algebroid $\big(E^2, \kappa^2\big)$ of order $2$. As in Remark~\ref{rem:local_kappa_a} we underline coordinates in $\T E^2$ when writing equations for $\kappa^2\subset \TT{2} E^1 \times \T E^2$. Taking weights into account
\begin{gather}\label{e:T2E_TE2_weights}
\begin{tabular}{|c|c|c|c|c|c|c|}\hline
\text{weight}& $(0,0)$ & $(0,1)$ & $(0,2)$ & $(1,0)$ & $(1,1)$ & $(1,2)$ \\ \hline
$\TT{2} E^1$ & $x^a$ & $\dot{x}^a$ & $\ddot{x}^a$ & $y^i$ & $\dot{y}^i$ & $\ddot{y}^i$ \\ \hline
$\T E^2$ & $x^a$ & $\und{y}^i$ & $\und{z}^\mu$ & $\dot{\und{x}}^a$ & $\dot{\und{y}}^i$ & $\dot{\und{z}}^\mu$ \\ \hline
\end{tabular}
\end{gather}
we get
\begin{gather}\label{e:local_kappa2}
\kappa^2\colon \
\begin{cases}
\dot{x}^a = Q^a_i \und{y}^i, \\
\ddot{x}^a = \frac{1}{2} Q^a_{ij}\,\und{y}^i\und{y}^j + Q^a_\mu\,\und{z}^\mu, \qquad \text{where} \quad Q^a_{ij}=Q^a_{ji}, \\
\dot{\und{x}}^a = Q^{'a}_i y^i,\\
\dot{\und{y}}^i = \dot{y}^i + Q^i_{jk}\,\und{y}^j y^k,\\
\dot{\und{z}}^\mu = Q^\mu_i \ddot{y}^i + Q^\mu_{ij} \und{y}^i \dot{y}^j + Q^\mu_{\nu i}\und{z}^\nu y^i + \frac{1}{2} Q^\mu_{ij,k} \und{y}^i\und{y}^j y^k, \qquad \text{where} \quad Q^\mu_{ij,k}=Q^\mu_{ji,k},
\end{cases}
\end{gather}
for some structure functions $Q^{\cdots}_{\cdots}$ of base coordinates $x^a$. For a skew algebroid $\big(E^2, \kappa^2\big)$ we have $Q^{'a}_i= Q^a_i$ and $Q_{jk}^i = - Q_{kj}^i$. An algebroid is strong if the matrix $Q^\mu_i$ is invertible. The conditions of being an almost-Lie or a Lie algebroid result in more complicated systems of equations for the structure functions.
\end{rem}

 Our considerations from Section~\ref{sec:algebroids_as_zm} lead to a notion of a higher subalgebroid.
\begin{df}[subalgebroid of a (Lie) HA]\label{def:higher_sub_algebroid} Let $\big(\sigma^k\colon E^k\ra M, \kappa^k\big)$ be a (Lie) higher algebroid and let ${\sigma'}^{k}\colon {E'}^{k}\ra M'$ be a $\N$-graded submanifold of $\sigma^k$. We call ${\sigma'}^{k}$ a \emph{higher subalgebroid} of $\big(\sigma^k,\kappa^k\big)$ if the \ZM\ $\kappa^k$ restricts fine to $\TT{k}{\sigma'}^1\times\tau_{{E'}^k}\subset\TT{k}\sigma^1\times\tau_{E^k}$.
\end{df}

\subsection{On the Lie axiom}\label{ssec:higher_lie}

In order to discuss the notion of a Lie higher algebroid, we shall first take a look at higher tangent lifts of a first order (Lie) algebroid structure. We will begin our considerations by studying the lifts of sections of a vector bundle.

{\bf Higher lifts of sections of a vector bundle.} Let $\sigma\colon E\ra M$ be a vector bundle and $s\in\Sec_M(E)$ its section. The assignment $\Sec_M(E)\ni s\mapsto \TT{k} s\in\Sec_{\TT{k} M}\big(\TT{k} E\big)$ is injective, and $\R$-linear, yet it cannot be onto, since $\rank\big(\TT{k}\sigma\big)=(k+1)\cdot\rank(\sigma) > \rank(\sigma)$. It is, however, possible to characterize all sections of $\TT{k}\sigma$ in terms of sections of $\sigma$ by means of the following procedure.

First{,} note that the multiplicative action of the reals $\R\times E\ra E$ lifts to the multiplicative action $\TT{k}\R\times\TT{k} E\ra\TT{k} E$ of $\TT{k}\R$, the latter understood as an $\R$-algebra with addition and multiplication defined as $\TT{k}$-lifts of the standard addition and multiplication in $\R$. In fact, as an algebra $\TT{k}\R$ is canonically isomorphic to the truncated polynomial algebra $\R[\eps]/_{\<\eps^{k+1}>}$. (This algebra is often denoted $\D_k$ and called the \emph{algebra of $\thh{k}$-order numbers}.) From the point of view of the theory of natural bundles \cite{KMS_1993}, $\TT{k}$ is a product-preserving bundle functor associated with the \emph{Weil algebra} $\D_k$. In conclusion, on $\TT{k} E$ we have an operation of multiplication by the elements of $\D_k$, which is in fact determined by the action of the generator $\eps\in\D_k$. In coordinates $\big(x^{a,(\alpha)},y^{i,(\alpha)}\big)$ on $\TT{k}\sigma$ induced from linear coordinates $(x^a,y^i)$ on $\sigma$ it reads as
\begin{gather*}\eps\cdot \big(x^{a,(\alpha)};y^{i,(0)},y^{i,(1)},\hdots,y^{i,(k)}\big)=
\big(x^{a,(\alpha)};0, y^{i,(0)}, 2 y^{i, (1)}, \hdots,k y^{i,(k-1)}\big).\end{gather*}
This construction leads us to

\begin{df}[lifts of a section]\label{def:lift_section} Let $s$ be a section of $\sigma$ and $\alpha=0,1,\hdots,k$ an integer. By the \emph{$(k-\alpha)$-lift of $s$} we understand the section of $\TT{k}\sigma$ defined as
\begin{gather*} s^{(k-\alpha)}:= \frac{(k-\alpha)!}{k!}\eps^\alpha\cdot \TT{k}s. \end{gather*}
In particular $s^{(0)}=\frac{1}{k!} \eps^k\cdot\TT{k}s$ is called the \emph{vertical} and $s^{(k)}=\TT{k}s$ the \emph{total lift} of $s$, in agreement with the standard notions for $k=1$.
\end{df}
Using local coordinates it is easy to check that sections of the form $s^{(k-\alpha)}$ with $\alpha=0,1,\hdots,k$ span locally the full space of sections $\Sec_{\TT{k}M}\big(\TT{k}E\big)$. Recall the notion of an $(\alpha)$-lift of a smooth function introduced at the beginning of Section~\ref{sec:gr_bund}. For any function $f\in\Cf(M)$ the operation of the $(k)$-lift has the following property
\begin{gather*}(f\cdot s)^{(k)}=\TT{k} (f\cdot s)=\TT{k} f\cdot\TT{k}s\\
\hphantom{(f\cdot s)^{(k)}}{} = \left(\sum_{\alpha=0}^kf^{(\alpha)}\frac{\eps^\alpha}{\alpha !}\right)\cdot s^{(k)}= \sum_{\alpha=0}^k
\frac{f^{(\alpha)}}{\alpha!}\frac{k!}{(k-\alpha)!}s^{(k-\alpha)} =\sum_{\alpha=0}^k \binom{k}{\alpha} f^{(\alpha)}s^{(k-\alpha)}.
\end{gather*}
We recognize the standard formula for the iterated derivative. An analogous formula for \mbox{$(f\cdot s)^{(k-\alpha)}$} can be easily derived from the latter.

{\bf Higher tangent lifts of (Lie) algebroids.} It is well-known that if a VB $\sigma\colon E\ra M$ carries a (Lie) algebroid structure, then its $\thh{k}$-tangent lift $\TT{k}\sigma\colon \TT{k} E\ra\TT{k} M$ has the so-called lifted (Lie) algebroid structure (e.g.,~\cite{K_N_Tangent_lifts_2013}). It can be elegantly described in terms of the \ZM\ $\kappa$ related with the initial algebroid structure on~$\sigma$.

\begin{df}[higher tangent lift of a (Lie) algebroid]\label{def:h_t_lift_algebroid} Let $(\sigma\colon E\!\ra\! M, \rhoL,\rhoR\colon E\!\ra\! \T M, [\cdot, \cdot])$ $\simeq(\sigma,\kappa\colon \T\sigma\relto\tau_E)$ be a (Lie) algebroid. A \ZM\ $\dd_{\TT{k}}\kappa\colon \T \TT{k} \sigma\relto \tau_{\TT{k}E}$ over an anchor map $\dd_{\TT{k}}\rhoL\colon \TT{k} E\ra \T \TT{k} M$ obtained, up to natural identifications, by applying functor~$\TT{k}$ to~$\kappa$:
\begin{gather}\label{diag:kappa^Tk_E}\begin{split}&
\xymatrix{
\T \TT{k} E \ar[d]^{\T \TT{k} \sigma} \ar[rr]^{\simeq \kappa_{k,E}^{-1}} &&
\TT{k} \T E\ar@{-|>}[rr]^{\TT{k} \kappa}\ar[d]^{\TT{k} \T \sigma} &&\TT{k} \T E \ar[d]^{\TT{k} \tau_{E}}
\ar[rr]^{\simeq \kappa_{k, E}} && \T \TT{k} E \ar[d]^{\tau_{\TT{k} E}}
\\
\T\TT{k} M &&\ar[ll]_{\simeq \kappa^k_M}
 \TT{k} \T M && \TT{k} E \ar[ll]_{\TT{k} \rhoL}
 && \ar[ll]_{=}\TT{k} E,
}\end{split}
\end{gather}
i.e., $\dd_{\TT{k}}\kappa= \kappa_{k, E}^{-1}\circ\TT{k} \kappa\circ \kappa_{k, E}$, and $\dd_{\TT k} \rhoL:=\TT k\rhoL\circ\kappa_k^M$, equips $\TT{k} \sigma\colon \TT{k} E\ra \TT{k} M$ with a (Lie) algebroid structure. We call it the \emph{$\thh{k}$-tangent lift of the algebroid $(\sigma,\kappa)$}.
\end{df}

We shall now check that $\dd_{\TT{k}}\kappa$ indeed defines a (Lie) algebroid structure and then describe it in the classical terms of the anchors and the bracket operation.

\begin{prop}[properties of the lifted (Lie) algebroid]\label{p:algebroid_Tk_E} The relation $\dd_{\TT{k}}\kappa:=\kappa_{k, E}\circ\TT{k} \kappa\circ\kappa_{k, E}^{-1}$ defines a $($Lie$)$ algebroid structure on $\TT{k}\sigma$. Moreover,
\begin{enumerate}[$(i)$]\itemsep=0pt
\item The $($left and right$)$ anchor maps on $\TT{k} \sigma$ are given by $\dd_{\TT{k}}\rhoL:=\kappa^k_M\circ\TT{k}\rhoL$ and $\dd_{\TT{k}}\rhoR:=\kappa^k_M\circ\TT{k}\rhoR$, respectively.
\item \label{p:algebroid_Tk_E_item_bracket} The algebroid bracket $[\cdot,\cdot]_{\dd_{\TT{k}}}$ on $\TT{k}\sigma$ satisfies
\begin{gather}\label{p:algebroid_Tk_E_item_bracket_formula}
\left[ \frac{k!}{(k-\alpha)!} s_1^{(k-\alpha)}, \frac{k!}{(k-\beta)!} s_2^{(k-\beta)}\right]_{\dd_{\TT{k}}} = \frac{k!}{(k-\alpha-\beta)!} ([s_1, s_2]_{\sigma})^{(k-\alpha-\beta)},
\end{gather}
for any integers $\alpha,\beta=0,1,\hdots,k$ such that $\alpha + \beta \leq k$ and any sections $s_1,s_2\in\Sec_M(E)$, and $\big[s_1^{(k-\alpha)}, s_2^{(k-\beta)}\big]_{\dd_{\TT{k}}} = 0$ if $\alpha+\beta>k$. These properties fully determine the bracket $[\cdot,\cdot]_{\dd_{\TT{k}}}$ on the space of sections of $\TT{k}\sigma$.
\item \label{p:algebroid_Tk_E_item_Tk_E_is_Lie} If $(\sigma,\kappa)$ is a skew/AL/Lie algebroid, then so is $\big(\TT{k} \sigma,\dd_{\TT{k}}\kappa\big)$.
\end{enumerate}
\end{prop}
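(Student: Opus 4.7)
The plan is to invoke Proposition~\ref{prop:sigma_from_kappa} to extract the algebroid structure from $\dd_{\TT{k}}\kappa$, to read the anchors off diagram~\eqref{diag:kappa^Tk_E}, and to obtain the bracket formula by combining the $\TT{k}$-functoriality of $\kappa$ with the $\D_k$-module structure on $\Sec_{\TT{k}M}(\TT{k}E)$. Part~\eqref{p:algebroid_Tk_E_item_Tk_E_is_Lie} will then follow by pushing each clause of Lemma~\ref{lem:char_kappa} through the Weil functor $\TT{k}$.

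First I would verify that $\dd_{\TT{k}}\kappa$ falls into the class covered by Proposition~\ref{prop:sigma_from_kappa}: it must be a linear \ZM\ intertwining the two compatible VB structures $\T\TT{k}\sigma$ and $\tau_{\TT{k}E}$ on $\T\TT{k}E$ and inducing the identity on the common core bundle $\TT{k}E$. (Note that $\TT{k}\sigma\colon \TT{k}E\to\TT{k}M$ really is a vector bundle because $\TT{k}$ is a product-preserving Weil functor.) Since $\kappa$ is a linear \ZM\ from $\T\sigma$ to $\tau_E$ with the core-identity property, applying $\TT{k}$ produces a weighted \ZM\ $\TT{k}\kappa$ between the two VB structures $\TT{k}\T\sigma$ and $\TT{k}\tau_E$ on $\TT{k}\T E$, preserving the identity-on-cores property. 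The canonical isomorphism $\kappa_{k,E}$ intertwines these lifted VB structures with $\T\TT{k}\sigma$ and $\tau_{\TT{k}E}$ and restricts to the identity on the core, so the conjugate $\dd_{\TT{k}}\kappa=\kappa_{k,E}\circ\TT{k}\kappa\circ\kappa_{k,E}^{-1}$ is a linear \ZM\ of the required kind. Proposition~\ref{prop:sigma_from_kappa} then yields the general algebroid structure on $\TT{k}\sigma$; the base map and $(\tau_{\TT{k}E}\times\T\TT{k}\sigma)$-projection, read directly from~\eqref{diag:kappa^Tk_E}, are $\dd_{\TT{k}}\rhoL=\kappa^k_M\circ\TT{k}\rhoL$ and $\dd_{\TT{k}}\rhoR=\kappa^k_M\circ\TT{k}\rhoR$.

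For the bracket formula my strategy is to combine $\TT{k}$-functoriality with the action of the Weil algebra. Applying $\TT{k}$ to the defining identity~\eqref{eqn:bracket_from_kappa} for $\kappa$, and using the natural identification $\kappa_{k,E}$, gives, for $s_1,s_2\in\Sec_M(E)$,
\begin{gather*}
[\TT{k}s_1,\TT{k}s_2]_{\dd_{\TT{k}}}=\TT{k}[s_1,s_2],
\end{gather*}
which is the $\alpha=\beta=0$ case of~\eqref{p:algebroid_Tk_E_item_bracket_formula}. Next, multiplication by the generator $\eps\in\D_k\simeq\TT{k}\R$ is induced by a natural transformation of the functor~$\TT{k}$, hence is a VB endomorphism of $\TT{k}\sigma$ that is intertwined with itself by $\TT{k}\kappa$ (and therefore by $\dd_{\TT{k}}\kappa$). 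A short diagram chase then yields
\begin{gather*}
[\eps^\alpha\cdot a,\eps^\beta\cdot b]_{\dd_{\TT{k}}}=\eps^{\alpha+\beta}\cdot[a,b]_{\dd_{\TT{k}}}
\end{gather*}
for any $a,b\in\Sec_{\TT{k}M}(\TT{k}E)$. Substituting $s^{(k-\gamma)}=\tfrac{(k-\gamma)!}{k!}\eps^\gamma\cdot\TT{k}s$ from Definition~\ref{def:lift_section}, clearing factorials, and invoking $\eps^{k+1}=0$ in $\D_k$ yields~\eqref{p:algebroid_Tk_E_item_bracket_formula}, together with the vanishing when $\alpha+\beta>k$. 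These identities, combined with the Leibniz-type rule~\eqref{eqn:bracket} and the known anchor $\dd_{\TT{k}}\rhoL$, determine $[\cdot,\cdot]_{\dd_{\TT{k}}}$ on all of $\Sec_{\TT{k}M}(\TT{k}E)$, since the lifts $\{s^{(k-\alpha)}\colon s\in\Sec_M(E),\ 0\leq\alpha\leq k\}$ locally span this space.

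Part~\eqref{p:algebroid_Tk_E_item_Tk_E_is_Lie} is handled by checking that each characterization of Lemma~\ref{lem:char_kappa} is preserved under application of $\TT{k}$ and conjugation with $\kappa_{k,E}$. Symmetry of the relation $\kappa\subset\T E\times\T E$ is a set-theoretic condition which $\TT{k}$ carries to symmetry of $\TT{k}\kappa$ and hence of $\dd_{\TT{k}}\kappa$; this handles the skew case. The AL condition, the existence of a $\catZM$-morphism $(\T\rho,\T\rho)\colon\kappa\mZM\kappa_M$, becomes, by $\TT{k}$-functoriality, a $\catZM$-morphism $(\TT{k}\T\rho,\TT{k}\T\rho)\colon\TT{k}\kappa\mZM\TT{k}\kappa_M$, which via the naturality of the isomorphisms $\kappa_{k,\cdot}$ and $\kappa^k_{\cdot}$ translates into the first-order AL condition for $\dd_{\TT{k}}\kappa$ in the sense of Lemma~\ref{lem:char_kappa}\eqref{item:kappa_al}. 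The Lie case, being the subalgebroid condition of Lemma~\ref{lem:char_kappa}\eqref{item:kappa_lie} for $\kappa$ in the product algebroid, is preserved analogously. The main obstacle I anticipate is the careful verification that the $\D_k$-action really commutes with $\TT{k}\kappa$ in the manner claimed above, and the tracking of which VB structure on $\TT{k}\T E$ (or $\T\TT{k}E$) plays which role under the identifications $\kappa_{k,E}$; although these are formal consequences of $\TT{k}$ being a product-preserving bundle functor and $m_\eps$ being a natural transformation, the bi-grading bookkeeping demands some care. As a backup one can verify everything in the local coordinates~\eqref{eqn:kappa_local} of $\kappa$, reading the bracket off the induced local form of $\dd_{\TT{k}}\kappa$ via the formula from Proposition~\ref{prop:sigma_from_kappa}.
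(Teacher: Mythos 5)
Your treatment of the existence of the lifted structure, of the anchors, and of the bracket formula is correct and is essentially the paper's own proof: the paper likewise feeds $\dd_{\TT{k}}\kappa$ into Proposition~\ref{prop:sigma_from_kappa} (taking the fact that $\TT{k}$ of a linear, core-identity \ZM\ retains these properties from Theorem~A.5 of the authors' earlier work rather than re-deriving it from the Weil-functor formalism, but the content is the same), obtains $\big[\TT{k}s_1,\TT{k}s_2\big]_{\dd_{\TT{k}}}=\TT{k}[s_1,s_2]$ by applying $\TT{k}$ to \eqref{eqn:kappa_from_bracket} and conjugating with $\kappa_{k,E}$, and then deduces \eqref{p:algebroid_Tk_E_item_bracket_formula} from exactly your observation: all the \ZM s involved are $\TT{k}\R$-linear, hence the lifted bracket is $\D_k$-bilinear, and substituting $s^{(k-\alpha)}=\tfrac{(k-\alpha)!}{k!}\eps^{\alpha}\cdot\TT{k}s$ with $\eps^{k+1}=0$ gives the formula and the spanning argument closes part~(ii). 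Where you genuinely diverge is part~(iii): the paper reads skew/AL/Lie directly off the anchor and bracket formulas just established (e.g.\ the Jacobiator vanishes on the lifts $s^{(k-\alpha)}$ by \eqref{p:algebroid_Tk_E_item_bracket_formula} and, once the AL property holds, is tensorial, so it vanishes identically), whereas you push the relation-level characterizations of Lemma~\ref{lem:char_kappa} through $\TT{k}$. For the skew and AL clauses your functoriality argument is sound (symmetry survives conjugation by a single isomorphism, and the $\catZM$-morphism $(\T\rho,\T\rho)\colon\kappa\mZM\kappa_M$ lifts, modulo the bi-graded bookkeeping you acknowledge). The thin spot is the Lie clause: ``preserved analogously'' conceals a real step, namely that $\TT{k}$ carries a subalgebroid of the product $(\T\sigma\times\tau_E,\dd_\T\kappa\times\kappa_E)$ to a subalgebroid of the correspondingly lifted product, which in particular requires identifying $\dd_{\TT{k}}(\dd_\T\kappa)$ with $\dd_\T(\dd_{\TT{k}}\kappa)$ under the canonical flips; this is essentially the content of Proposition~\ref{prop:sub_algebroidal_rel}, which the paper proves separately and only after the present proposition. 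It is not circular to do so (parts (i)--(ii) are already in place), but you would have to supply that commutation argument explicitly; the paper's route via the bracket formula on lifts is the more economical way to finish (iii).
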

The proof is given in Appendix~\ref{sec:app}.

\begin{rem}[characterization of lifts by the canonical pairings]\label{rem:lifts_pairings} The pairing $\<\cdot, \cdot>_\sigma\colon E^\ast\times_M E\ra \R$ lifts to the pairing $\TT{k} \<\cdot, \cdot>_\sigma\colon \TT{k} E^\ast \times_{\TT{k} M}\TT{k} E \ra \R$ which enables us to identify sections of the vector bundle $\TT{k} \sigma$ with linear functions on $\TT{k} E^\ast$. As $\TT{k} E^\ast$ is, in particular, a \grB\ of \degree\ $k$, the space of sections of $\TT{k} \sigma$ is naturally graded.

Consider a section $s$ of $\sigma$. It is easy to see that its $(k-\alpha)$-lift $s^{(k-\alpha)}$ (considered as a~linear function on $\TT{k} E^\ast$) coincides with the $(k-\alpha)$-lift $\tilde{s}^{(k-\alpha)}$ of the linear function $\tilde{s}\colon E^{\ast}\ra \R$ canonically associated with the section $s$. Taking into account the \grB\ structure of $\TT{k} E^\ast$, a section of the form
$f^{(\alpha)} s^{(k-\beta)}$, where $f\in \Cf(M)$ and $s\in \Sec(E)$, would have weight $\alpha + k- \beta$. However, in a presence of a (Lie) algebroid structure on $\sigma$, it is reasonable to shift this gradation by $-k$, so that the section $f^{(\alpha)} s^{(k-\beta)}$ has weight $\alpha-\beta$. Then formula~\eqref{p:algebroid_Tk_E_item_bracket_formula} shows that homogeneous sections of $\TT{k} \sigma$ form a graded Lie algebra concentrated in degrees $-k, -k+1, \ldots$. It has a Lie subalgebra $\Sec_{\TT{k}M, \leq 0} \big(\TT{k} E\big)$ consisting of sections of non-positive weights which is of finite rank (over~$\Cf(M)$).
\end{rem}

{\bf Algebroid lifts of sections and the Lie axiom.} By Remark~\ref{rem:zm_sections} a \ZM\ between two vector bundles induces a canonical map between the associated spaces of sections of these bundles. On the other hand, in Definition~\ref{def:lift_section} we introduced natural lifts of sections of a vector bundle to sections of its higher tangent lift. Given a (Lie) higher algebroid structure, we can combine these two constructions to arrive at
\begin{df}[algebroid lift of a section]\label{def:alg_lifts_sections}
Let $\big(\sigma^k\colon E^k\ra M,\kappa^k\colon \TT{k}\sigma^1\relto\tau_{E^k}\big)$ be a (Lie) higher algebroid. Given a section $s\in\Sec_M\big(E^1\big)$ and an integer $\alpha=0,1,\hdots,k$ we define the \emph{$(k-\alpha)$-algebroid lift of $s$} as
\begin{gather*} s^{[k-\alpha]}:=\wh{\kappa^k}\big(s^{(k-\alpha)}\big)\in \X\big(E^k\big).\end{gather*}
\end{df}
The above concept naturally generalizes the notion of a \emph{vertical lift} and the \emph{algebroid lift} (for $(k,\alpha)=(1,0)$ and $(k,\alpha)=(1,1)$, respectively) known in the literature~\cite{JG_PU_Algebroids_1999}.

In \cite{JG_PU_Lie_alg_P-N_str_1997} Grabowski and Urba\'nski observed that the Jacobi identity can be elegantly reformulated in terms of the algebroid lifts. Their results easily generalize to the setting of Lie higher algebroids.
\begin{prop}[characterization of the Lie axiom]\label{prop:Lie_HA}
 The map $\wh{\kappa^k}\colon \Sec_{\TT{k}M} \big(\TT{k} E\big)\ra \X\big(E^k\big)$ preserve gradation, i.e., for a homogeneous section $s$ of $\TT{k} \sigma$ the vector field $\wh{ \kappa^k}(s)$ has the same weight as $s$. Moreover, a
higher algebroid $\big(\sigma^k\colon E^k\ra M,\kappa^k\colon \TT{k}\sigma^1\relto\tau_{E^k}\big)$ is Lie if and only if it is almost-Lie and if the associated algebroid lift satisfies
\begin{gather}\label{eqn:Lie_HA}
\left[\frac{k!}{(k-\alpha)!} s_1^{[k-\alpha]}, \frac{k!}{(k-\beta)!}s_2^{[k-\beta]}\right]_{E^k}=\frac{k!}{(k-\alpha-\beta)!}([s_1,s_2]_{\sigma^1})^{[k-\alpha-\beta]}
\end{gather}
for any sections $s_1,s_2\in\Sec_M\big(E^1\big)$ and any integers $\alpha,\beta=0,1,\hdots,k$. Above $[\cdot,\cdot]_{E^k}$ denotes the standard Lie bracket of vector fields on $E^k$, and $[\cdot,\cdot]_{\sigma^1}$ the algebroid bracket on the reduced $($Lie$)$ algebroid $\big(\sigma^1,\kappa_1\big)$.

Equivalently, an AL higher algebroid $\big(E^k, \kappa^k\big)$ is Lie if and only if
\begin{gather}\label{eqn:Lie_HA_leq0}
\wh{\kappa^k}\colon \ \Sec_{\TT{k} M, \leq0}\big(\TT{k} E\big) \ra \X_{\leq0}\big(E^k\big)
\end{gather}
is a graded Lie algebra morphism, where $\Sec_{\TT{k} M, \leq0}\big(\TT{k} E\big)$ $($resp.~$\X_{\leq0}\big(E^k\big))$ are Lie algebras generated by the homogeneous sections of $\TT{k}\sigma$ $($resp.\ vector fields on $E^k)$ of non-positive weight.
\end{prop}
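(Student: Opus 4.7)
My overall plan is to attack all three claims using the algebroidal relation machinery from Section~\ref{sec:algebroids_as_zm}, particularly Proposition~\ref{prop:algebroidal_relation}, combined with the weighted-\ZM\ observations from Proposition~\ref{prop:red_h_to_l}. The three claims are handled in sequence: first establish weight preservation from the weighted nature of $\kappa^k$; second, characterize the Lie axiom by unpacking the algebroidal-relation condition and specializing to the generating lifts $s^{(k-\alpha)}$; third, translate the result into the graded-Lie-algebra-morphism statement.

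The weight-preservation claim is a direct consequence of $\kappa^k$ being a \emph{weighted} \ZM, i.e., a submanifold homogeneous with respect to the product of graded structures on $\TT{k}E^1$ and $\T E^k$. A homogeneous section $s$ of $\TT{k}\sigma^1$ of weight $w$ (in the sense of Remark~\ref{rem:lifts_pairings}) has image that is a graded submanifold of $\TT{k}E^1$ of weight $w$; since $\kappa^k$ is homogeneous, its $\widehat{\kappa^k}$-image in $\T E^k$ retains weight $w$, which translates to $\widehat{\kappa^k}(s)$ being a homogeneous vector field of weight $w$ on $E^k$. One can verify this directly in local coordinates using the bi-homogeneity explicit in the order-two model~\eqref{e:local_kappa2}.

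For the Lie axiom characterization I would apply Proposition~\ref{prop:algebroidal_relation} to $\kappa^k$ viewed as an algebroidal relation from $\big(\TT{k}\sigma^1, \dd_{\TT{k}}\kappa^1\big)$ to $(\tau_{E^k}, \kappa_{E^k})$, obtaining two requirements: first, $\widehat{\kappa^k}(s)$ and $\dd_{\TT{k}}\rho^1(s)$ are $\rho^k$-related for every section $s$; second, bracket preservation $\widehat{\kappa^k}([s, s']_{\dd_{\TT{k}}}) = [\widehat{\kappa^k}(s), \widehat{\kappa^k}(s')]_{E^k}$. Since both anchors on $\tau_{E^k}$ are identities and the first-order algebroid is skew (so left and right anchors coincide on $\TT{k}\sigma^1$ by Proposition~\ref{p:algebroid_Tk_E}\ref{p:algebroid_Tk_E_item_Tk_E_is_Lie}), the first condition is equivalent to $(\TT{k}\rho^1 \times \T\rho^k)(\kappa^k) \subset \kappa_M^k$, i.e., precisely the AL axiom of Definition~\ref{def:higher_algebroid}\ref{axiom:AL_ha}; the equivalence relies on the fact that every $\kappa^k$-related pair $(X, Y)$ is locally of the form $\big(s(\rho^k(y)),\widehat{\kappa^k}(s)(y)\big)$ for some section $s$. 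The second condition, applied to basic generators $\frac{k!}{(k-\alpha)!} s_i^{(k-\alpha)}$ and combined with the bracket formula~\eqref{p:algebroid_Tk_E_item_bracket_formula} for the tangent-lifted algebroid, collapses directly to~\eqref{eqn:Lie_HA}. Conversely, since such generators locally span $\Sec_{\TT{k}M}\big(\TT{k}E\big)$ over $\Cf(\TT{k}M)$, a Leibniz-rule computation extends the bracket identity from generators to arbitrary sections: the cross-terms involving derivatives of smooth coefficients cancel precisely because the first condition (equivalently, the AL axiom) holds, via the defining identity \eqref{e:hat_r_axiom} for $\widehat{\kappa^k}$.

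The graded-Lie-algebra reformulation is then immediate: by weight preservation, $\widehat{\kappa^k}$ restricts to a map $\Sec_{\TT{k}M,\le 0}\big(\TT{k}E\big) \to \X_{\le 0}(E^k)$; both the domain and codomain are graded Lie subalgebras because the bracket of two elements of non-positive weight has non-positive weight (weights add under bracket); and~\eqref{eqn:Lie_HA}, together with the $\Cf(M)$-tensoriality of $\widehat{\kappa^k}$ inherited from the \ZM\ property, is exactly the condition that this restriction be a morphism of graded Lie algebras. The main obstacle I anticipate is in the second step: carefully matching the ``related anchors'' condition of Proposition~\ref{prop:algebroidal_relation} with the AL axiom, as the two are phrased in superficially different languages, and managing the tensorial extension from basic lifts to arbitrary sections with attention to the anchor-induced Leibniz corrections.
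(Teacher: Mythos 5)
Your proposal follows essentially the same route as the paper's own proof: weight preservation is deduced from the bi-homogeneity of $\kappa^k$ (the paper phrases this via the dual morphism $\eps_k\colon \T^\ast E^k\ra \TT{k}E^\ast$), the Lie axiom is unpacked through Proposition~\ref{prop:algebroidal_relation} applied to $\kappa^k$, with the anchor condition identified with the AL axiom and the bracket condition evaluated on the lifts $s^{(k-\alpha)}$ via Proposition~\ref{p:algebroid_Tk_E}\eqref{p:algebroid_Tk_E_item_bracket}, and the passage from lifts to arbitrary sections is made by the same $\Cf\big(\TT{k}M\big)$-module spanning plus Leibniz computation using~\eqref{e:hat_r_axiom}, \eqref{eqn:bracket} and the anchor relation. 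The concluding graded-Lie-algebra reformulation is handled as in the paper (in fact slightly more explicitly), so the proposal is correct and not a genuinely different argument.
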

\begin{proof} Let $s\in \Sec_{\TT{k} M} \big(\TT{k} E\big)$ be a homogeneous section, denote $X := \wh{ \kappa^k}(s)$ and consider the corresponding functions $\tilde{s}\in \Cf\big(\TT{k} E^\ast\big)$, and $\tilde{X} \in \Cf\big(\T^\ast E^k\big)$ on the dual vector bundles (cf.\ Remark~\ref{rem:lifts_pairings}). They are related by $\tilde{X} = \tilde{s}\circ \eps_k$, where $\eps_k\colon \T^\ast E^k \ra \TT{k} E^\ast$ is a (weighted) vector bundle morphism dual to~$\kappa^k$. Since~$\kappa^k$ is bi-homogeneous, so is~$\eps_k$, hence weights of $\tilde{X}$ and $\tilde{s}$ are the same, and our first assertion follows.

By definition, $\big(\sigma^k,\kappa^k\big)$ is a Lie HA if and only if it is skew and if $\kappa^k$ is an algebroidal relation between $\big(\TT{k}\sigma^1,\dd_{\TT{k}}\kappa^1\big)$ and $(\tau_{E^k},\kappa_{E^k})$. Proposition~\ref{prop:algebroidal_relation} provides a useful characterization of algebroidal relations. It follows that $\big(\sigma^k,\kappa^k\big)$ is a Lie HA if and only if for any sections $s,s'\in \Sec_{\TT{k} M}\big(\TT{k} E^1\big)$ we have (note that both $\big(\TT{k}\sigma^1,\dd_{\TT{k}}\kappa^1\big)$ and $(\tau_{E^k},\kappa_{E^k})$ are skew algebroids and their anchor maps are $\kappa^k_M\circ \TT{k}\rho^1$ and $\id_{\T E^k}$, respectively):
\begin{gather}\label{eqn:kappa_anchor}
\T \rho^k\big(\wh{ \kappa^k}(s)\big)=\kappa^k_M\big(\TT{k}\rho^1(s)\big) =\dd_{\TT k}\rho (s)
\end{gather} and
\begin{gather} \label{eqn:kapppa_bracket}
\wh{\kappa^k}([s,s']_{\dd_{\TT{k}}})=\big[\wh{\kappa^k}(s),\wh{\kappa^k}(s')\big]_{E^k}.
\end{gather}
The first of these conditions means that for any section $s\in \Sec\big(\T^k \sigma\big)$ the vector fields $\wh{\kappa^k}(s)\in \VF\big(E^k\big)$ and $\kappa^k_M \circ \T^k \rho^1 (s) = \dd_{\TT{k}}\rho(s) \in \VF\big(\T^k M\big)$ are $\rho^k$-related and it is equivalent to the commutativity of the diagram
\begin{gather*} \xymatrix{\TT{k} E^1\ar@{-|>}[rr]^{\kappa^k}\ar[d]^{\TT{k}\rho^1}&&\T E^k\ar[d]^{\T\rho^k}\\
\TT{k}\T M\ar[rr]^{\simeq\kappa^k_M}&& \T\TT{k}M,}\end{gather*} i.e., the axiom of an AL algebroid.
The second condition for lifts $s=s_1^{(k-\alpha)}$ and $s'=s_2^{(k-\beta)}$, with $s_1,s_2\in\Sec_M\big(E^1\big)$, gives equality \eqref{eqn:Lie_HA}.

Assume now that the equation \eqref{eqn:Lie_HA} holds and $\big(E^k,\kappa^k\big)$ is an almost-Lie higher algebroid. The latter condition implies~\eqref{eqn:kappa_anchor}, and from~\eqref{eqn:Lie_HA} it follows that equation \eqref{eqn:kapppa_bracket} holds for sections of the form $s=s_1^{(k-\alpha)}$ and $s'=s_2^{(k-\beta)}$. To prove that $\big(E^k,\kappa^k\big)$ is Lie, we need to show that~\eqref{eqn:kapppa_bracket} holds for any sections $s, s'\in \Sec\big(\T^k \sigma\big)$.

 Assuming that \eqref{eqn:kapppa_bracket} holds for given sections $s$ and $s'$, we shall show that it also holds for sections $s:=\phi s$ and $s'$ for any $\phi\in \Cf\big(\T^k M\big)$. Indeed, we have
\begin{gather*}\begin{split}&
\wh{\kappa^k}([\phi s,s']_{\dd_{\TT{k}}}) \overset{\eqref{eqn:bracket}}= \wh{\kappa^k}\big[\phi [ s,s']_{\dd_{\TT{k}}} - {\dd_{\TT{k}}}\rho(s')(\phi) s)\big] \\
& \hphantom{\wh{\kappa^k}([\phi s,s']_{\dd_{\TT{k}}})}{} \overset{\eqref{e:hat_r_axiom}}= \big(\rho^k\big)^\ast (\phi) \wh{\kappa^k}([s, s']_{\dd_{\TT{k}}}) -
\big(\rho^k\big)^\ast ({\dd_{\TT{k}}}\rho(s')(\phi))\wh{\kappa^k}(s) ,\end{split}	
\end{gather*}
while
\begin{gather*}
\big[\wh{\kappa^k}(\phi s),\wh{\kappa^k}(s')\big]_{E^k} \overset{\eqref{e:hat_r_axiom}}= \big[\big(\rho^k\big)^\ast (\phi)\wh{\kappa^k}(s),\wh{\kappa^k}(s')\big]_{E^k}\\
\hphantom{\big[\wh{\kappa^k}(\phi s),\wh{\kappa^k}(s')\big]_{E^k}}{} \overset{\eqref{eqn:bracket}}= \big(\rho^k\big)^\ast (\phi)\big[\wh{\kappa^k}(s),\wh{\kappa^k}(s')\big]_{E^k} - \wh{\kappa^k}(s')\big(\big(\rho^k\big)^\ast(\phi)\big) \wh{\kappa^k}(s)\\
\hphantom{\big[\wh{\kappa^k}(\phi s),\wh{\kappa^k}(s')\big]_{E^k}}{} = \big(\rho^k\big)^\ast (\phi)\big[\wh{\kappa^k}(s),\wh{\kappa^k}(s')\big]_{E^k} - \big(\rho^k\big)^\ast\big(\wh{\kappa^k}(s')(\phi)\big) \wh{\kappa^k}(s)\\
\hphantom{\big[\wh{\kappa^k}(\phi s),\wh{\kappa^k}(s')\big]_{E^k}}{} \overset{\eqref{eqn:kappa_anchor}}= \big(\rho^k\big)^\ast (\phi)\big[\wh{\kappa^k}(s),\wh{\kappa^k}(s')\big]_{E^k} - \big(\rho^k\big)^\ast(\dd_{\TT k}\rho(s')(\phi)) \wh{\kappa^k}(s) .
\end{gather*} Comparison of the above equalities proves our claim.
Now, since all $(k-\alpha)$-lifts (with $\alpha\geq 0$) of sections from $\Sec_M\big(E^1\big)$ span the whole space $\Sec_{\TT{k} M}\big(\TT{k} E^1\big)$ as the $\Cf\big(\TT k M\big)$-module, equation \eqref{eqn:kapppa_bracket} holds for every sections $s,s'\in \Sec_{\TT{k} M}\big(\TT{k} E^1\big)$. This ends the proof.
\end{proof}

\begin{rem} Note that $\Sec_{\TT{k} M, \leq0}\big(\TT{k} E\big)$ and $\X_{\leq0}\big(E^k\big)$ are $\Cf(M)$-modules of finite rank. It follows that the Lie axiom for HA can be reduced to a finite number of equations of the form~\eqref{eqn:Lie_HA} with $s_1=e_i$, $s_2=e_j$ where $(e_i)$ is a local basis of sections of~$\sigma_1$.
\end{rem}

{\bf Subalgebroids of Lie higher algebroids.} We shall end our considerations concerning the Lie axiom for HA by showing that it is inherited by subalgebroids. First we will need the following properties of higher tangent lifts.
\begin{prop}[properties of the lifted algebroid] \label{prop:sub_algebroidal_rel} Let $k$ be a positive integer. Consider vector bundles $\sigma\colon E\ra M$, $\sigma_1\colon E_1\ra M_1$ and $\sigma_2\colon E_2\ra M_2$ and their subbundles $\sigma'\colon E'\ra M'$, $\sigma'_1\colon E'_1\ra M'_1$ and $\sigma'_2\colon E'_2\ra M'_2$, respectively.
\begin{enumerate}[$(i)$]\itemsep=0pt
 \item \label{item:T^kr restricts_fine} Let
 $r\colon \sigma_1\relto \sigma_2$ be a \ZM
	\ that restricts fine to a \ZM\ $r'\colon \sigma_1'\relto \sigma_2'$. Then $\TT{k} r\colon \TT{k}\sigma_1\mbox{$\relto$}$ $\TT{k}\sigma_2$ is a \ZM\ which restricts fine to a \ZM\ $\TT{k} r'\colon \TT{k} \sigma_1'\relto\TT{k} \sigma_2'$.
 \item \label{item:T^kE_as_subalgeroid} Let $(\sigma,\kappa)$ be a $($Lie$)$ algebroid and $(\sigma',\kappa')$ its subalgebroid. Then the $\thh{k}$-tangent lift algebroid $\big(\TT{k}\sigma',\dd_{\TT{k}}\kappa'\big)$ is a subalgebroid of $\big(\TT{k}\sigma,\dd_{\TT{k}}\kappa\big)$.
 \item \label{item:T^kr_also_algebroidal} Let $(\sigma_1, \kappa_1)$ and $(\sigma_2,\kappa_2)$ be $($Lie$)$ algebroids and let $r\colon \sigma_1\relto \sigma_2$ be an algebroidal relation. Then $\TT{k} r\colon \TT{k} \sigma_1\relto \TT{k} \sigma_2$ is also an algebroidal relation between the $\thh{k}$-tangent lift algebroids $\big(\TT{k}\sigma_1,\dd_{\TT{k}}\kappa_1\big)$ and $\big(\TT{k}\sigma_2,\dd_{\TT{k}}\kappa_2\big)$.
\end{enumerate}
\end{prop}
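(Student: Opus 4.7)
The plan is to establish the three parts in the listed order, using (i) as the technical engine that feeds both (ii) and (iii).

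For part (i), I would proceed by induction on $k$, taking as base case the tangent fine-restriction result \cite[Theorem~A.5]{MJ_MR_models_high_alg_2015} already invoked in the proof of Proposition~\ref{prop:sub_r_also_algebroidal}. For the inductive step the natural holonomic embedding $\TT{k+1}E\hookrightarrow \T\TT{k}E$ is the key device: it realizes $\TT{k+1}r$ as the pullback of $\T(\TT{k}r)$ along the inclusions $\TT{k+1}E_j\hookrightarrow \T\TT{k}E_j$. Applying the inductive hypothesis to $r,r'$ shows that $\TT{k}r$ restricts fine to $\TT{k}r'$; the base case then applied to the \ZM\ $\TT{k}r$ between the pairs $(\TT{k}E_j,\TT{k}E_j')$ yields fine restriction of $\T(\TT{k}r)$ to $\T(\TT{k}r')$; intersecting with the holonomic loci delivers the desired statement. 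Alternatively, a direct coordinate verification is possible: choose adapted graded coordinates in which $E_j',M_j'$ are cut out by vanishing of certain coordinates, write $r$ in the form~\eqref{eqn:kappa_local}-style parametrization, and observe that the lifted equations defining $\TT{k}r'$ inside $\TT{k}r$ are obtained by lifting the defining equations of $r'$ inside $r$, since $\TT{k}$ transforms equations by the Leibniz/polynomial rule, which preserves the property of being identically satisfied along a given subvariety.

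Part (ii) reduces to (i) via the characterization of subalgebroids given by Proposition~\ref{p:subalgebroid}: $(\sigma',\kappa')$ is a subalgebroid of $(\sigma,\kappa)$ precisely when $\kappa$ restricts fine to $\T\sigma'\times\tau_{E'}$. By Definition~\ref{def:h_t_lift_algebroid}, the lifted relation $\dd_{\TT{k}}\kappa$ is obtained from $\TT{k}\kappa$ by pre- and post-composition with the canonical double-bundle isomorphisms $\kappa_{k,E}$ appearing in diagram~\eqref{diag:kappa^Tk_E}. These isomorphisms are natural with respect to the inclusion $E'\hookrightarrow E$, so they intertwine $\dd_{\TT{k}}\kappa$ with $\dd_{\TT{k}}\kappa'$ on the nose when the latter is available, and in particular fine restriction is transported through them. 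Combining this naturality with part (i) applied to the \ZM\ $\kappa$ yields that $\dd_{\TT{k}}\kappa$ restricts fine to $\dd_{\TT{k}}\kappa'$, which by Proposition~\ref{p:subalgebroid} is exactly the assertion that $(\TT{k}\sigma',\dd_{\TT{k}}\kappa')$ is a subalgebroid of $(\TT{k}\sigma,\dd_{\TT{k}}\kappa)$.

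Part (iii) is then a short formal consequence. By Definition~\ref{df:alg_relation}, the algebroidal relation $r$ is, as a subset of $E_1\times E_2$, a subalgebroid of the product $(\sigma_1\times\sigma_2,\kappa_1\times\kappa_2)$. Applying part (ii) to this subalgebroid shows that $\TT{k}r$, viewed as a submanifold of $\TT{k}(E_1\times E_2)\cong \TT{k}E_1\times\TT{k}E_2$, is a subalgebroid of the $\thh{k}$-tangent lift of the product. Functoriality of $\TT{k}$ with respect to products, combined once more with the naturality of the identifications $\kappa_{k,E_1\times E_2}\simeq \kappa_{k,E_1}\times\kappa_{k,E_2}$, gives the compatibility $\dd_{\TT{k}}(\kappa_1\times\kappa_2)=\dd_{\TT{k}}\kappa_1\times\dd_{\TT{k}}\kappa_2$ under this identification, so $\TT{k}r$ is a subalgebroid of the product of the two lifted algebroids, i.e., an algebroidal relation. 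The main obstacle I expect is in part (i): the fine restriction property is pointwise and fiberwise in nature, so showing that it survives the passage to $k$-jets requires either the inductive holonomic argument or a careful coordinate book-keeping — once this is in place, parts (ii) and (iii) fall out mechanically.
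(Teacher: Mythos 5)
Your proposal is correct, and for parts (ii) and (iii) it coincides with the paper's argument: (ii) is obtained by applying (i) to $\kappa$ (using the characterization of subalgebroids via fine restriction, Proposition~\ref{p:subalgebroid}) and transporting the fine restriction through the canonical isomorphisms $\kappa^k_E$, and (iii) follows by applying (ii) to $r$ viewed as a subalgebroid of the product and identifying $\dd_{\TT{k}}(\kappa_1\times\kappa_2)$ with $\dd_{\TT{k}}\kappa_1\times\dd_{\TT{k}}\kappa_2$. The only genuine divergence is in the engine, part (i): the paper gives a direct, induction-free argument -- a $k$-jet $\und{v}^k_2\in\TT{k}M_2'$ and a jet $v^k_1\in\TT{k}E_1'$ over $\und{r}(\und{v}^k_2)$ are represented by curves $\und{\gamma}_2(t)\in M_2'$ and $\gamma_1(t)\in E_1'$, and $\big(\TT{k}r\big)_{\und{v}^k_2}(v^k_1)=\tclass{k}{\,r_{\und{\gamma}_2(t)}(\gamma_1(t))}$ lies in $\TT{k}E_2'$ because the fiberwise statement holds for each $t$ -- whereas you induct on $k$, feeding the first-order case of \cite[Theorem~A.5]{MJ_MR_models_high_alg_2015} through the holonomic inclusion $\TT{k+1}E\hookrightarrow\T\TT{k}E$. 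Your route is sound, but it silently uses two compatibility facts that should be spelled out: the inclusion $\TT{k+1}r\subset\T\TT{k}r$ (naturality of the holonomic embedding applied to the submanifold $r\subset E_1\times E_2$) and the identity $\TT{k+1}E_2'=\T\TT{k}E_2'\cap\TT{k+1}E_2$ for the submanifold $E_2'\subset E_2$, which is what lets you ``intersect with the holonomic loci''; both are standard and can be checked in adapted coordinates, but they are exactly where the work hides. The paper's curve-representative argument buys a one-paragraph proof with no induction and no such bookkeeping, while your approach has the advantage of reusing the already published first-order result as a black box (and your coordinate alternative is essentially a verification of the same compatibility in local form).
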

The proof is given in Appendix~\ref{sec:app}.

 \begin{prop}[the Lie axiom and subalgebroids] \label{prop:sub_HA_is_Lie} A higher subalgebroid of a Lie $($resp.\ almost-Lie$)$ higher algebroid is Lie $($resp.\ almost-Lie$)$.
\end{prop}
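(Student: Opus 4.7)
Let $({\sigma'}^k\colon E'^k \to M', {\kappa'}^k)$ denote a higher subalgebroid, with ${\kappa'}^k$ the fine restriction of $\kappa^k$ to $\TT{k}{\sigma'}^1 \times \tau_{E'^k}$ as in Definition~\ref{def:higher_sub_algebroid}. The strategy is to treat the AL and Lie cases separately by restricting the corresponding $\catZM$-morphism (resp.\ algebroidal relation) that certifies the axiom for $\kappa^k$. Skewness passes to the subalgebroid automatically: reducing to order one via Proposition~\ref{prop:red_h_to_l} yields ${\kappa'}^1$ as the fine restriction of $\kappa^1$, which by Proposition~\ref{p:subalgebroid} makes $({\sigma'}^1, {\kappa'}^1)$ a subalgebroid of the skew algebroid $(\sigma^1, \kappa^1)$. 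Intersecting the symmetric set $\kappa^1$ with the ``symmetric'' box $\T E'^1 \times \T E'^1$ gives a symmetric ${\kappa'}^1$, so skewness follows from Lemma~\ref{lem:char_kappa}\eqref{item:kappa_skew}.

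For the AL case, Proposition~\ref{p:ZM-morphism} rephrases the AL axiom for $\kappa^k$ as the inclusion $(\TT{k}\rho^1 \times \T\rho^k)(\kappa^k) \subset \kappa^k_M$. Since ${\kappa'}^k \subset \kappa^k$ and the fine restriction condition forces ${\rho'}^k := \rho^k|_{E'^k}$ to land in $\TT{k}M' \subset \TT{k}M$, applying $\TT{k}{\rho'}^1 \times \T{\rho'}^k$ to ${\kappa'}^k$ produces a subset of $\kappa^k_M \cap (\TT{k}\T M' \times \T\TT{k}M')$. Naturality of the canonical flip $\kappa^k_{(-)}$ with respect to the embedding $\iota\colon M' \hookrightarrow M$ (i.e., $\T\TT{k}\iota \circ \kappa^k_{M'} = \kappa^k_M \circ \TT{k}\T\iota$) identifies this intersection with $\kappa^k_{M'}$, and applying Proposition~\ref{p:ZM-morphism} once more delivers the AL axiom for ${\kappa'}^k$.

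For the Lie case, by Proposition~\ref{prop:sub_algebroidal_rel}\eqref{item:T^kE_as_subalgeroid} the lifted algebroid $(\TT{k}{\sigma'}^1, \dd_{\TT{k}}{\kappa'}^1)$ is a subalgebroid of $(\TT{k}\sigma^1, \dd_{\TT{k}}\kappa^1)$, while the tangent Lie algebroid $(\tau_{E'^k}, \kappa_{E'^k})$ is classically a subalgebroid of $(\tau_{E^k}, \kappa_{E^k})$ because $E'^k$ embeds into $E^k$. Hence the product $(\TT{k}{\sigma'}^1 \times \tau_{E'^k}, \dd_{\TT{k}}{\kappa'}^1 \times \kappa_{E'^k})$ sits as a subalgebroid of the corresponding unprimed product algebroid. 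The Lie axiom for $(\sigma^k, \kappa^k)$ states precisely that $\kappa^k$ is an algebroidal relation between these products, and by assumption it restricts fine to ${\kappa'}^k$. Proposition~\ref{prop:sub_r_also_algebroidal} then immediately yields that ${\kappa'}^k$ is algebroidal, which is the Lie axiom for $({\sigma'}^k, {\kappa'}^k)$.

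The principal technical point is the naturality identity $\kappa^k_M \cap (\TT{k}\T M' \times \T\TT{k}M') = \kappa^k_{M'}$ needed in the AL case; this reduces to the functoriality of $\TT{k}$ together with the local coordinate formula for $\kappa^k_M$ that generalizes Example~\ref{ex:tang_alg}. With that identity in hand, both halves of the proposition are formal consequences of the machinery developed in Sections~\ref{sec:algebroids_as_zm} and~\ref{ssec:higher_lie}, in particular Propositions~\ref{p:subalgebroid}, \ref{p:ZM-morphism}, \ref{prop:sub_r_also_algebroidal}, and~\ref{prop:sub_algebroidal_rel}.
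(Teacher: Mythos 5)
Your proposal is correct and follows essentially the same route as the paper: the almost-Lie case by combining ${\kappa'}^k\subset\kappa^k$ with the fact that the anchors of the subalgebroid are restrictions of $\rho^1$, $\rho^k$ (the paper hides your naturality identity for $\kappa^k_M$ in ``a simple observation''), and the Lie case by invoking Proposition~\ref{prop:sub_algebroidal_rel}\eqref{item:T^kE_as_subalgeroid} together with the restriction result for algebroidal relations (Proposition~\ref{prop:sub_r_also_algebroidal}). Your explicit handling of skewness and of the identity $\kappa^k_M\cap(\TT{k}\T M'\times\T\TT{k}M')=\kappa^k_{M'}$ only spells out steps the paper leaves implicit.
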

\begin{proof} Assume that $\big(\sigma^k_F\colon F^k\ra N, \kappa^{k,F}\big)$ is a higher subalgebroid of an AL higher algebroid $\big(\sigma^k_E\colon E^k\ra M, \kappa^{k,E}\big)$. To prove that $\big(\sigma^k_F, \kappa^{k,F}\big)$ is also AL we must verify that if two elements $X\in \TT{k} F^1$ and $Y\in \T F^k$ are $\kappa^{k,F}$-related then their images $\TT{k} \rho^1_F (X)$ and $\T\rho^k_F(Y)$ are $\kappa^k_{ N}$-related. Now as $\kappa^{k,F}$ is a fine restriction of $\kappa^{k,E}$, elements~$X$ and~$Y$ as above are $\kappa^{k,E}$-related, and thus, since $\big(\sigma^k_E\colon E^k\ra M, \kappa^{k,E}\big)$ was almost-Lie, elements $\TT{k} \rho^1_E (X)$ and $\T\rho^k_E(Y)$ are $\kappa^k_M$-related. The assertion follows from a~simple observation that the anchor { maps} $\rho^k_F\colon F^k\ra \TT{k} N$ and $\rho^1\colon F^1\ra \T M$ are the restrictions $\rho^k_F=\rho^k_E|_{F^k}$ and $\rho^1_F=\rho^1_E|_{F^1}$, respectively.

Now assume that $\big(\sigma^k_E, \kappa^{k,E}\big)$ is Lie. This means that $\kappa^{k,E}$ is an algebroidal relation between $\TT{k}\sigma^1_E$ and $\tau_{E^k}$. Moreover, by definition, $ \kappa^{k,F}$ is a fine restriction of $\kappa^{k,E}$. In particular, $\big(\sigma^1_E, \kappa^{1, F}\big)$ is a subalgebroid of $\big(\sigma^1_E, \kappa^{1, E}\big)$, hence by Proposition~\ref{prop:sub_algebroidal_rel}\eqref{item:T^kE_as_subalgeroid}, $\TT{k}\sigma^1_F$ is a subalgebroid of $\TT{k}\sigma^1_E$. Note also that $\tau_{F^k}$ is a subalgebroid of $\tau_{E^k}$. Thus, by Proposition~\ref{prop:sub_algebroidal_rel}\eqref{item:T^kr_also_algebroidal}, $\kappa^{k,F}$ is also an algebroidal relation, i.e., $\big(\sigma^k_F\colon F^k\ra N, \kappa^{k,F}\big)$ is a Lie higher algebroid.
\end{proof}

\subsection{Prolongations of an AL algebroid}\label{ssec:prolongations}
In our previous publication \cite{MJ_MR_models_high_alg_2015} we studied a class of objects crafted to play the role of prototypes of (Lie) higher algebroids. Our construction was motivated by the procedure of reduction of a~higher tangent bundle of a~Lie groupoid. In fact, as we shall see shortly, these objects provide natural examples of (Lie) higher algebroids in the sense of Definition~\ref{def:higher_algebroid}. Let us now recall their construction.

{\bf Construction of the prolongations.} Let $(\sigma\colon E\ra M, \kappa)$ be an AL algebroid. For each $k=1,2,\hdots$, we will construct bundles $\sigma^{[k]}\colon \E{k}\ra M$ and relations $ \kappa^{[k]}\subset\TT{k} E\times \T \E k$. We shall call the pair $\big(\sigma^{[k]}, \kappa^{[k]}\big)$ the \emph{$\thh{k}$-prolongation of the algebroid $(\sigma,\kappa)$}. The total spaces of bundles $\sigma^{[k]}$ are defined inductively:
\begin{gather*}
\E{1}:= E, \qquad \E{2}:=\{A\in \T E\colon \T\sigma(A)=\rho\circ\tau_E(A)\},\\
\E{k+1}:=\T \E{k}\cap\TT{k} E, \qquad \text{for}\quad k\geq 2,
\end{gather*}
where $\rho\colon E\ra \T M$ is the anchor map of $(\sigma,\kappa)$. By the inductive hypothesis, $\E{k}$ is considered, as a subset of $\TT{k-1}E$, hence both $\T \E{k}$ and $\TT{k} E$ can be understood here as subsets of $\T \TT{k-1} E$, and $\sigma^{[k]}$ is simply the projection $\E k\subset\T\TT{k-1}E\ra E\overset{\sigma}\ra M$. (Recall that for any manifold $M$, $\TT{k} M$ can be consider a subset of $\T \TT{k-1} M$ in a natural way.)

The bundle $\sigma^{[k]}\colon \E{k}\ra M$ has a very interesting algebraic structure encoded in a relation~$\kappa^{[k]}$ which can be defined directly as
 $\kappa^{[1]}=\kappa$, $\kappa^{[k]}=\big(\kappa^{k-1}_E\circ\TT{k-1}\kappa\big)\cap\big(\TT{k} E\times\T \E{k}\big)$ (see \cite[Proposition~4.6]{MJ_MR_models_high_alg_2015}):
\begin{gather}\label{eqn:diag_prolong_kappa}\begin{split}&
\xymatrix{
 \TT{k-1}\T E\ar@{-|>}[rr]^{\TT{k-1}\kappa}&& \TT {k-1}\T E \ar[rr]^{\kappa^{k-1}_E}&& \T\TT{k-1}E \\
\TT {k} E\ar@{_{(}->}[u]\ar@{--|>}[rrrr]^{\kappa^{[k]}}&&&& \T \E{k}.\ar@{_{(}->}[u]}\end{split}
\end{gather}
Since $\kappa$ is a \ZM, it is easy to see that so is $\kappa^{k-1}_E\circ\TT{k-1}\kappa$. We stress that the fact that the latter restricts fine to a \ZM\ from $\TT {k} E$ to $\T \E{k}$ is a non-trivial result which strongly depends on the fact that $\sigma$ has an AL algebroid structure.

In particular, if we start from the tangent algebroid $(\sigma,\kappa)=(\tau_M,\kappa_M)$, then $\sigma^{[k]}$ is just the higher tangent bundle $\tau^k_M\colon \TT{k}M\ra M$ and $\kappa^{[k]}$ is the canonical isomorphism $ \kappa^k_M$. In general, $\sigma^{[k]}\colon \E{k}\ra M$ is a \grB\ of rank $(r, r, \ldots, r)$ where $r$ is the rank of $E$ (see \cite[Theorem~4.5]{MJ_MR_models_high_alg_2015}). If $(\sigma, \kappa)$ is the Lie algebroid of a Lie groupoid $\mathcal{G}$ (i.e., it is a reduction of the tangent algebroid $\T\mathcal{G}$ by the action of $\mathcal{G}$), then its $\thh{k}$-prolongation $\A^k(\G)$ can be interpreted as a~reduction of the higher tangent bundle $\TT{k}\mathcal{G}$ by the action of $\G$. More precisely, $\A^k(\G) =\TT k \G^\alpha\big|_{M}$, where $M$ is the base of $\G$ and $\alpha\colon \G\ra M$ denotes the source map~\cite{MJ_MR_models_high_alg_2015}. That is, bundle~$\A^k(\G)$ consists of $k$-jets of curves in $\G$ which are tangent to the $\alpha$-fibres at the base.

To our best knowledge the $\thh{k}$-order prolongations of Lie algebroids were for the first time studied by Saunders~\cite{Saunders_2004}, who introduced bundles $\A^k(\G)$ discussed above. He also proposed an abstract construction of the second prolongation of a Lie algebroid which is not necessarily integrable. Later Colombo and De Diego~\cite{Col_deDiego_seminar_2011} generalized his ideas defining bundle $\E {k+1}$ (under the name of \emph{higher order Lie algebroid}) as the space of $\thh{k}$-tangent lifts of admissible curves in a~Lie algebroid $E$. Their construction was later adopted by Martinez~\cite{Martinez_2015} to develop higher-order variational calculus on Lie algebroids. Later in Remark~\ref{rem:var_calc} we briefly discuss his and other applications of the notion of a prolongation in variational calculus.

Let us remark that in all the above-cited sources prolongations are constructed out of Lie algebroids, whereas our results from \cite{MJ_MR_models_high_alg_2015} recognize the almost-Lie class as fundamental to define a prolongation. Moreover, in all these sources prolongations are understood only as bundles $\sigma^{[k]}\colon \E{k}\ra M$, and the prominent role of the \ZM\ $\kappa^{[k]}$, being the generalization of the algebroid bracket operation, remains unrecognized.

{\bf Properties of the prolongations.} Prolongations of AL algebroids provide an example of strong AL higher algebroids. Moreover, prolongations of Lie algebroids are Lie higher algebroids.
\begin{prop}[properties of the prolongations]\label{prop:E_k_as_AL_HA}
Let $(\sigma\colon E\ra M,\kappa)$ be a Lie $($resp.\ almost-Lie$)$ algebroid. Then $\big(\sigma^{[k]}\colon \E{k}\ra M, \kappa^{[k]}\big)$ is an example of a strong Lie $($resp.\ almost-Lie$)$ higher algebroid.
\end{prop}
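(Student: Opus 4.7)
The plan is to proceed by induction on $k$, with the trivial base case $k=1$. For the inductive step, four requirements of Definition~\ref{def:higher_algebroid} must be checked: that $\big(\sigma^{[k]},\kappa^{[k]}\big)$ is a general HA, that it is strong, that it inherits the AL property, and, under the Lie hypothesis, that it inherits the Lie property. The key geometric input, recorded in diagram~\eqref{eqn:diag_prolong_kappa}, is that $\kappa^{[k]}$ is by construction the fine restriction of the ambient weighted \ZM\ $\kappa^{k-1}_E\circ\TT{k-1}\kappa$ to $\TT{k}E\times\T\E{k}$; existence of this fine restriction was established in \cite{MJ_MR_models_high_alg_2015} and relies crucially on the AL axiom for $\kappa$.

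The ambient relation is weighted because $\TT{k-1}\kappa$ is a higher tangent lift of a weighted \ZM\ and $\kappa^{k-1}_E$ is a graded-linear isomorphism; the restriction inherits this property since both $\TT{k}E\hookrightarrow\TT{k-1}\T E$ and $\T\E{k}\hookrightarrow\T\TT{k-1}E$ are graded embeddings, and its order-one reduction is $\kappa$ by construction, establishing the general HA structure. For the strong property, Proposition~\ref{p:core_gr_lin_bndl} identifies $\Core(\TT{k}E)$ and $\Core(\T\E{k})$ as graded direct sums of $k$ vector bundles over $M$, each of rank $r=\rank(E)$, using the rank formula $(r,\ldots,r)$ for $\E{j}$ from \cite{MJ_MR_models_high_alg_2015}; tracing the defining composition of $\kappa^{[k]}$ on each graded summand gives the required isomorphism. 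For the AL axiom, Proposition~\ref{p:ZM-morphism} reduces the task to showing that $\kappa^{[k]}$-related $X\in\TT{k}E$ and $Y\in\T\E{k}$ have $\kappa^k_M$-related images $\TT{k}\rho(X)$ and $\T\rho^k(Y)$; this follows from the AL property of $(\sigma,\kappa)$ recorded in Lemma~\ref{lem:char_kappa}, applied after the functor $\TT{k-1}$, combined with the naturality of the canonical interchanges $\kappa^{k-1}_E$ and $\kappa^{k-1}_M$.

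The Lie case is treated via Proposition~\ref{prop:sub_r_also_algebroidal}. The ambient relation $\kappa^{k-1}_E\circ\TT{k-1}\kappa$ is algebroidal: $\TT{k-1}\kappa$ is algebroidal by Proposition~\ref{prop:sub_algebroidal_rel}, while $\kappa^{k-1}_E$ is a Lie algebroid isomorphism by the very definition of $\dd_{\TT{k-1}}\kappa$ in~\eqref{diag:kappa^Tk_E}, and compositions of algebroidal relations remain algebroidal. Proposition~\ref{prop:sub_r_also_algebroidal} then yields the Lie conclusion for $\kappa^{[k]}$ once we know that $\TT{k}E\subset\TT{k-1}\T E$ and $\T\E{k}\subset\T\TT{k-1}E$ are subalgebroids of the respective ambient Lie algebroids; the second inclusion is automatic since tangent bundles of submanifolds are Lie subalgebroids of the tangent Lie algebroid.

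The main obstacle is the first inclusion, namely showing that $\TT{k}E\subset\TT{k-1}\T E$ is a subalgebroid of the iterated tangent Lie algebroid $\big(\TT{k-1}\T\sigma, \dd_{\TT{k-1}}(\dd_\T\kappa)\big)$, with induced substructure $\big(\TT{k}\sigma, \dd_{\TT{k}}\kappa\big)$. This reduces, via the classical case $E=M$ in which $\TT{k}M\subset\TT{k-1}\T M$ is the standard subalgebroid of integrable jets, to a local verification using coordinates of the form~\eqref{eqn:kappa_local}; the AL axiom is precisely what guarantees compatibility of the two gradings. Once this step is completed, the Lie conclusion follows and the induction closes.
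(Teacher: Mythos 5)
Your skeleton is the same as the paper's: realize $\kappa^{[k]}$ as the fine restriction of $\kappa^{k-1}_E\circ\TT{k-1}\kappa$ from diagram~\eqref{eqn:diag_prolong_kappa}, check weightedness by grading arguments, get AL from the AL square for $\kappa$ and naturality of the canonical flips, get Lie from Propositions~\ref{prop:sub_algebroidal_rel} and~\ref{prop:sub_r_also_algebroidal}, and handle ``strong'' on the cores. The difficulty is that each of the three hard verifications is asserted rather than carried out, and in the Lie case you say so yourself. You correctly observe that Proposition~\ref{prop:sub_r_also_algebroidal} requires $\TT{k}E\subset\TT{k-1}\T E$, with the induced structure $\dd_{\TT{k}}\kappa$, to be a subalgebroid of $\big(\TT{k-1}\T\sigma,\dd_{\TT{k-1}}\dd_\T\kappa\big)$, you call this ``the main obstacle'', and then you do not prove it. The proposed reduction ``via the classical case $E=M$'' cannot work: for $E=M$ the bracket structure functions $Q^i_{jk}$ are invisible, while the subalgebroid condition of Definition~\ref{def:sublagebroid} is exactly a condition on anchors and brackets; likewise ``the AL axiom guarantees compatibility of the two gradings'' is beside the point, since gradings are not what Definition~\ref{def:sublagebroid} asks about. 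To actually close this step you could test condition $(ii)$ of Definition~\ref{def:sublagebroid} on the lifts $(\T s)^{(k-1-\alpha)}$, $s\in\Sec_M(E)$, using the explicit bracket formula~\eqref{p:algebroid_Tk_E_item_bracket_formula}, or argue by naturality of the lifted structures under the Weil-algebra inclusion $\D_k\hookrightarrow\D_{k-1}\otimes\D_1$. The paper avoids rebuilding this machinery by leaning on the structural results of \cite{MJ_MR_models_high_alg_2015} (Theorem~4.5 and Proposition~4.6) at precisely this point; in your self-contained version the step remains open, so the induction does not in fact ``close''.

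The other two parts are also thinner than what is needed. For ``strong'', ``tracing the defining composition on each graded summand'' is the whole content of the claim, not a proof; the paper's mechanism is an induction in which the order-$(k-1)$ reduction of $\kappa^{[k]}$ is $\kappa^{[k-1]}$, so only the ultracore must be examined, and there all four ingredients (the inclusions $\TT{k}E\subset\TT{k-1}\T E$ and $\T\E{k}\subset\T\TT{k-1}E$, the relation $\TT{k-1}\kappa$, and the flip $\kappa^{k-1}_E$) act as the identity. For weightedness, the point you gloss over with ``graded embeddings'' is that $\kappa$ interchanges the two homogeneity structures on $\T E$, so one must identify the \degree-$k$ structures on $\TT{k}E\ra E$ and $\T\E{k}\ra\T M$ with the specific sums of weight vector fields $\TT{k-1}\Delta^1_E+\Delta^{k-1}_{\T E}$ and $\T\TT{k-1}\Delta_E+\T\Delta^{k-1}_E$ and verify that both $\TT{k-1}\kappa$ and $\kappa^{k-1}_E$ are homogeneous with respect to exactly these combinations; that bookkeeping is the substance of the paper's argument, not a formality. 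Your AL step (apply $\TT{k-1}$ to the square~\eqref{eqn:kappa_AL} and use naturality of $\kappa^{k-1}_E$, $\kappa^{k-1}_M$) is sound in outline and is essentially what the citation in the paper encapsulates.
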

The proof is given in Appendix~\ref{sec:app}.

\begin{ex}[a local form of a second order prolongation] \label{ex:prolong_2}
Let $\sigma\colon E\ra M$ be a vector bundle with local coordinates $(x^a,y^i)$ and consider an AL algebroid $(\sigma,\kappa)$ given locally by structure function $Q^b_i(x)=\wt Q^b_i(x)$ and $Q^i_{jk}(x)$ (cf.\ Remark~\ref{rem:local_kappa_a}). On $\TT 2 E$ consider induced coordinates $(x^a,y^i,\dot x^b,\dot y^j,\ddot x^c,\ddot y^k)$, and coordinates $\big(x^a,\und{y}^i,\und{y}^{j,(1)},\dot{\und{x}}^b,\dot{\und{y}}^k,\dot{\und{y}}^{l,(1)}\big)$ on $\T \E 2$. The inductive formula~\eqref{eqn:diag_prolong_kappa} allows to calculate the expression for $\kappa^{[2]}\colon \TT 2 E\relto \T \E 2$ (recall a general local form of a second-order algebroid given in Remark~\ref{rem:local_order_2}) which reads as
\begin{gather}\label{eqn:kappa_prolong_2}
\kappa^{[2]}\colon
\begin{cases}
\dot{x}^a = Q^a_i(x) \und{y}^i, \\
\ddot{x}^a = \dfrac{\pa Q^a_i}{\pa x^b}(x)Q^b_j(x)\und{y}^i\und{y}^j + Q^a_i(x) \und{y}^{i,(1)}, \\
\dot{\und{x}}^a = Q^{a}_i(x)y^i,\\
\dot{\und{y}}^i = \dot{y}^i + Q^i_{jk}(x) \und{y}^j y^k,\\
\dot{\und{y}}^{i,(1)} = \ddot{y}^i + Q^i_{jk}(x)\und{y}^j \dot{y}^k +Q^i_{jk}(x)\dot{\und{y}}^j {y}^k+ \dfrac{\pa Q^i_{jk}}{\pa x^a}(x)Q^a_l(x)\und{y}^l\und{y}^j {y}^k .
\end{cases}
\end{gather}
Observe that the equation for $\ddot{x}^i$ can be obtained from the formula $\dot{x}^a = Q^a_i(x) \und{y}^i$ by a formal derivation, assuming that $(\und{y}^i)^{\dot{}}=\und{y}^{i,(1)}$. In a similar manner, formula $\dot{\und{y}}^{i,(1)}$ can be derived from the equation $\dot{\und{y}}^i = \dot{y}^i + Q^i_{jk}(x) \und{y}^j y^k$. Thus it is instructive to think about $\kappa^{[2]}$ as of first-order differential consequences of $\kappa=\kappa^{[1]}$. We will use this property later in Section~\ref{ssec:ex_var_calc}.
\end{ex}

\section{Variational calculus on (Lie) higher algebroids}\label{sec:var_calc}

In this section we shall discuss a geometric formalism of the $\thh{k}$-order variational calculus based on the notion of a (Lie) higher algebroid $\big(\sigma^k\colon E^k\ra M,\kappa^k\big)$. Actually the geometry here is exactly the same as in our previous study on prolongations of AL algebroids~\cite{MJ_MR_prototypes}. The idea is that the relation~$\kappa^k$ is responsible for the construction of admissible variations of potential extremals of the system (the role of $\kappa$ is to change a `jet of curves' into a `curve of jets'~-- cf.\ our discussion in Section~\ref{sec:intro}). By dualizing $\kappa^k$ we may represent the variation of an action functional as a pairing of a certain curve of covectors with the $\thh{k}$-tangent lift of the curve of virtual displacements. Then all that is left to do is to perform an integration by parts according to the recipe from~\cite{MJ_MR_higher_var_calc_2013}.

Actually, it is straightforward to observe that the geometric formalism described above works well for even more general structures which we shall call \prealg s. We believe that they may have some potential applications in the theory of reductions. In fact, in the last paragraph of Section~\ref{ssec:var_calc} we provide a rather trivial example of that sort. From the point of view of complexity it is not interesting, yet still it is an example of a non-standard reduction~-- cf.\ Remark~\ref{rem:pre_alg}. On the other hand, more complex examples including the derivation of higher-order Euler--Poincar\'e and Hummel equations within our formalism can be found in \cite{MJ_MR_prototypes}.

\subsection{The formalism of variational calculus}\label{ssec:var_calc}

{\bf \Prealg s.}
\begin{df}[\prealg]\label{def:pre_algebroid} By a \emph{\prealg\ of order $k$} we understand a triple $\big(\sigma^k,\tau, \kappa^k\big)$ consisting of a \grB\ $\sigma^k\colon E^k\ra M$ of \degree\ $k$, a vector bundle $\tau\colon F\ra M$ (over the same base), and a weighted \ZM\ $\kappa^k\colon \TT{k}\tau\relto \tau_{E^k}$ inducing the identity on~$M$:
\begin{gather}\label{diag:pre_alg}\begin{split}&
\xymatrix{\TT{k}F\ar[d]^{\TT{k}\tau} \ar@{-|>}[rr]^{\kappa^k}&&\T E^k\ar[d]^{\tau_{E^k}}\\
\TT{k}M &&\ar[ll]_{\rho^k} E^k.}\end{split}
\end{gather}
The reduction of the relation $\kappa^k$ to order zero defines a vector bundle morphism $\rho^F = \kappa^0\colon F\ra \T M$. We call a \prealg\ $\big(\sigma^k, \tau, \kappa^k\big)$ \emph{almost-Lie} if $\big(\TT{k}\rho^F, \T \rho^k\big)\colon \kappa^k\mZM \kappa^k_M$ is a $\catZM$-morphism.
\end{df}
The difference with Definition \ref{def:higher_algebroid} is that we allow the weighted \ZM\ $\kappa^k$ to match the tangent bundle $\tau_{E^k}\colon \T E^k\ra E^k$ with the lifted bundle $\TT{k}\tau\colon \TT{k} F\ra\TT{k}M$ of an arbitrary VB $\tau\colon F\rightarrow M$, a priori not related with $\sigma^k$. By taking $\tau=\sigma^1$ and assuming that $\kappa^1$ gives the identity on the core bundles we recover the definition of a general HA. \Prealg s of order one have an elegant description in terms of a certain bracket operation. We discuss it briefly in the concluding Section~\ref{sec:final}.

It is easy to see that the construction of the algebroid lift can be straightforwardly extended to \prealg s. This time to a section $s\in\Sec_M(F)$ of $\tau$ and a number $\alpha=0,1,\hdots,k$ we assign a vector field $s^{[k-\alpha]}:=\wh{\kappa^k}\big(s^{(k-\alpha)}\big)\in\X\big(E^k\big)$.

\begin{rem}\label{rem:pre_alg}We believe that examples of \prealg s may appear in the theory of reductions. Let $G$ be a Lie group (or more generally a Lie groupoid). In the standard reduction of a~$\thh{k}$-order variational problem on $G$ one divides the higher tangent bundle $\TT{k}G$ by the natural action of $G$. In fact, here $G$ acts on curves in~$G$ by, say, left multiplication and the action on~$\TT{k}G$ is the differential consequence of this action on $\thh{k}$-jets. However, one can consider more general actions by a subgroup $H<\TT{k} G$ which properly contains $G$, and thus which would not be induced by the action of~$G$ on curves. In proper circumstances, the quotient $\TT{k} G/H$ will have a structure of a \grB\ (space)~$E^k$. However, in general the knowledge of~$E^1$ may not be enough to define admissible variations, if for example~$E^1$ is trivial but the entire~$E^k$ is not. Thus a reduction of $\kappa_G^k$ (if it can be properly defined) may lead to a natural example of a \prealg .

A very simple example in this direction is provided in the last paragraph of this subsection.
\end{rem}

{\bf Admissible paths and admissible variations.} Consider now a \prealg\ $\big(\sigma^k,\tau,\kappa^k\big)$. Let $\gamma\colon \R\ra E^k$ be a curve over $\und{\gamma}:=\sigma^k( \gamma)\colon \R\ra M$. Choose another curve $a\colon \R\ra F$ and consider its $\thh{k}$-order tangent lift $\tclass{k}{a}\colon \R\ra \TT{k} F$. Along $\gamma$ we would like to construct a vector field $\del_a\gamma$ by the formula
\begin{gather}\label{eqn:adm_variation}
 \delta_a\gamma (t):=\big(\kappa^k\big)_{\gamma(t)}\big( \tclass{k}{a}(t)\big)\in \T_{\gamma (t)}E^k.
\end{gather}
Note that, by the properties of $\kappa^k$, this definition is correct if and only if $\gamma$ and $a$ share the same base path, i.e., $\sigma^k(\gamma)=\und{\gamma}=\tau(a)$ and if $\tclass{k}{\und{\gamma}}=\rho^k(\gamma)$. The latter equation is known as the (left) \emph{admissibility} of $\gamma$. If these conditions hold, we call $\delta_a\gamma$ an \emph{admissible variation} along $\gamma$ and $a$ its \emph{generator} or a \emph{virtual displacement}. The set of all admissible curves on the considered \prealg\ will be denoted by $\Adm\big(E^k\big)$, while the set of all admissible variations by $\VA\big(E^k\big)$.

Let us explain a relation between the notions of admissible variations and \emph{variations} as understood in the standard variational calculus. In a general setting, a variation $\delta \gamma$ of an $E^k$-valued path $\gamma$ is a vector field along $\gamma$, i.e., $\delta\gamma\colon t\mapsto \T_{\gamma(t)} E^k$. Hence a variation $\delta \gamma$ can be represented by a family of paths $\gamma_s\colon \R\ra E^k$, where $\gamma_0 = \gamma$ and $\delta\gamma(t) = \tclass 1{(s\mapsto \gamma_s(t))}$. In a~very general, but intuitive sense, a variation $\delta \gamma$ is tangent to a subset $\mathcal{A}$ of paths in $E^k$ if it can be represented by a family $\gamma_s$ lying in $\mathcal{A}$. Results of \cite{Martinez_2008} and \cite[Theorem~3]{GG_var_calc_gen_alg_2008} show that for an algebroid of order one the subspace $\Adm\big(E^1\big)$
of admissible curves is a~Banach submanifold in the space of all $C^1$-paths in~$E^1$. Moreover, the condition $\T \Adm\big(E^1\big) =\VA\big(E^1\big)$ is equivalent to the AL axiom, i.e., for AL algebroids admissible variations are precisely variations tangent to the space of admissible curves.

 In the higher-order case we can prove the following two Lemmas in this direction. Actually, the presented reasonings simplify the proofs available in the literature for the order-one case.

\begin{lem}[tangent space for admissible variations] \label{lem:adm1} Let $\delta \gamma \in \T_\gamma E^k$ be a vector field along an admissible path $\gamma$ in a \prealg\ $\big(\sigma^k \colon E^k \ra M,\tau\colon F\ra M, \kappa^k\big)$. If $\delta \gamma$ is tangent to $\Adm\big(E^k\big)$ then{\samepage
\begin{gather}\label{eqn:Adm_var}
\big(\T \rho^k\big) (\delta \gamma) = \kappa^k_M\big(\tclass{k}{\und{\delta\gamma}}\big),
\end{gather}
where $\tclass{k}{\und{\delta\gamma}}$ denotes the $\thh{k}$-tangent lift of the curve $\und{\delta\gamma} = \T \sigma^k (\delta \gamma)$.}
\end{lem}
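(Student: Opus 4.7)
The plan is to realize $\delta\gamma$ as an infinitesimal deformation within $\Adm(E^k)$ and then differentiate the admissibility condition term-wise. By the notion of tangency discussed just before the lemma, one may represent $\delta\gamma$ by a smooth one-parameter family $\gamma_s\colon \R\ra E^k$ of admissible paths with $\gamma_0=\gamma$ and $\delta\gamma(t)=\frac{\dd}{\dd s}\big|_{s=0}\gamma_s(t)$ for every $t$. Setting $\und{\gamma}_s:=\sigma^k\circ\gamma_s$, the induced base variation is $\und{\delta\gamma}(t)=\T\sigma^k(\delta\gamma(t))=\frac{\dd}{\dd s}\big|_{s=0}\und{\gamma}_s(t)$, so that $\und{\delta\gamma}$ is a curve in $\T M$.

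Since each $\gamma_s$ is admissible, we have the equality of curves $\tclass{k}{\und{\gamma}_s}=\rho^k\circ\gamma_s\colon \R\ra\TT{k} M$. Differentiating it in $s$ at $s=0$ and evaluating at a fixed $t$ produces
\begin{equation*}
\frac{\dd}{\dd s}\bigg|_{s=0}\tclass{k}{\und{\gamma}_s}(t)\;=\;\T\rho^k(\delta\gamma(t))\;\in\;\T\TT{k} M.
\end{equation*}
It remains to identify the left-hand side with $\kappa^k_M\big(\tclass{k}{\und{\delta\gamma}}(t)\big)$. Introduce the smooth map $\Gamma(s,t):=\und{\gamma}_s(t)$ on a neighbourhood of the origin in $\R^2$. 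The left-hand side above is obtained from $\Gamma$ by taking first the $k$-jet in the $t$-direction and then the derivative in the $s$-direction, landing in $\T\TT{k}M$, while $\tclass{k}{\und{\delta\gamma}}(t)$ arises from $\Gamma$ by performing the same two operations in the opposite order, landing in $\TT{k}\T M$. The canonical isomorphism $\kappa^k_M\colon \TT{k}\T M\ra\T\TT{k} M$ is, by construction (cf.\ the explicit coordinate formula at the beginning of Section~\ref{sec:high_alg} and Example~\ref{ex:tang_alg}), precisely the intertwiner of these two orders of differentiation applied to a two-parameter family. This yields the desired identity~\eqref{eqn:Adm_var}.

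The bulk of the argument is thus formal. The only genuinely delicate point is the opening claim that tangency to $\Adm(E^k)$ can always be witnessed by an honest smooth family of admissible paths; this may either be adopted as the working definition of a vector field tangent to $\Adm(E^k)$, or established by an implicit-function-theorem argument extending the Banach-manifold structure of $\Adm(E^1)$ due to Mart\'inez~\cite{Martinez_2008} to the higher-order setting. Once such a representing family is in hand, the remainder is bookkeeping based on the naturality of $\kappa^k_M$ as a commutator of $s$- and $t$-differentiation.
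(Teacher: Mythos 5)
Your proposal is correct and takes essentially the same route as the paper's proof: represent the tangent variation by a one-parameter family of admissible paths (which is exactly the working notion of tangency the paper adopts just before the lemma), differentiate the admissibility condition $\tclass{k}{\und{\gamma}_s}=\rho^k\circ\gamma_s$ in $s$, and identify the two orders of differentiation via the canonical flip $\kappa^k_M$. The delicate point you flag is handled in the paper simply by taking representability by a family as the definition of tangency, so no implicit-function-theorem argument is needed there.
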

\begin{proof}Assume that $\delta \gamma$ is a variation of an admissible path $\gamma\in\Adm\big(E^k\big)$ tangent to $\Adm\big(E^k\big)$, i.e., $\delta \gamma(t) = \tclass{1}{[s\mapsto \gamma(t,s)]}$ and paths $t\mapsto \gamma(t,s)$ are admissible for each $s$. Then
\begin{gather*}
\T \rho^k (\delta\gamma)(t) = \tclass{1}{\big[s\mapsto \rho^k(\gamma(t, s))\big]} = \tclass{1}{[s\mapsto \tclass{k}{[t\mapsto \und{\gamma}(t, s)]}]} \in \T \TT{k} M,
\end{gather*}
while $ \tclass{k}{\und{\delta \gamma}} = \tclass{k}{\big[t\mapsto\tclass{1}{[s\mapsto \und{\gamma}(t, s)]}\big]}$. Due to the definition of $\kappa^k_M$, formula \eqref{eqn:Adm_var} follows.
\end{proof}

\begin{rem}[problems with the Banach-manifold structure]\label{rem:banach_problem} According to~\cite{Martinez_2008}, for $k=1$ the converse of above lemma is also true, i.e., equation~\eqref{eqn:Adm_var} describes the space $\T\Adm\big(E^1\big)$. For $k>1$ some additional conditions are necessary to assure that the subspace of admissible curves in~$E^k$ is a Banach submanifold. Such conditions may follow directly from the geometry of the considered problem. For example in our preprint~\cite{MJ_MR_prototypes} additional assumptions imposed on admissible paths were motivated by the reduction procedure.
\end{rem}
\begin{lem}[a property of AL \prealg s]\label{lem:adm2} If a \prealg\ $\big(\sigma^k\colon E^k \ra M,\tau\colon F\ra M, \kappa^k\big)$ is almost-Lie, $\gamma$ is an admissible path, and $a$ is a curve in $F$ such that $\tau(a)=\und{\gamma}$, then the admissible variation $\delta_a\gamma$ satisfies equation~\eqref{eqn:Adm_var}.
\end{lem}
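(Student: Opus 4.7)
The proof proceeds in three steps, centered on the $\catZM$-morphism structure encoded in the almost-Lie hypothesis.

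First, since $(\sigma^k,\tau,\kappa^k)$ is almost-Lie, $(\TT{k}\rho^F,\T\rho^k)\colon\kappa^k\mZM\kappa^k_M$ is a $\catZM$-morphism. At every $t\in\R$, the pair $\bigl(\tclass{k}{a}(t),\delta_a\gamma(t)\bigr)\in \TT{k}F\times\T E^k$ is $\kappa^k$-related by the very definition~\eqref{eqn:adm_variation} (the admissibility of $\gamma$ is precisely what makes $\delta_a\gamma$ well-defined in the first place). Proposition~\ref{p:ZM-morphism} then ensures that the images $\TT{k}\rho^F(\tclass{k}{a}(t))$ and $\T\rho^k(\delta_a\gamma(t))$ are $\kappa^k_M$-related. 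Since $\kappa^k_M\colon\TT{k}\T M\ra\T\TT{k}M$ is a diffeomorphism, this $\kappa^k_M$-relation becomes the equality
\begin{gather*}
\T\rho^k(\delta_a\gamma(t))=\kappa^k_M\bigl(\TT{k}\rho^F(\tclass{k}{a}(t))\bigr).
\end{gather*}

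Second, by naturality of the bundle functor $\TT{k}$ applied to $\rho^F\colon F\ra \T M$, we have $\TT{k}\rho^F(\tclass{k}{a}(t))=\tclass{k}{(\rho^F\circ a)}(t)$. It therefore suffices to show that the curves $\rho^F\circ a$ and $\und{\delta_a\gamma}$ coincide in $\T M$, whereupon substitution into the first step yields exactly~\eqref{eqn:Adm_var}.

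The third step---establishing $\rho^F\circ a=\und{\delta_a\gamma}$ pointwise---is the main subtlety, and it relies on the weighted structure of $\kappa^k$ rather than on the AL axiom. By Definition~\ref{def:pre_algebroid}, $\rho^F$ equals the order-zero reduction $\kappa^0$ of $\kappa^k$, and by Proposition~\ref{prop:red_h_to_l}\eqref{prop:red_h_to_l:i1} this reduction is obtained by projecting the graded submanifold $\kappa^k\subset\TT{k}F\times \T E^k$ onto $F\times\T M$, i.e., onto the order-zero reductions of its source and target. Under this projection, $\tclass{k}{a}(t)\in \TT{k}F$ is sent to its base point $a(t)\in F$, while $\delta_a\gamma(t)\in\T E^k$ is sent to $\T\sigma^k(\delta_a\gamma(t))=\und{\delta_a\gamma}(t)\in\T M$. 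Since the original pair is $\kappa^k$-related, its image under the projection is $\kappa^0$-related, yielding $\und{\delta_a\gamma}(t)=\rho^F(a(t))$ as required. The main obstacle is making this last step rigorous: one must correctly identify the order-zero reductions of the bi-graded manifolds $\TT{k}F$ and $\T E^k$ with $F$ and $\T M$ respectively, and verify that projecting to these reductions is compatible with the \ZM---a consequence of the bi-homogeneity of $\kappa^k$. A coordinate verification in the setting of Remark~\ref{rem:local_order_2} (via the equation $\dot{\und{x}}^a=Q^{'a}_iy^i$ in~\eqref{e:local_kappa2}) can serve as a sanity check.
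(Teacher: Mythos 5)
Your proof is correct and follows essentially the same route as the paper's: first the AL axiom (via the $\catZM$-morphism property) gives $\T\rho^k(\delta_a\gamma)=\kappa^k_M\big(\tclass{k}{\rho^F(a)}\big)$, and then one identifies $\rho^F\circ a=\und{\delta_a\gamma}$ from the fact that $\kappa^k$-related elements project, at order zero, onto $\rho^F$-related ones. The only difference is cosmetic: the paper expresses your third step as the commutativity of a second diagram relating $\tau^k_F$ and $\T\sigma^k$ over $\rho^F$, whereas you justify the same fact explicitly through the weighted structure of $\kappa^k$ and Proposition~\ref{prop:red_h_to_l}.
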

\begin{proof}Consider a diagram
\begin{gather*}
\xymatrix{\TT{k} F\ar[d]^{\TT{k}\rho^F} \ar@{-|>}[rr]^{\kappa^k} &&\T E^k\ar[d]^{\T \rho^k}\\
\TT{k} \T M \ar[rr]^{\kappa^k_M} &&\T \TT{k} M,}
\end{gather*}
which is commutative as our \prealg\ $\big(\sigma^k, \tau, \kappa^k\big)$ is almost-Lie. As $\big(\tclass{k}{a}, \delta_a\gamma\big)\in \kappa^k$, we get $\T \rho^k (\delta_a \gamma) = \kappa^k_M\big(\TT{k} \rho^F\big(\tclass{k}{a}\big)\big) = \kappa^k_M\big(\tclass{k}{\rho^F(a)}\big) = \kappa^k_M\big(\tclass{k}{\und{\delta_a \gamma}}\big)$. The last equality follows from the commutativity of the diagram
\begin{gather*}
\xymatrix{\TT{k} F\ar[d]^{\tau^k_F} \ar@{-|>}[rr]^{\kappa^k}&&\T E^k\ar[d]^{\T \sigma^k}\\
F \ar[rr]^{\rho^F} && \T M, }
\end{gather*}
from which we get $\rho^F(a) = \und{\delta_a \gamma}$ as $\tclass{k}{a}\sim_{\kappa^k} \delta_a\gamma$.
\end{proof}

{\bf Variational calculus.} We understand Lagrangian mechanics on a $\thh{k}$-order \prealg\ $\big(\sigma^k,\tau, \kappa^k\big)$ as the study how a functional, obtained by integrating the Lagrangian function $L\colon E^k\ra \R$ along an admissible curve $\gamma\colon \R\ra E^k$, behaves under movement in the direction of admissible variations. By construction
\begin{gather}\label{eqn:var_formula_0}
\<\dd L(\gamma),\del_a\gamma>_{\tau_{E^k}}=\big\langle \dd L(\gamma),\big(\kappa^k\big)_{\gamma}\big(\tclass{k}{a}\big)\big\rangle_{\tau_{E^k}}=\big\langle { \big(\kappa^k\big)}^\ast(\dd L(\gamma)), \tclass{k}{a}\big\rangle_{\TT{k}\tau},
\end{gather}
where ${\big(\kappa^k\big)}^\ast\colon \TT\ast E^k\ra \TT{k} F^\ast$ is a morphism of graded-linear bundles dual to $ \kappa^k$. We have thus arrived at the pairing of a $\TT{k} F^\ast$-valued curve with the $\thh{k}$-tangent lift $\tclass{k}{a}\colon \R\ra\TT{k} F$ of the generator $a$ of $\del_a\gamma$. Using our earlier results on the $\thh{k}$-order geometric integration by parts -- see \cite{MJ_MR_higher_var_calc_2013} -- we may present this pairing as a sum of a complete derivative, and a paring $\<\cdot,\cdot>_{\tau}\colon F^\ast\times F\ra \R$ evaluated on the generator $a$
\begin{gather}\label{eqn:var_formula}\big\langle {\big(\kappa^k\big)}^\ast(\dd L(\gamma)), \tclass{k}{a}\big\rangle_{\TT{k}\tau}=\big\langle \mathcal{EL}_k\big(\tclass{k}{\gamma}\big),a\big\rangle_{\tau}+\dbydt{\big\langle \mathcal{P}_k\big( \tclass{k-1}{\gamma}\big),\tclass{k-1}a\big\rangle_{\TT{k-1}\tau}}.
\end{gather}
 Here $\mathcal{EL}_k(\cdot)$ denotes the Euler--Lagrange operator and $\mathcal{P}_k(\cdot)$ the momentum operator associated with the Lagrangian (see \cite{MJ_MR_higher_var_calc_2013} for the precise definitions). An admissible curve $\gamma$ satisfies \emph{Euler--Lagrange equations} \mbox{$\mathcal{EL}_k\big( \tclass k {\gamma}\big)=0$} (i.e., is a \emph{trajectory} of the considered Lagrangian system) if and only if for every virtual displacement $a$ we have
\begin{gather}\label{eqn:var_calc}
\<\dd L(\gamma),\del_a\gamma>_{\tau_{E^k}}=\dbydt{\big\langle \mathcal{P}_k\big( \tclass{k-1}{\gamma}\big), \tclass{k-1}a\big\rangle_{\TT{k-1}\tau}},
\end{gather}
or in a more familiar integral version
\begin{gather*}\int_0^T\<\dd L(\gamma),\del_a\gamma>_{\tau_{E^k}}\dd t=\big\langle\mathcal{P}_k\big( \tclass{k-1}{\gamma}\big) ,\tclass{k-1}a\big\rangle_{\TT{k-1}\tau}\big|^T_0.\end{gather*}
The presented formalism generalizes the construction of geometric Lagrangian mechanics on a~general algebroid from \cite{GG_var_calc_gen_alg_2008}.

\begin{rem}\label{rem:var_calc}In the literature there have been a few attempts to develop higher-order mechanics on generalizations of Lie algebroids. With the sole exception of \cite{BGG_2015}, all trials known to us used the structure of a prolongation of a Lie algebroid $\big(\E{k},\kappa^{[k]}\big)$ (see Section~\ref{ssec:prolongations}), naturally appearing in reduction of higher tangent bundles of Lie groupoids.

As we already pointed out when discussing the notion of an AL algebroid prolongation for the first time, the role of the \ZM\ $\kappa^{[k]}$ in the structure of an algebroid prolongation remained unrecognised in all the primary sources \cite{Col_deDiego_seminar_2011, Martinez_2015, Saunders_2004}. On the other hand, in the theory presented above, $\kappa^{[k]}$ is responsible for the construction of admissible variations. Mart\`{i}nez in \cite{Martinez_2015} constructed the $\thh{k}$-algebroid lift $s^{[k]}$ of a section $s\in\Sec_M(E)$ directly form the first-order data $(E,\kappa)$ (which should not be surprising as the prolongation is completely determined by its base object). Then he defined an admissible variation generated by $s$ as a restriction of $s^{[k]}$ to an admissible trajectory $\gamma$. Apparently, his approach is equivalent to ours as $s^{[k]}|_\gamma=\delta_a\gamma$ for $a=s|_{\und{\gamma}}$ (see the discussion of symmetries and conservation laws below). Summing up, on the prolongation of a Lie algebroid the geometry of Mart\`{i}nez's formalism is the same as ours, despite the presence of $\kappa^{[k]}$ not being directly observed.

The case of $k=2$ prolongations was intensively studied by Colombo and his collaborators (see \cite{Abrunheiro_Colombo_2018, Colombo_2017} and the references therein). In his formalism the second prolongation $\E 2$ is treated as a subset in the prolongation of $E$ along the projection $\tau\colon E\ra M$ (see Example~\ref{ex:prolong}), i.e., $\E 2\subset \mathcal{T}^EE$. Now the initial problem on $\E 2$ can be treated as a constrained problem on $ \mathcal{T}^EE$, the latter being a first-order algebroid. Equations of motion are derived by using the first-order formalism of \cite{Martinez_geom_form_mech_lie_alg_2001} applied to $\mathcal{T}^EE$. This method seems to be quite complicated (prolongations of prolongations are required) in comparison to our approach, the presence of $\kappa^{[2]}$ is hidden somewhere inside the inclusion $\E 2\subset \mathcal{T}^EE$, and the direct relation with variational calculus is hard to observe.

The approach of \cite{BGG_2015} is conceptually similar to the one of Colombo, but much more general. Given a \grB\ $F^k$, the authors consider a first-order algebroid structure on its \emph{linearisation} $D\big(F^k\big)$. Again we have a canonical inclusion $F^k\subset D\big(F^k\big)$, and the $\thh{k}$-order dynamics on $F^k$ is derived as the first-order constrained dynamics on the algebroid $D\big(F^k\big)$. The relation of the methods of \cite{BGG_2015} to the ones of this work is so far unclear, although in the case of prolongation of algebroids both approaches give consistent results.
\end{rem}

{\bf Symmetries and conservation laws.} As explained above, Euler--Lagrange equations on higher \prealg s are derived by studying changes of the Lagrangian $L(\gamma)$ in the direction of every admissible variation $\del_a\gamma$. By contrast, conservation laws are related with special properties of a particular admissible variation. Below we will sketch basic concepts standing behind conservation laws in higher-order variational calculus.

Let $\gamma\colon \R\ra E^k$ be an admissible curve over $\und{\gamma}\colon \R\ra M$. The idea is to look for a generator $a\colon \R\ra F$ of an admissible variation $\del_a\gamma$ such that $\<\dd L(\gamma (t)),\del_a\gamma (t)>=\dbydt f (t)$ for some smooth function $f\colon \R\ra \R$. Now if $\gamma$ is a solution of the Euler--Lagrange equations, by \eqref{eqn:var_calc} we have $\dbydt f=\dbydt{\<\mathcal{P}_k\big(\tclass{k-1}{\gamma}\big), \tclass{k-1}a>_{\TT{k-1}\tau}}$, and hence the difference $f-\<\mathcal{P}_k\big( \tclass{k-1}{\gamma}\big), \tclass{k-1}a>_{\TT{k-1}\tau}$ is constant along $\gamma$.

In practice such a method of finding constants of motion is not very effective, as $a$ is defined only along $\und{\gamma}$ which is unknown until we actually solve (at least partially) the Euler--Lagrange equations $\mathcal{EL}_k\big(\tclass k{\gamma}\big)=0$. Instead it is better to look for a generator universal for every possible~$\gamma$ by considering $a=s|_{\und{\gamma}}$, where $s\in\Sec_M(F)$ is some section of $\tau$. Note that in this case \mbox{$s^{[k]}|_{\gamma}=\del_a\gamma$}, i.e., the admissible variation $\del_a\gamma$ is the restriction of the $\thh{k}$-order algebroid lift~$s^{[k]}$ of~$s$.
Note that $\<\dd L,s^{[k]}>_{\tau_{E^k}}$ is a smooth function on $E^k$. If $\<\dd L,s^{[k]}>_{\tau_{E^k}}=\dd f \circ \rho^k$ for some smooth function $f\in \Cf(\TT{k-1}M)$ (note that $\dd f$ may be regarded as a map $\dd f\colon \TT{k}M\subset\T\TT{k-1}M\ra \R$), we call the section $s\in\Sec_M(F)$ a \emph{generator of the symmetry of $L$}. If this is the case then
\begin{gather*}
\<\dd L(\gamma),\del_a\gamma>=\big\langle\dd L|_{\gamma},s^{[k]}|_{\gamma}\big\rangle_{\tau_{E^k}}=\dd f\big(\rho^k(\gamma)\big)=\dd f\big( \tclass {k}{\und{\gamma}}\big)=\dbydt{f\big(\tclass{k-1}{\und{\gamma}}\big)},
\end{gather*}
 i.e., $\<\dd L(\gamma),\del_a\gamma>$ is a total derivative regardless of the choice of an admissible curve $\gamma$, as intended.
If follows that
$f\big( \tclass{k-1}{\und{\gamma}}\big)-\<\mathcal{P}_k\big(\tclass{k-1}{\gamma}\big), {\T^{k-1}{s}\circ \rho^{k-1}(\gamma)}>_{\TT{k-1}{\tau}}$ is constant, i.e., we derived a~\emph{conservation law}
related with the symmetry of~$L$.

{\bf A simple example of a prealgebroid.} Let us end this part with a very simple example of a~variational problem on a \prealg . Our starting point is a standard second-order variational problem on the Euclidean plane $\R^2$, constituted by a Lagrangian function \mbox{$L\colon \TT{2}\R^2\ra \R$}. In this case the HA in question is simply the second tangent bundle $\tau^2_M\colon \TT{2} \R^2\ra \R^2$ equipped with the standard higher algebroid structure $\kappa^2_{\R^2}\colon \TT{2}\T\R^2\overset{\simeq}{\lra}\T\TT{2}\R^2$. Formula \eqref{eqn:var_formula} leads to the standard Euler--Lagrange equations:
\begin{gather}\label{eqn:EL1}
\dbydtk{2}\left(\frac{\pa L}{\pa \ddot x}\right)-\dbydt \left(\frac{\pa L}{\pa \dot x}\right)+\frac{\pa L}{\pa x}=0\qquad\text{and}\qquad \dbydtk{2}\left(\frac{\pa L}{\pa \ddot y}\right)-\dbydt \left(\frac{\pa L}{\pa \dot y}\right)+\frac{\pa L}{\pa y}=0,
\end{gather}
where $(x,y)$ are standard coordinates on $\R^2$ and $(x,y,\dot x,\dot y,\ddot x,\ddot y)$ the adapted coordinates on~$\TT{2}\R^2$.

Consider now the following action of $(a,b,c)\in \R^3$ on (the germs at $t=0$ of) curves in $\R^2$:
\begin{gather*}(a,b,c)\circ (x(t),y(t))=(x(t)+a+\dot x(0)bt, y(t)+c).\end{gather*}
Obviously this action reduces to the $\R^3$-action on $\TT{2}\R^2$ given by
\begin{gather*}(a,b,c)\circ (x,y,\dot x,\dot y,\ddot x,\ddot y)=(x+a,y+c,\dot x(1+b),\dot y,\ddot x,\ddot y).\end{gather*}
If $L$ is invariant under this action, then \eqref{eqn:EL1} reduces to
\begin{gather}\label{eqn:EL2}
\dbydtk{2}\left(\frac{\pa L}{\pa \ddot x}\right)=0\qquad\text{and}\qquad \dbydtk{2}\left(\frac{\pa L}{\pa \ddot y}\right)-\dbydt \left(\frac{\pa L}{\pa \dot y}\right)=0.
\end{gather}

On the other hand, the latter equations can be obtained within the framework of variational calculus on \prealg s by an easy reduction procedure. Namely, note that the quotient of~$\TT{2}\R^2$ by the action of~$\R^3$ is naturally a graded space (i.e., a \grB\ over a point) $\sigma^2\colon E^2=\R[1]\oplus\R^2[2]\ra\{\pt\}$. On $E^2$ we can introduce graded coordinates $(y_1,x_2,y_2)$ induced by $\dot y$, $\ddot x$ and $\ddot y$, respectively.

It is clear that an invariant Lagrangian $L$ induces a function $l\colon E^2\ra\R$ such that $L=p\circ l$ for $p\colon \TT{2}\R^2\ra E^2=\TT{2}\R^2/\R^3$ being the quotient map.

We can further equip $\sigma^2$ with a \prealg\ structure
\begin{gather*}\xymatrix{\TT{2}\R^2\ar[d]^{\TT{2}\tau}\ar@{-|>}[rr]^{\kappa^2}&&\T E^2\ar[d]^{\tau_{E^2}}\\
\{\pt\}&& E^2\ar[ll]}\end{gather*}
as follows
\begin{gather*}\big(\kappa^2\big)_{(y_1,x_2,y_2)}\colon \ (a,b,\dot a,\dot b,\ddot a,\ddot b)\mapsto (y_1,x_2,y_2,\dot y_1=\dot b,\dot x_2=\ddot a,\dot y_2=\ddot b).\end{gather*}

For this structure the admissibility condition is empty, since $\tau\colon \R^2\ra\{\pt\}$ has a trivial base. However, an additional condition
\begin{gather}\label{eqn:adm_ex}
\dot y_1=y_2
\end{gather}
should be imposed on admissible curves if we want to maintain the correspondence of admissible curves in $E^2$ with the admissible curves in $\TT{2}\R^2$ under the reduction procedure (cf.\ Remark~\ref{rem:banach_problem}).

It is now an easy exercise to show that equations \eqref{eqn:EL2} are the Euler--Lagrange equations for the variational problem on $\big(\sigma^2,\tau,\kappa^2\big)$ constituted by function $l\colon E^2\ra\R$ and the admissibility condition \eqref{eqn:adm_ex}.

The above example suggests that \prealg s may naturally appear in reductions of standard variational problems by actions which are non-trivial on higher jets. (Note that in the standard Lie groupoid--Lie algebroid reduction \cite{Mackenzie_lie_2005} and its generalization to higher jets \cite{MJ_MR_models_high_alg_2015} the action of the groupoid on higher jets is merely a consequence of its action on points -- see Remark~\ref{rem:pre_alg}.)

\subsection{Examples}\label{ssec:ex_var_calc}

Now we shall provide some basic examples of higher-order Euler--Lagrange equations obtained using the formalism introduced in Section~\ref{ssec:ex_var_calc}. For simplicity we will reduce our attention to the case $k=2$.

{\bf Variational problem on a prolongation of an AL algebroid.} In Example~\ref{ex:prolong_2} we derived the local form of the second prolongation $\big(\E 2,\kappa^{[2]}\big)$ of an AL algebroid structure $(E,\kappa)$. Now we can use it to obtain a coordinate formula of the second-order Euler--Lagrange equations. Consider an admissible curve $\gamma(t)=\big(x^a(t),y^i(t),y^{j,(1)}(t)\big)\in \E 2$. From~\eqref{eqn:kappa_prolong_2} the admissibility conditions for $\gamma(t)$ are $\dot{x}^a = Q^a_i(x) y^i$ and $\ddot{x}^a = \frac{\pa Q^a_i}{\pa x^b}(x)Q^b_j(x)y^i y^j + Q^a_i(x) y^{i,(1)}$. We can deduce the latter equation from the former under an additional assumption that $y^{i,(1)}(t)=\dot y^i(t)$, which can be easily justified by the reduction procedure (see~\cite{MJ_MR_prototypes}). A~generator $a(t)=(x^a(t),\xi^i(t))\in E_{\gamma(t)}$ produces the following admissible variation
\begin{gather*}
\delta_a\gamma(t)\colon \
\begin{cases}
\delta{x}^a = Q^{a}_i(x) \xi^i(t),\\
\delta{y}^i = \dot{\xi}^i + Q^i_{jk}(x)\,y^j \xi^k,\\
\delta y^{i,(1)} = \ddot{\xi}^i + Q^i_{jk}(x)y^j \dot \xi^k +Q^i_{jk}(x)\dot{y}^j \xi^k+ \dfrac{\pa Q^i_{jk}}{\pa x^a}(x)Q^a_l(x)y^l y^j \xi^k= \big(\delta y^i\big)^{\cdot}.
\end{cases}
\end{gather*}
Now, for the Lagrangian $L\colon \E 2\ra\R$, the left hand-side of \eqref{eqn:var_formula_0} reds as
\begin{gather*}\<\dd L(\gamma),\del_a\gamma>=\frac{\pa L\ }{\pa x^a}\big(x,y,y^{(1)}\big)\delta x^a+\frac{\pa L\ }{\pa y^i}\big(x,y,y^{(1)}\big)\delta y^i+\frac{\pa L\ \ }{\pa y^{i,(1)}}\big(x,y,y^{(1)}\big)\delta y^{i,(1)}.\end{gather*}
Expressing it in terms of $\xi^i$'s and integrating by parts leads us to
\begin{gather*}\<\dd L(\gamma),\del_a\gamma> = \<\left(\delta^k_i\frac{\dd\ }{\dd t}-Q^k_{ij}(x)y^j\right)\left(\frac{\dd\ }{\dd t}\frac{\pa L\ \ }{\pa y^{k,(1)}}-\frac{\pa L\ }{\pa y^k}\right)+Q^a_i(x)\frac{\pa L\ }{\pa x^a},\xi^i >\\
\hphantom{\<\dd L(\gamma),\del_a\gamma> =}{} +\frac{\dd\ }{\dd t}\left(\<\frac{\pa L\ }{\pa y^{i,(1)}},\dot \xi^i>-\<\left(\delta^k_i\frac{\dd\ }{\dd t}-Q^k_{ij}(x)y^j\right)\frac{\dd\ }{\dd t}\frac{\pa L\ }{\pa y^{k,(1)}}-\frac{\pa L\ }{\pa y^i},\xi^i>\right) .
\end{gather*}
And hence (cf.~\eqref{eqn:var_formula}) the \emph{second-order Euler--Lagrange equations} become
\begin{gather*}
\left(\delta^k_i\frac{\dd\ }{\dd t}-Q^k_{ij}(x)y^j\right)\left(\frac{\dd\ }{\dd t}\frac{\pa L\ }{\pa y^{k,(1)}}-\frac{\pa L\ }{\pa y^k}\right)+Q^a_i(x)\frac{\pa L\ }{\pa x^a}=0\qquad\text{and}\qquad
\dot{x}^a = Q^a_i(x) \,y^i.
\end{gather*}

{\bf Euler--Poincar\'{e} equations.} Let $(\g,[\cdot,\cdot])\simeq(\g,\kappa_{\g})$ be a Lie algebra, and let $l\colon \T\g\simeq \g\times\g\ra \R$ be a Lagrangian function its the second prolongation $\big(\g^{[2]}=\T\g,\kappa_{\g}^{[2]}\big)$. By setting in the previous example $Q^a_i(x)=0$ and $Q^i_{jk}(x)=c^i_{jk}$, where $c^i_{jk}$ denote the structural constants of~$\g$ in a given basis~$\{e_j\}$, we easily arrive at the \emph{second-order Euler--Poincar\'{e} equations} for a~curve $(a(t),\dot a(t))\in \T\g$:
\begin{gather}\label{eqn:EP}
\left(\frac{\dd\ }{\dd t}-\operatorname{ad}^\ast_{a(t)}\right)\left(\frac{\dd \ }{\dd t}\frac{\pa l}{\pa \dot a}-\frac{\pa l}{\pa a}\right)(a(t),\dot a(t))=0.
\end{gather}
An alternative, more geometric derivation is possible directly from \eqref{eqn:adm_variation} and \eqref{eqn:var_formula_0} using formula~\eqref{eqn:kappa_l_GG_expd}~-- see~\cite{MJ_MR_prototypes} for the treatment of the general case. The obtained results are consistent with these of Colombo and De Diego~\cite{Colombo_Diego_2014}.

\section[Further examples -- substructures and quotients of $\TT{k} G/G$]{Further examples -- substructures and quotients of $\boldsymbol{\TT{k} G/G}$}\label{sec:examples}

Another interesting class of examples of (Lie) higher algebroids is obtained from the reduction $\TT{k} G / G$ of the $\thh{k}$-tangent bundle of a Lie group $G$. The resulting space $E^k := \TT{k}_e G$, consisting of $k$-velocities in $G$ based at the identity element $e\in G$, is a \grB\ over a single point $e\in G$ (i.e., a \emph{graded space} -- see \cite{JG_MR_gr_bund_hgm_str_2011}). It can be equipped with the canonical HA structure~$\kappa^k_\g$ which, in fact, can be identified with the $\thh{k}$-prolongation (in the sense of Section~\ref{ssec:prolongations}) of~$(\g,[\cdot,\cdot])$~-- the Lie algebra of the group~$G$.

Throughout this part we shall describe all higher subalgebroids of $\TT{k} G / G$, and give examples of HA quotients of $\TT{k} G / G$ by which we understand higher algebroids $(F^k, \kappa^k)$ obtained from $\TT{k} G / G$ by means of surjective HA morphism onto $F^k$.

{\bf A (Lie) higher algebroid structure on $\boldsymbol{\TT{k}_eG}$.}
 We shall begin by describing the higher algebroid structure on $\TT{k}_eG$. First of all, $\TT{k}_e G$ is a split graded space concentrated in weights $1,2, \ldots, k$ in which each homogeneous component is identified with $\g$, the Lie algebra of $G$. This canonical identification is possible thanks to the group structure on $G$ and is obtained by means of the local diffeomorphism $\exp\colon \g\ra G$ inducing an isomorphism $\TT{k}_0 \exp\colon \TT{k}_0\g \xrightarrow{\simeq} \TT{k}_e G$ of graded spaces. (Note that in general, for $k\geq 2$, the $\thh{k}$-tangent space of a manifold at a given point has no canonical vector space structure.) Next, $\TT{k}_0 \g$ (and so $\TT{k}_e G$) has a canonical identification with $\TT{k-1} \g$ which is, on the other hand, a graded Lie algebra (see Proposition~\ref{p:algebroid_Tk_E}), yet the latter gradation is shifted by $-1$ with respect to the former. Summing up, $\TT{k}_e G$ is canonically equipped with two structures: of a graded space (which is split) and of a graded Lie algebra. Both structures are clearly recognized after canonical identifications $\TT{k}_e G = \TT{k-1}\g = \g\otimes \R[t]/\ideal{t^k}$.

Now we would like to recall a \ZM\ $\kappa^k_\g\colon \TT{k}\g\relto \T\TT{k-1}\g$ constituting a HA structure on $\TT{k}_eG\simeq\TT{k-1}\g$ (see \cite[Section~6]{MJ_MR_models_high_alg_2015} for details). An element $X\in\TT{k}\g$ represented by a curve $t\mapsto \sum_{j=0}^k X_j t^j\in\g$ can be identified with a $(k+1)$-tuple $(X_0,X_1,\hdots,X_k)$ of elements of $\g$. In a~similar manner a vector $Y \in \T \TT{k-1} \g\simeq \TT{k-1}\g\times\TT{k-1}\g$ (note that $\T\TT{k-1}\g$ is a tangent bundle of a~vector space), can be identified with a $2k$-tuple $(Y_0,\hdots,Y_{k-1},\dot Y_0,\hdots,\dot Y_{k-1})$ of elements of~$\g$. Now $(X,Y)\in\kappa^k_\g$ if and only if
\begin{gather}\label{eqn:kappa_l_GG_expd}
\dot Y_{l} = (l+1) X_{l+1} - \sum_{i+j=l} [X_i, Y_j],\qquad\text{for every} \quad l=0,1,\hdots,k-1.
\end{gather}
The details of this relation can be found in \cite[Proposition~6.3]{MJ_MR_models_high_alg_2015}.

{\bf Higher subalgebroids of $\boldsymbol{\TT{k}_e G}$.} Let $F^k$ be a split graded subspace of $\T_e^k G \simeq \g[t]/\ideal{t^k}$, so there are linear subspaces $V_j\subset \g$ ($0\leq j\leq k-1$) such that $F^k = \big\{\sum_{j=0}^{k-1} v_j t^j +\ideal{t^k}\colon v_j\in V_j\big\}$. We recall that homogeneous functions of weight $j+1$ on $\T_e^k G$ correspond to linear functions on the summand $t^j \g$.

\begin{prop}[subalgebroids of $\TT{k}_eG$]\label{prop:sub_alg_TkG} A split graded subbundle $F^k \subset \TT{k}_e G$ is a higher subalgebroid if and only if
\begin{gather}\label{eqn:subTkg}
 \TT{k-1} V_0 \subset F^k \qquad\text{and}\qquad [V_0,V_i]\subset V_j\qquad\text{for each} \quad i=0,1,\hdots,k-1 \quad \text{and} \quad j \geq i.\!\!\!
\end{gather}
 In particular, $V_0$ should be a Lie subalgebra of $\g$ and additionally each subspace $V_i$ should contain~$V_0$ and be preserved by the action of the latter subspace.
\end{prop}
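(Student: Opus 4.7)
The plan is to unfold Definition~\ref{def:higher_sub_algebroid} for the graded subspace $F^k \subset \TT{k}_e G$ and then translate the resulting fine-restriction condition into linear-algebraic equations on the subspaces $V_j$ via the explicit formula~\eqref{eqn:kappa_l_GG_expd} for $\kappa^k_\g$. First I would note that the order-one reduction of $F^k$ is $V_0 \subset \g$, so $\TT{k}(F^k)^1 = \TT{k}V_0$, and that since the base $M = \{e\}$ is a point, the base-map part of the fine-restriction condition is vacuous; only the fiberwise part remains. It demands that whenever $(X,Y) \in \kappa^k_\g$ satisfies $X \in \TT{k}V_0$ and $\sigma_2(Y) \in F^k$, one has $Y \in \T F^k$. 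As $F^k$ is a graded linear subspace of $\TT{k-1}\g$, its tangent space at any point is canonically $F^k$ itself, so the condition becomes: each $\dot Y_l$ produced by~\eqref{eqn:kappa_l_GG_expd} must lie in $V_l$.

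For the ``if'' direction, assuming~\eqref{eqn:subTkg} I would take arbitrary $X = (X_0, \ldots, X_k)$ with all $X_i \in V_0$ and arbitrary $y = (Y_0, \ldots, Y_{k-1})$ with $Y_j \in V_j$, and inspect each summand of
\begin{equation*}
\dot Y_l = (l+1) X_{l+1} - \sum_{i+j=l} [X_i, Y_j].
\end{equation*}
The leading term lies in $V_0 \subset V_l$ by the first half of~\eqref{eqn:subTkg}, while each bracket $[X_i, Y_j]$ with $i + j = l$ lies in $[V_0, V_j] \subset V_l$ by the second half, since $j \leq l$. Hence $\dot Y_l \in V_l$, completing this direction.

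For the ``only if'' direction I would probe the fine-restriction condition with carefully chosen test pairs. Fixing $l \in \{1, \ldots, k-1\}$, taking $X$ with only $X_{l+1} \in V_0$ nonzero and $y = 0$ produces $\dot Y_l = (l+1) X_{l+1}$ while all other components of $\dot Y$ vanish, forcing $V_0 \subset V_l$. Next, for $0 \leq m \leq l \leq k-1$, taking $X$ with only $X_{l-m} \in V_0$ nonzero and $y$ with only $Y_m \in V_m$ nonzero gives $\dot Y_l = -[X_{l-m}, Y_m]$, while the only other possibly nonzero component is $\dot Y_{l-m-1} = (l-m) X_{l-m}$, which lies in $V_0 \subset V_{l-m-1}$ automatically by the containment just derived. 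Hence the fine-restriction condition forces $[V_0, V_m] \subset V_l$, which is the second half of~\eqref{eqn:subTkg}.

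The ``in particular'' clause is then immediate: the case $i = j = 0$ of the second condition yields $[V_0, V_0] \subset V_0$, so $V_0$ is a Lie subalgebra; the condition $\TT{k-1}V_0 \subset F^k$ is precisely $V_0 \subset V_i$ for all $i$; and taking $j = i$ gives $[V_0, V_i] \subset V_i$, expressing $V_0$-invariance of each $V_i$. The main subtlety, and really the only one, is the order of the two probes in the ``only if'' direction: the containment $V_0 \subset V_l$ must be established first so that the auxiliary components appearing in the second probe automatically lie in the appropriate $V_{l'}$ and do not generate spurious additional constraints.
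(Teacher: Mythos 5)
Your proof is correct and takes essentially the same route as the paper: unfold the fine-restriction condition for $\kappa^k_\g$ on $\TT{k}V_0\times\T F^k$ using formula~\eqref{eqn:kappa_l_GG_expd} and read off the conditions $V_0+\sum_{i\le l}[V_0,V_i]\subset V_l$, which are exactly~\eqref{eqn:subTkg}. You merely spell out the ``only if'' direction with explicit test elements where the paper treats it as immediate from the arbitrariness of the $X_i$ and $Y_j$; your closing worry about the order of the probes is harmless but unnecessary, since in that direction one only uses the constraint on the single component $\dot Y_l$ and may ignore the others.
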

\begin{proof} We should basically check if $\kappa^k_\g$ restricts fine to a \ZM\ from $\TT{k}V_0$ to $\T F^k$.
Take any $\und{Y}=(Y_0, \hdots, Y_{k-1})\in F^k \subset \TT{k-1}\g$ and $X= (X_0,\hdots,X_k)\in \TT{k} V_0 \subset\TT{k}\g$. So all the components~$X_j$ are in~$V_0$ and $Y_i\in V_i$ for $i=0,\hdots,k-1$. Let $Y := (\und{Y}, \dot{\und{Y}}) = \big(\kappa^k_\g\big)_{\und{Y}}(X)$ be the element in $\T_{\und Y}\TT{k-1}\g$ which is $\kappa^k_\g$-related to $X$. The components of $\dot{\und Y}=(\dot Y_0,\hdots,\dot Y_{k-1})$ are determined by~\eqref{eqn:kappa_l_GG_expd}. We should examine when $\dot{\und Y}\in F^k$, i.e., when $\dot Y_i\in V_i$ for each $i=0,1,\hdots,k-1$. This happens if and only if
\begin{gather*}
 V_0 + \sum_{i=0}^j [V_0, V_i] \subset V_j
\end{gather*}
for each $j=0,1,\hdots,k-1$. This gives \eqref{eqn:subTkg}.
\end{proof}

{\bf Quotients of $\boldsymbol{\TT{k}_e G}$.} General quotients of Lie algebroids and the concept of an ideal in a~Lie algebroid are quite involved subjects (see~\cite{Mackenzie_lie_2005}). For this reason in the following definition of a quotient (Lie) higher algebroid structure we shall restrict our attention only to ``quotient'' maps covering the identity on the base. We call a \grB\ morphism $\phi^k\colon E^k\ra F^k$ \emph{surjective} if for each $1\leq j\leq k$ the induced top core vector bundle morphisms $\core{\phi^j}\colon \core{E^j}\ra \core{F^j}$ are fibre-wise surjective linear maps. In particular, such a morphisms always is a surjective mapping $E^k\ra F^k$ but not vice versa: a \grB\ morphism which is a surjective mapping need not induce surjective mapping between the core bundles.

\begin{df}[a quotient of a (Lie) HA]\label{df:quotient} We shall say that a $\thh{k}$-order algebroid $\big(\sigma^k_F\colon F^k\ra M, \kappa^{k,F}\big)$ is a \emph{quotient} of a (Lie) higher algebroid $\big(\sigma^k_E\colon E^k\ra M, \kappa^{k,E}\big)$ if there is a surjective \grB\ morphism $\phi^k\colon E^k \ra F^k$ covering the identity on the base manifolds such that $\big(\TT{k} \phi^1, \T\phi^k\big)$ is a $\catZM$-morphism $ \kappa^{k,E} \mZM \kappa^{k,F}$. In other words, elements $X'\in \TT{k} F^1$ and $Y'\in \T F^k$ are $ \kappa^{k,F}$-related if and only if they are images under projections $\TT{k} \phi^1$ and $\T \phi^k$, respectively, of some $ \kappa^{k,E}$-related elements $X\in \TT{k} E^1$ and $Y\in \T E^k$:
\begin{gather}\label{diag:df_quotient_kappa_k}\begin{split} &
\xymatrix{
\TT{k} E^1 \ar[d]_{T^k\phi^1} \ar@{-|>}[rr]^{ \kappa^{k,E}} && \T E^k\ar[d]^{\T \phi^k}\\
\TT{k} F^1 \ar@{--|>}[rr]^{ \kappa^{k,F}} && \T F^k.}\end{split}
\end{gather}
\end{df}

\begin{ex}[a class of examples]A slight generalization of the example $\big(\TT{k-1}\g, \kappa^k_{\g}\big)$ is possible. Let $E^k = \bigoplus_{i=0}^{k-1} \g_i$ be a graded Lie algebra. In particular, $\g_0$ has a Lie algebra structure and{, consequently, $\TT{k} \g_0$ also} is graded Lie algebra. Let $\alpha\colon \TT{k-1}\g_0 \ra E^k$ be a graded Lie algebra homomorphism such that $\alpha_0 = \id_{\g_0}$, where $\alpha_0$ is the restriction of~$\alpha$ to the subalgebra~$\g_0$ being the component in degree $0$ of the graded Lie algebra $\TT{k-1}\g_0$. For $X\in \TT{k} \g_0$ let $(X_{\bar{0}}, X_{\bar{1}}) \in \T \TT{k-1}\g_0 \simeq \TT{k-1}\g_0 \times \TT{k-1}\g_0$ be the image of $X$ under the canonical embedding $\TT{k} \g_0 \subset \T \TT{k-1}\g_0$. We consider~$E^k$ as a (split) graded space defined by the weight vector field on~$E^k$ given by $\Delta = \sum_{i=0}^{k-1} (i+1)\Delta_{\g_i}$, where $\Delta_{\g_i}$ is the Euler vector field on $\g_i$.
Then the formula
\begin{gather}\label{eqn:from_GLA_to_HA}
\big(\kappa^k\big)_y(X) = \alpha(X_{\bar{1}}) + [y, \alpha(X_{\bar{0}})]_{E^k},
\end{gather}
where $y\in E^k$, turns $E^k$ into a higher Lie algebroid. The proof is left to the reader. (One should notice that vector fields $F_{(a, b)}\in\VF(\g)$, $F_{(a,b)}\colon y\mapsto [y, a]_{\g} + b$, where $\g$ is an arbitrary Lie algebra and $y, a, b\in \g$, form a Lie subalgebra of $\VF(\g)$ isomorphic with $\g \oplus \eps \g$.) We shall show that $\big(E^k,\kappa^k\big)$ is a quotient of $\big(\T^{k-1} \g_0, \kappa^k_{\g_0}\big)$ if $\alpha$ is surjective. Let $J := \ker \alpha = \oplus_{i=0}^{k-1} J_i$, so $\g_i = \g_0/J_i$. Consider the diagram \eqref{diag:df_quotient_kappa_k} in our case:
\begin{gather*}
\xymatrix{
\TT{k} \g_0 \ar[d]_{T^k\alpha_0 = \id_{\T^k \g_0}} \ar@{-|>}[rr]^{\kappa^{k}_{\g_0}} && \T \T^{k-1}\g_0\ar[d]^{\T \alpha}\\
\TT{k} \g_0 \ar@{--|>}[rr]^{ \kappa^{k}} && \T E^k.
}
\end{gather*}
Take $y\in E^k$ and $X\in \T^k \g_0$ as above. Let $\tilde{y} \in y + J$ be any pre-image of $y$ under the projection $\alpha$. Using \eqref{eqn:kappa_l_GG_expd} we find that the vector $\big(\kappa^k_{\g_0}\big)_{\tilde{y}} X \in \T_{\tilde{y}} \T^{k-1}\g_0$ is represented by the curve $t\mapsto \tilde{y} + t (X_{\bar{1}} + [\tilde{y}, X_{\bar{0}}])\in \T^{k-1}\g_0$. Therefore $\T \alpha \big(\big(\kappa^k_{\g_0}\big)_{\tilde{y}} X\big) \in \T E^k$ does not depend on the choice of $\tilde{y}$ and is given by the formula~\eqref{eqn:from_GLA_to_HA} as $\alpha(\tilde{y}) = y$ and $\alpha([\tilde{y}, X_{\bar{0}}]) = [y, \alpha(X_{\bar{0}})]_{E^k}$.
\end{ex}

\section{Final remarks}\label{sec:final}
The main idea of this paper was to use the framework of \Zakrzewski s in order to provide a proper language to describe higher analogs of (Lie) algebroids, having in mind potential applications in variational calculus and geometric mechanics. Our studies suggest a~few (in our opinion interesting) directions of future research, which we discuss below.

{\bf Left-twisted algebroids.} In Lemma~\ref{lem:kappa_core} we studied linear \ZM s $\kappa\colon \T\sigma\relto \tau_E$ which induce the identity on the core. By modifying the latter condition we can define generalization of the concept of an algebroid. For example if $\kappa$ is a linear \ZM\ which induces the identity on the base $\kappa\cap(M\times M)=\graf(\id_M)\subset M\times M$ then the left and right anchors $\rhoL,\rhoR\colon E\ra \T M$ are still well-defined. Moreover, such a $\kappa$ induces a VB endomorphism on the cores $\phi\colon \sigma\ra \sigma$ over $\id_M$. By an analogous argument to the one used in the proof of Proposition~\ref{prop:sigma_from_kappa} such a $\kappa$ defines a \emph{left-$\phi$-twisted algebroid} structure which satisfies a modified equation~\eqref{eqn:bracket}
\begin{gather*}[f\cdot a,g\cdot b]=f\rhoL(a)(g)\cdot\phi(b)-g\rhoR(b)(f)\cdot a+fg\cdot[a,b].\end{gather*}
The converse implication is also true (the argument used in the proof of Lemma \ref{lem:kappa_from_sigma} works without any change): a left-$\phi$-twisted algebroid structure on $\sigma$ uniquely determines a linear \ZM\ $\kappa\colon \T\sigma\relto \tau_E$ which induces an endomorphism $\phi$ on the cores.

For example, a linear connection on the tangent bundle $\T M$ is an $\R$-bi-linear operator $\nabla\colon \X(M)\times\X(M)\ra \X(M)$ satisfying $\nabla_X(f\cdot Y)=f\cdot\nabla_XY+X(f)\cdot Y$ and $\nabla_{f\cdot X}Y=f\cdot\nabla_X Y$ for any vector fields $X,Y\in\X(M)$ and any smooth function $f\in\Cf(M)$. Now $[X,Y]:=-\nabla_YX$ is a left-$\phi$-twisted algebroid structure with both anchors being the identities $\id_{\T M}${,} and $\phi\colon \T M\ra\T M$ being the null map. The related \ZM\ $\kappa$ maps a given element $A\in \T\T M$ to the $\nabla$-horizontal lift of $\tau_{\T M}(A)$ at $\T\tau_M(A)$.

{\bf \Prealg s and reduction.} As we have seen in Section \ref{sec:var_calc} for many interesting applications, we do not need the full structure of a (higher) algebroid, but it is enough to have a~less rigid structure of a (higher) \prealg . It seems to us that such objects can na\-tu\-rally appear as reductions of higher tangent bundles, while considering (pseudo) group actions on the space of smooth curves on manifolds in the spirit of \cite{Kruglikov_Lychagin_2015} (see an example of such a situation in the last paragraph of Section~\ref{sec:var_calc}). Thus it would be interesting to ini\-tiate a systematic study of such objects. For example a structure of a \prealg\ of order one $(\sigma\colon E\ra M, \tau\colon F\ra M, \kappa)$ can be equivalently characterized as a pair of anchor maps $\rho^E\colon E\ra \T M $ and $\rho^F\colon F\ra \T M$ a VB morphism $\phi\colon F\ra E$ over the identity on~$M$ and a bi-linear bracket operation $[\cdot, \cdot]\colon \Sec(E)\times \Sec(F)\ra \Sec(E)$ satisfying, for every sections $a\in\Sec(E)$, $b\in\Sec(F)$ and functions $f,g\in\Cf(M)$, a Leibniz-like rule
\begin{gather*}[f\cdot a,g\cdot b]=f\rho^E(a)(g)\cdot\phi(b)-g\rho^F(b)(f)\cdot a+fg\cdot[a,b].\end{gather*}
In particular, left-twisted algebroids provide examples of \prealg s (with $\tau=\sigma$).

{\bf Applications to variational problems.} In Section~\ref{sec:var_calc} we only sketched some possibilities of applications of the theory of higher (pre)algebroids in geometric mechanics. We postpone a more detailed study to a separate publication. Let us only mention here the problem of developing a constrained higher-order Lagrangian dynamics in this setting. Following the elegant treatment of the first-order case in \cite{GG_var_calc_gen_alg_2008}, for a Lagrangian system constituted by a function $L\colon E^k\ra\R$ on a higher algebroid $\kappa^k\colon \T^k E^1\relto \T E^k$, there are basically two different concepts of constraints. \emph{Vakonomic constraints} should be understood as a graded subbundle $D^k\subset F^k$ and the related dynamics is generated by these admissible variations which are valued in $\T D^k$. On the other hand, \emph{nonholonomic constraints} are constituted by restricting the set of generators to a subbundle $D\subset E^1$. That is, admissible variations are obtained from the restriction $\kappa^k|_{\T^k D}\colon \T^k D\relto \T E^k$.

{\bf Structure of (Lie) higher algebroids.} One of us initiated a study of the internal structure of (Lie) higher algebroids. It turns out that, at least in \degree\ 2, (Lie) higher algebroids have a geometric description in terms of a collection of bundle maps and differential operators which should satisfy certain compatibility conditions. The details can be find in a forthcoming publication \cite{Rotkiewicz_aha}.

\appendix

\section{Appendix -- proofs of technical results}\label{sec:app}

{\bf A proof of Lemma~\ref{lem:kappa_from_sigma}.} 
Note that, according to Lemma \ref{lem:kappa_core}\eqref{cond:core_action}, every relation satisfying~\eqref{cond:kappa_biliner} and~\eqref{cond:core} must respect the natural action of the core $C\simeq E$ on $\T E$. Thus if the assertion holds, the following generalization of formula \eqref{eqn:kappa_from_bracket}
\begin{gather}\label{eqn:kappa_from_bracket_core}
\kappa_a\left[\T b(\rhoL(a))\plus c\right]:=\T a(\rhoR(b))\plus [a,b]\plus c
\end{gather}
is valid for every sections $a,b,c\in\Sec_M(E)$. We shall show that this generalized formula defines a differential relation $\kappa$ satisfying conditions~\eqref{cond:kappa_biliner} and~\eqref{cond:core}. This will end the proof.

Denote $F(a,b):=\T b(\rhoL(a))$ and $G(a,b):=\T a(\rhoR(b))\plus [a,b]$. Leaving aside (for a moment) the problem of correctness of the definition of $\kappa$, note that formula \eqref{eqn:kappa_from_bracket_core} relates an element $F(a,b)\plus c$, whose $\T\sigma$- and $\tau_E$-projections are, respectively, $\rhoL(a)$ and $b$, with an element $G(a,b)\plus c$, for which these projections are $\rhoR(b)$ and $a$, respectively. We clearly see that the \ZM\ $\kappa$ (if correctly defined) covers the left anchor $\rhoL$ and projects to the graph of the right anchor $\rhoR$ under $\tau_E\times\T\sigma$.

We want to prove that $\kappa$ is a \ZM\ over $\rhoL$. Note that every element $B$ of the $\T\sigma$-fibre $(\T E)_{\rhoL(a)}$ for any $a\in E_x$ can be represented as $B=\T b|_x(\rhoL(a))\plus c|_x$ for some sections $b,c\in \Sec_M(E)$, and thus \eqref{eqn:kappa_from_bracket_core} gives us the value of $\kappa_a$ on every element of $(\T E)_{\rhoL(a)}$. (Note that if $\rhoL(a)=0$, then also $\T b|_x(\rhoL(a))$ is a null vector regardless of the choice of section $b$ and hence the initial formula \eqref{eqn:kappa_from_bracket} is not sufficient to determine the values of $\kappa_a$ for every possible element of $(\T E)_{\rhoL(a)}$. This justifies the need of using the extended formula \eqref{eqn:kappa_from_bracket_core}.) It is enough to show that the value $\kappa_a(B)$ does not depend on the chosen presentation $B=F(a,b)|_x\plus c|_x${,} and that the resulting differential relation is bi-linear.

It follows directly from \eqref{eqn:bracket} that for any smooth function $f\in \Cf(M)$ we have $G(f\cdot a,b)=f\cdot_{\T \sigma}G(a,b)$, i.e., $G(a,b)$ is tensorial with respect to $a$, and hence the value of $\kappa_a(B)$ given by \eqref{eqn:kappa_from_bracket_core} (for $B$ as above) does not depend on the particular choice of the section $a$, but only on the value~$a(x)$. Now assume that we present $B=F(a,b)|_x \plus c|_x$ in a different way as $B=F(a,b')|_x\plus c'|_x$. We shall show that formula \eqref{eqn:kappa_from_bracket_core} gives the same value of $\kappa_a(B)$ for both presentations. Clearly, $b|_x=b'|_x$ and hence we may present $b'-b=f\cdot b''$ for some smooth function~$f$ vanishing at~$x$ and some (local) section $b''\in\Sec_M(E)$. By our assumption,
\begin{gather*}
B =F(a,b)|_x\plus c|_x=\T b|_x(\rhoL(a))\plus c|_x=\T b'|_x(\rhoL(a))\plus c'|_x=\T(b+f\cdot b'')|_x(\rhoL(a))\plus c'|_x\\
\hphantom{B}=F(a,b)|_x+f|_x\cdot F(a,b'')|_x\plus\rhoL(a)(f)\cdot b''|_x\plus c'|_x=F(a,b)|_x\plus\rhoL(a)(f)\cdot b''|_x\plus c'|_x,
\end{gather*}
and we conclude that $\rhoL(a)(f)\cdot b''|_x=(c-c')|_x$. Now we can use this fact to get
\begin{gather*}
G(a,b)|_x\plus c|_x =\T a|_x(\rhoR(b))\plus[a,b]|_x\plus c|_x=\T a|_x(\rhoR(b'))\plus[a,b-f\cdot b'']|_x\plus c|_x\\
\hphantom{G(a,b)|_x\plus c|_x}{} =G(a,b')\minus f|_x\cdot[a,b'']|_x\minus \rhoL(a)(f)\cdot b''|_x\plus c|_x\\
\hphantom{G(a,b)|_x\plus c|_x}{} =G(a,b')\minus \rhoL(a)(f)\cdot b''|_x\plus c|_x =G(a,b')|_x\plus c'|_x.
\end{gather*}
It follows that $\kappa_a(B)$ is indeed well-defined.

Finally to show that $\kappa$ defined by formula \eqref{eqn:kappa_from_bracket_core} is bi-linear we need to check that if $(B,A)\in \kappa$ then also $(f\cdot_{\T\sigma}B,f\cdot_{\tau_E}A)\in \kappa$ and $(f\cdot_{\tau_E}B,f\cdot_{\T\sigma}A)\in \kappa$. This can be easily done using the following properties od $F$ and $G$:
\begin{gather}
G(f\cdot a,b)=f\cdot_{\T\sigma} G(a,b), \qquad F(f\cdot a,b)=f\cdot_{\tau_E} F(a,b),\nonumber\\
G(a,f\cdot b)=f\cdot_{\tau_E} G(a,b)\plus\rhoL(a)(f)b,\qquad F(a,f\cdot b)=f\cdot_{\T\sigma} F(a,b)\plus\rhoL(a)(f)b,\label{eqn:rescalling}
\end{gather}
which follow directly from \eqref{eqn:bracket}. 

{\bf A proof of Proposition~\ref{prop:sigma_from_kappa}.} 
From the results of Lemma~\ref{lem:kappa_core} we already know that every linear \ZM\ inducing the identity on the cores gives rise to a well-defined left and right anchor maps. Thus we need only to check if formula \eqref{eqn:bracket_from_kappa} defines an algebroid bracket compatible with these anchors. First note that given any sections $a,b\in \Sec_M(E)$ the right-hand side of~\eqref{eqn:bracket_from_kappa} is a difference of two elements of $\T E$ with the same $\tau_E$-projection $a$ and the same $\T\sigma$-projection~$\rhoR(b)$, thus a vertical vector. That is, $[a,b]\in\Sec_M(E)$ is well-defined. To check that this bracket satisfies the Leibniz rule \eqref{eqn:bracket} we have to study the behaviour of formula \eqref{eqn:bracket_from_kappa} under rescaling $a\mapsto f\cdot a$ and $b\mapsto g\cdot b$. The calculations are basically the same as in the proof of Lemma~\ref{lem:kappa_from_sigma}~-- see formulas \eqref{eqn:rescalling}. 

{\bf A proof of Proposition~\ref{prop:alg_morphism}.} 
Note first that if we have a $\catZM$-morphism \mbox{$(\T \phi,\T\phi)\colon \kappa\mZM\kappa'$}, then (see diagram \eqref{d:ZMtoZM}) the commutativity at the level of base maps (the left anchors) gives us precisely the commutativity of the first diagram in \eqref{eqn:compatible_anchors}. The commutativity of the se\-cond diagram follows from an observation that $(\T \phi,\T\phi)$ is also a $\catZM$-morphism between~$\kappa^\T$ and~${\kappa'}^\T$, whose base maps are the right anchors (cf.\ Lemma~\ref{lem:kappa_core} and Remark~\ref{rem:kappa_T}). This is an immediate consequence of Proposition~\ref{p:ZM-morphism} where the characterisation of being a $\catZM$-morphism does not depend on the direction of the considered relations, in contrast to the formulation of Definition~\ref{def:zm_morphism}. From now on we can thus assume that \eqref{eqn:compatible_anchors} holds.

Choose any two sections $a,b\in\Sec_M(E)$ and let $\phi_\ast a=\sum_i f_i\cdot \und{\phi}^\ast a_i$ and $\phi_\ast b=\sum_j g_i\cdot \und{\phi}^\ast b_j$ as in Definition~\ref{def:algebroid_morphism}. In the following calculations we understand a section $s\in\Sec_M(\und{\phi}^\ast E')$ as a map $s\colon M\ra E'$ such that $s(x)\in E'_{\und{\phi}(x)}$ and thus $\V_{s}{s'}$, where $s, s'\in \Sec_M(\und{\phi}^\ast E')$, is a map $M \ra \T E'$, $x\mapsto \V_s{s'}(x)$, where $\V_s{s'}(x)$ is represented by the curve $t\mapsto s(x)+ t s'(x) \in E'_{\und{\phi}(x)}$. We have
\begin{gather*}
\T\phi [\T a(\rhoR(b)) ]= \sum_i [\rhoR(b)(f_i)\cdot \V_{\phi_\ast a}\und{\phi}^\ast a_i+f_i\cdot_{\T \sigma'} \T a_i \T \und{\phi}(\rhoR(b)) ]\\
\hphantom{\T\phi [\T a(\rhoR(b)) ]}{} \overset{\eqref{eqn:compatible_anchors}}= \sum_i [\rhoR(b)(f_i)\cdot\V_{\phi_\ast a} \und{\phi}^\ast a_i+f_i \cdot_{\T\sigma'} \T a_i \, \rhoR'(\phi_\ast b) ]\\
\hphantom{\T\phi [\T a(\rhoR(b)) ]}{}=\sum_i\rhoR(b)(f_i)\cdot \V_{\phi_\ast a} \und{\phi}^\ast a_i +\sum_{i,j}f_i\cdot_{\T\sigma'}g_j\cdot_{\tau_{E'}} \T a_i \rhoR'(\und{\phi}^\ast b_j).
\end{gather*}

Similarly,
\begin{gather*}\T\phi [\T b(\rhoL(a)) ]=
\sum_j\rhoL(a)(g_j)\cdot \V_{\phi_\ast b}\und{\phi}^\ast b_j+\sum_{i,j}g_j\cdot_{\T\sigma'} f_i \cdot_{\tau_{E'}}\T b_j \rhoL'(\und{\phi}^\ast a_i)\end{gather*}
and we conclude that
\begin{gather*}
\kappa'_{\phi(a)} [\T\phi [\T b(\rhoL(a)) ] ]-\T\phi [\T a(\rhoR(b)) ] \\
\qquad{}= \V_{\phi_\ast a}\bigg[\sum_j\rhoL(a)(g_j)\cdot \und{\phi}^\ast b_j-\sum_i\rhoR(b)(f_i)\cdot
 \und{\phi}^\ast a_i +\sum_{i,j}f_ig_j\cdot\und{\phi}^\ast[a_i,b_j]'\bigg].
\end{gather*}
Applying the tangent map $\T\phi$ to formula \eqref{eqn:bracket_from_kappa} leaves us with
\begin{gather*}\V_{ \phi_\ast a}\phi_\ast[a,b]:=\T\phi [\kappa_a [\T b(\rhoL(a)) ] ]-\T\phi [\T a(\rhoR(b)) ].\end{gather*}
The comparison of the latter two equations shows that~\eqref{eqn:algebroid_morphism} holds if and only if $\T\phi [\kappa_a\! [\T b(\rhoL(a)) ] ]\!\!$ $=\kappa'_{\phi(a)} [\T\phi [\T b(\rhoL(a)) ] ]$. The latter equality (for every~$a$ and~$b$) is equivalent to $(\T\phi,\T\phi)$ being a~$\catZM$-morphism between~$\kappa$ and~$\kappa'$. 

{\bf Equivalent characterizations of algebroid morphisms.}
\begin{prop}\label{prop:alg_morphism_eqv} Let $\sigma\colon E\ra M$ and $\sigma'\colon E'\ra M'$ be vector bundles carrying $($Lie$)$ algebroid structures as in Definition~{\rm \ref{def:algebroid_morphism}}. Let
 $\phi\colon E\ra E'$ over $\und{\phi}\colon M\ra M'$ be a VB morphism. The following are equivalent:
\begin{enumerate}[$(i)$]\itemsep=0pt
\item \label{cond:alg_morph_Mac} $\phi$ is an algebroid morphism $($in the sense of Definition~{\rm \ref{def:algebroid_morphism})}.
\item \label{cond:alg_morph_graph} The graph of $\phi$ is a subalgebroid of the product $($Lie$)$ algebroid on $\sigma\times \sigma'$.
\item \label{cond:alg_morph_Leibniz} The graph of $\phi^\ast$ is a coisotropic submanifold of the Leibniz manifold~{\rm \cite{JG_PU_Algebroids_1999}} $(E^\ast\times {E'}^\ast, -\Lambda_E + \Lambda_{E'})$.
\end{enumerate}
\end{prop}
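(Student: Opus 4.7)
The plan is to establish the chain (i) $\Leftrightarrow$ (ii) $\Leftrightarrow$ (iii) by unpacking definitions for the first equivalence and invoking the Grabowski--Urba\'nski correspondence between algebroid structures and linear Leibniz structures for the second.

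For (i) $\Leftrightarrow$ (ii), I would apply Definition~\ref{def:sublagebroid} to the vector subbundle $\graf(\phi)\subset E\times E'$ lying over $\graf(\und{\phi})\subset M\times M'$, where $E\times E'$ carries the product algebroid structure whose anchors are $\rhoL\times\rhoL'$ and $\rhoR\times\rhoR'$ and whose bracket acts componentwise. Condition~\eqref{df:sub_algebroid_1} then reads precisely as $\T\und{\phi}\circ\rhoL=\rhoL'\circ\phi$ together with the analogous identity for the right anchors, i.e., condition~\eqref{eqn:compatible_anchors}. Condition~\eqref{df:sub_algebroid_2} demands that for any two sections $(a_i,a_i')\in\Sec(\sigma\times\sigma')$ whose restrictions to $\graf(\und{\phi})$ lie in $\graf(\phi)$ (equivalently, $\phi\circ a_i = a_i'\circ\und{\phi}$), the bracket $([a_1,a_2],[a_1',a_2']')$ also restricts into $\graf(\phi)$. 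Choosing locally a decomposition $\phi_\ast a = \sum_i f_i\cdot \und{\phi}^\ast a_i$ (which is possible on any trivialising neighbourhood), extending the $a_i$'s arbitrarily off $\und{\phi}(M)$, and combining with the Leibniz rule~\eqref{eqn:bracket} for the product algebroid then translates this subalgebroid condition directly into formula~\eqref{eqn:algebroid_morphism}. Conversely, condition~\eqref{eqn:algebroid_morphism} applied to such decompositions yields that the bracket restricts into $\graf(\phi)$.

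For (ii) $\Leftrightarrow$ (iii), I would invoke the classical correspondence (cf.\ Remark~\ref{rem:kappa_dual} and \cite{JG_PU_Lie_alg_P-N_str_1997, JG_PU_Algebroids_1999}) identifying general algebroid structures on $\sigma\colon E\ra M$ with linear Leibniz structures (linear bivectors) $\Lambda_E$ on the dual bundle $E^\ast$, obtained as the dual of $\kappa$. Under this correspondence, the product algebroid on $\sigma\times\sigma'$ dualises to a linear Leibniz structure on $E^\ast\times{E'}^\ast$, vector subbundles $F\subset E\times E'$ go to their conormals $F^\circ\subset E^\ast\times{E'}^\ast$, and the classical theorem of \cite{JG_PU_Lie_alg_P-N_str_1997} asserts that $F$ is a subalgebroid if and only if $F^\circ$ is coisotropic. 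Taking $F=\graf(\phi)$, its conormal is easily identified with $\graf(\phi^\ast)$, up to a reordering of factors. The sign $-\Lambda_E+\Lambda_{E'}$ in the statement reflects the fact that dualising reverses the ``direction'' of $\phi$ (cf.\ Definition~\ref{def:zm}): the factor corresponding to the source of $\phi$ must be equipped with the opposite linear Leibniz structure for the graph correspondence to line up.

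The main technical point is the sign bookkeeping in step (ii) $\Leftrightarrow$ (iii): one must check carefully that the identification of $\graf(\phi)^\circ$ with $\graf(\phi^\ast)$, once the factors are reordered, introduces the minus sign on precisely the $\Lambda_E$ factor and not on $\Lambda_{E'}$. This is a standard but delicate computation which is routine once one fixes conventions for the pairing $E^\ast\otimes E\ra\R$ and the dualisation of VB morphisms.
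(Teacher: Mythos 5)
Your argument is correct and essentially reproduces the paper's proof: the equivalence (i)$\Leftrightarrow$(ii) is obtained exactly as in the paper, by unwinding Definition~\ref{def:sublagebroid} for $\graf(\phi)$ over $\graf(\und{\phi})$ and using the decomposition $\phi_\ast a=\sum_i f_i\,\und{\phi}^\ast a_i$ together with the Leibniz rule of the product bracket (the paper makes your extension step explicit via the section $\tilde a(x,y)=\big(a(x),\sum_i f_i(x)a_i(y)\big)$, which also covers the point that arbitrary, not only split, extensions are handled thanks to the independence of the induced bracket on the choice of extension). For (ii)$\Leftrightarrow$(iii) the paper likewise outsources the work, citing \cite[Theorem~5.1]{JG_mod_class_skew_alg_rel_2012} (stated for skew algebroids, with a proof that rewrites verbatim for general ones), which plays exactly the role of your ``subalgebroid $\Leftrightarrow$ coisotropic annihilator'' principle combined with the sign-flipped identification of the annihilator of $\graf(\phi)$ with $\graf(\phi^\ast)$.
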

\begin{proof}Recall \cite{JG_PU_Algebroids_1999} that a \emph{Leibniz manifold} $(M, \Lambda)$ is a manifold $M$ equipped with a contravariant 2-tensor $\Lambda$, and that \emph{linear} contravariant 2-tensors on the dual $\sigma^\ast\colon E^\ast\ra M$ of a vector bundle $\sigma\colon E \ra M$ are in one-to-one correspondence with general algebroid structures on $\sigma$. This correspondence is completely parallel to that between linear Poisson tensors and Lie algebroid structures.

The equivalence between \eqref{cond:alg_morph_graph} and \eqref{cond:alg_morph_Leibniz} is proved in \cite[Theorem~5.1]{JG_mod_class_skew_alg_rel_2012} for skew algebroids, however, the proof can be directly rewritten for general algebroids. Thus we need only to show the equivalence between~\eqref{cond:alg_morph_Mac} and~\eqref{cond:alg_morph_graph}.

Let $F\subset E\times E'$, $N\subset M\times M'$ be the graphs of $\phi$ and $\und{\phi}$, respectively, and assume that $\sigma\times \sigma'|_F\colon F\ra N$ is a subalgebroid of $\sigma\times \sigma'$. As $\phi$ is a vector bundle morphism, the base map~$\und{\phi}$ is smooth. Because $(\rhoL, \rhoL')$ and $(\rhoR, \rhoR')$ map $F$ to $\T N \subset \T(M\times M')$ we have commuting diagrams in~\eqref{eqn:compatible_anchors}. To prove~\eqref{eqn:algebroid_morphism} take sections $a,b\in\Sec_M(E)$, write $\phi\circ a = \sum_i f_i \und{\phi}^\ast a_i$ and similarly, $\phi\circ b = \sum_j g_j \und{\phi}^\ast b_j$
and notice that $(x, \und{\phi}(x))\mapsto (a(x), \phi \circ a(x))$ is a section of $F$ whose extension to a section of $\sigma\times \sigma'$ can be written as follows
\begin{gather*}
\tilde{a}(x, y) = \bigg(a(x), \sum_i f_i(x) a_i(y)\bigg),
\end{gather*}
and similarly for section $b$. We clearly have
\begin{gather*}
[\tilde{a}, \tilde{b}]_{\sigma\times \sigma'}(x, y) \\
\qquad{} = \bigg([a, b](x), \sum_{i,j} f_i(x) g_j(x) [a_i, b_j]'(y) + \rhoL(a)(g_j)(x)b_j(y) - \rhoR(b)(f_i)(x) a_i(y) \bigg),
\end{gather*}
hence $[\tilde{a}, \tilde{b}](x, \und{\phi}(x)) \in F$ implies~\eqref{eqn:algebroid_morphism}. This reasoning can be inverted proving the equivalence of conditions \eqref{cond:alg_morph_Mac} and \eqref{cond:alg_morph_graph}.
\end{proof}

{\bf A proof of Proposition~\ref{prop:algebroidal_relation}.} 
By definition{,} condition \eqref{cond:alg_rel_1} is equivalent to $\kappa$ being a~subalgebroid of the product (Lie) algebroid structure on $\sigma_1\times\sigma_2$. Note that for this structure the left and right anchors are, respectively, $\rhoL:=(\rho_{1L},\rho_{2L})$ and $\rhoR:= (\rho_{1R},\rho_{2R})$, whereas the algebroid bracket reads as
\begin{gather*}[(s_1,s_2),(s_1',s_2')]:=([s_1,s_1']_1,[s_2,s_2']_2)\end{gather*}
for any sections $s_1,s_1'\in\Sec_{M_1}(E_1)$ and $s_2,s_2'\in\Sec_{M_2}(E_2)$. It is thus enough to prove that conditions \eqref{sub_cond:a} and \eqref{sub_cond:b} are equivalent to conditions~\eqref{df:sub_algebroid_1} and~\eqref{df:sub_algebroid_2} of Definition~\ref{def:sublagebroid} taken for $\sigma':=\sigma_1\times\sigma_2|_r\colon E_1\times E_2\supset r=:E'\ra M':=\graf(\und{r})\subset M_1\times M_2$, $\sigma:=\sigma_1\times \sigma_2$ and $\kappa:=\kappa_1\times\kappa_2$.

First observe that for sections $s_1\in \Sec_{M_1}(E_1)$ and $s_2\in \Sec_{M_2}(E_2)$ the restriction $(s_1, s_2)|_{M'}$ is a section of the vector subbundle $\sigma'\subset \sigma_1\times \sigma_2$ if and only if $s_2=\hat{r}(s_1)$ and, moreover, every section of $\sigma'$ can be presented as such a restriction. Thus condition \eqref{sub_cond:a} is equivalent to the fact that $\sigma_I=(\sigma_{1I},\sigma_{2I})$ maps $E'\subset E_1\times E_2$ to $\T M'=\T\graf(\und{r})\subset\T M_1\times \T M_2$ for $I=L,R$. That is precisely condition \eqref{df:sub_algebroid_1} of Definition~\ref{def:sublagebroid} for $\sigma'$, $\sigma$ and $\kappa$ as above.

Next note that condition \eqref{sub_cond:b} means that the algebroid bracket on $\sigma_1\times\sigma_2$ is closed with respect to sections of the form $(s_1,\wh{r}(s_1))$, where $s_1\in\Sec_{M_1}(E_1)$. As we already observed such sections are $\sigma_1\times\sigma_2$-extensions of all possible sections of $\sigma'$. Since, by the remark following Definition~\ref{def:sublagebroid}, condition \eqref{df:sub_algebroid_1} of this definition guarantees that the induced bracket defined on~$\sigma'$ does not depend on the choice of extensions of sections of~$\sigma'$, we conclude that if~\eqref{sub_cond:a} holds then~\eqref{sub_cond:b} is equivalent to condition \eqref{df:sub_algebroid_2} of Definition~\ref{def:sublagebroid} for $\sigma'$, $\sigma$ and $\kappa$ as above. This ends the proof. 

{\bf A proof of Lemma~\ref{lem:char_kappa}.} 
The relation between the algebroid bracket and \ZM\ $\kappa$ is given by formula~\eqref{eqn:kappa_from_bracket}. As has been observed in Remark~\ref{rem:kappa_T}, $\kappa^T$ corresponds to another general algebroid structure with the bracket $[a,b]^T:=-[b,a]$ (note that this passage is possible due to the fact that $\kappa$ is bi-linear and that it induces the identity on the core). Consequently the symmetry condition $\kappa=\kappa^T$ is equivalent to the antisymmetry of the bracket $[a,b]=-[b,a]$. This proves~\eqref{item:kappa_skew}.

Assume now that the algebroid structure on $\sigma$ is skew. To prove \eqref{item:kappa_al} apply $\T\rho$ to~\eqref{eqn:bracket_from_kappa} to get
\begin{gather*}\V_{\rho(a)}\rho([a,b])=\T\rho (\kappa_a [\T b(\rho(a)) ] )-\T \rho(a)(\rho(b)).\end{gather*}
On the other hand, formula~\eqref{eqn:bracket_from_kappa} for the commutator of vector fields $\rho(a),\rho(b)\in \Sec_M(\T M)$ gives us
\begin{gather*}\V_{\rho(a)}[\rho(a),\rho(b)]_{\T M}:=(\kappa_{\T M})_{\rho(a)} [\T\rho(b)(\rho(a)) ]-\T \rho(a)(\rho(b)).\end{gather*}
By comparing the above two formulas it is clear that \eqref{eqn:al_algebroid} is equivalent to~\eqref{eqn:kappa_AL}.

 Point \eqref{item:kappa_lie} is equivalent to the fact that the dual map $\kappa^\ast\colon \T^\ast E\ra \T E^\ast$ (cf.\ Remark~\ref{rem:kappa_dual}) is a~linear Poisson map, which is a~well-known equivalent characterization of the Jacobi identity~\cite{JG_PU_Algebroids_1999}. By the results of~\cite{JG_mod_class_skew_alg_rel_2012} the latter condition is equivalent to $\kappa$ being the algebroidal relation. 

{\bf A proof of Proposition~\ref{prop:red_h_to_l}.}
\begin{enumerate}[(i)]\itemsep=0pt
\item Let $\big(x^a_{w}, X^A_{w}\big)$, $\big(y^i_{w}, Y^I_{w}\big)$ be graded coordinates on $E_1^k$ and $E_2^k$, respectively, where the capital letters indicate linear coordinates, i.e., $X^A_{w}$ and $Y^I_{w}$ are of weight $(1, w)$. A \ZM\ $r$ is given locally in $E_1^k \times E_2^k$ by a system of equations of the form
\begin{gather}\label{e:local_r}
r\colon \
\begin{cases}
x^a_{w} = f^a_{w}(y), \\
Y^I_{w} = \sum_A X^A_{w'} g_{A, w-w'}^I(y),
\end{cases}
\end{gather}
where $f^a_{w}$ and $g_{A, w}^I$ are (local) functions on $M_2^k$ of weight $w$. Equations for $r^{j} \subset E_1^j\times E_2^j$ are obtained from \eqref{e:local_r} by removing all equations in which at least one coordinate has weight greater than $j$. Clearly, such a system defines a \ZM\ as $r^j$ is a vector subspace of $E_1^j\times E_2^j \ra M_1^0\times M_2^0$ with the property: given $(y^i_{w})\in M_2^j$ and $\big(x_{w}^a, X_{w}^A\big) \in E_1^j$ such that $x^a_{w} = f^a_{w}(y)$ there is a unique $\big(y^i_{w}, Y^I_{w}\big) \in E_2^j$ (namely, $Y^I_{w}=\sum_A X^A_{w'} \, g_{A, w-w'}^I(y)$) satisfying the equations for $r^j$.

\item In view of Proposition \ref{p:ZM-morphism}, we should check whether $\big(A^j, B^j\big)\in r^j$ implies $\big(\phi_1^j\big(A^j\big), \phi_2^j\big(B^j\big)\big)$ $\in (r')^j$. This is straightforward: there exist $\big(A^k, B^k\big) \in r^k$ which project to $\big(A^j, B^j\big)$. We have $\big(\phi_1\big(A^k\big), \phi_2\big(B^k\big)\big) \in (r')^k$ as $(\phi_1, \phi_2)\colon r \mZM r'$ is a $\catZM$-morphism. But the latter element projects under the bundle reduction map $(\sigma_1)^k_j \times (\sigma_2)^k_j$ to $\big(\phi_1^j\big(A^j\big), \phi_2^j\big(B^j\big)\big)$ which is therefore an element of~$r^j$. 
\end{enumerate}

{\bf A proof of Proposition~\ref{p:algebroid_Tk_E}.} 
By Proposition~\ref{prop:sigma_from_kappa}, to prove that $\dd_{\TT{k}}\kappa$ defines a general algebroid structure on $\TT{k}\sigma$ it is enough to show that it is a linear \ZM\ and that it induces the identity on the cores. By the results of \cite[Theorem~A.5]{MJ_MR_models_high_alg_2015}, the higher tangent lift~$\TT{k}\kappa$ has these two desired properties. So do~$\kappa^k_E$ and $(\kappa^k_E)^{-1}$, which are not only \ZM s but, in fact, even isomorphisms of graded-linear bundles. We conclude that $\dd_{\TT{k}}\kappa$, which is a composition of these three \ZM s, also shares these properties, i.e., $\dd_{\TT{k}}\kappa$ indeed defines a graded algebroid structure. In fact, all the considered \ZM s are also $\TT{k}\R$-linear, with respect to the natural $\TT{k}\R$-action on the $\TT{k}$-lift of a vector bundle introduced in at the beginning of Section~\ref{ssec:higher_lie}. It follows that $\dd_{\TT{k}}\kappa$ is also $\TT{k}\R$-linear. We will use this property shortly.

The formula for anchor maps follows immediately from the definition of $\dd_{\TT{k}}\kappa$ -- we simply need to calculate the relevant projections of this \ZM\ to the legs of the considered DVBs.

\looseness=-1 Consider now any two sections $s_1,s_2\in\Sec_M(E)$. By applying the functor $\TT{k}$ to \eqref{eqn:kappa_from_bracket}, we get
\begin{gather*}\TT{k}\kappa_{\TT{k} s_1}\big[\TT{k}\T s_2\big(\TT{k}\rhoL\big(\TT{k} s_1\big)\big)\big]:=\TT{k}\T s_1\big(\TT{k}\rhoR\big(\TT{k}s_2\big)\big)\plus \TT{k}[s_1,s_2].\end{gather*}
Note that in the above formula element $\TT{k}[s_1,s_2]$ belongs to the core. Now let us compose this equality with maps $\kappa^k_E$ and $\big(\kappa^k_E\big)^{-1}$, which are core-identities, and use the definition of $\dd_{\TT{k}}\kappa$ to get
 \begin{gather*}(\dd_{\TT{k}}\kappa)_{\TT{k} s_1}\big[\T\TT{k} s_2\big(\dd_{\TT{k}}\rhoL\big(\TT{k} s_1\big)\big)\big]:=\T\TT{k} s_1\big(\dd_{\TT{k}}\rhoR\big(\TT{k}s_2\big)\big)\plus \TT{k}[s_1,s_2].\end{gather*}
By comparing the above with formula \eqref{eqn:kappa_from_bracket} for the algebroid structure $\big(\TT{k}\sigma, \dd_{\TT{k}}\kappa\big)$, we may conclude that the algebroid bracket $[\cdot,\cdot]_{\dd_{\TT{k}}}$ on $\TT{k}\sigma$ satisfies
\begin{gather*}\big[\TT{k}s_1,\TT{k} s_2\big]_{\dd_{\TT{k}}}=\TT{k}[s_1,s_2],\qquad \text{i.e.,}\qquad \big[s_1^{(k)},s_2^{(k)}\big]_{\dd_{\TT{k}}}=[s_1,s_2]^{(k)}\end{gather*}
for any sections $s_1,s_2\in\Sec_M(E)$. What is more, since, as we have observed before, $\dd_{\TT{k}}\kappa$ is $\TT{k}\R$-linear, the resulting bracket operation is $\TT{k}\R$-bilinear, and hence~\eqref{p:algebroid_Tk_E_item_bracket_formula} holds:
\begin{gather*}
\left[ \frac{k!}{(k-\alpha)!}s_1^{(k-\alpha)}, \frac{k!}{(k-\beta)!}s_2^{(k-\beta)}\right]_{\dd_{\TT{k}}}= \big[\eps^\alpha\cdot s_1^{(k)},\eps^\beta\cdot s_2^{(k)}\big]_{\dd_{\TT{k}}}\\
\hphantom{\left[ \frac{k!}{(k-\alpha)!}s_1^{(k-\alpha)}, \frac{k!}{(k-\beta)!}s_2^{(k-\beta)}\right]_{\dd_{\TT{k}}}}{} =\eps^{\alpha+\beta}\cdot[s_1,s_2]^{(k)}= \frac{k!}{(k-\alpha-\beta)!} [s_1,s_2]^{(k-\alpha-\beta)}.
\end{gather*}
This formula completely determines the algebroid bracket on $\TT{k}\sigma$ since $(k-\alpha)$-lifts of sections of~$\sigma$ for all possible~$\alpha$'s span the module of sections of~$\TT{k}\sigma$ (cf.\ the remark following Definition~\ref{def:lift_section}).

Finally, the fact that the lifted algebroid structure preserves the properties of being \linebreak skew/AL/Lie follows directly from the anchor and the bracket formulas. 

{\bf A proof of Proposition~\ref{prop:sub_algebroidal_rel}.} 
 To prove \eqref{item:T^kr restricts_fine} note first that, by \cite[Theorem~A.5]{MJ_MR_models_high_alg_2015}, the relations $\TT{k} r$ and $\TT{k} r'$ are \ZM s. We have to prove that $\TT{k} r$ restricts fine to a subbundle $\TT{k} E_1'\times \TT{k} E_2'$. Take a $k$-jet $\und{v}_2^k\in \TT{k} M_2'$ which is represented by a curve $\und{\gamma}_2(t)\in M_2'$ and denote $\und{\gamma}_1(t):=\und{r}(\und{\gamma}_2(t))$. Since $\und{r}(M_2')\subset M_1'$, the curve $\und{\gamma}_1(t)$ lies in $M_1'$ . Let us take any $k$-jet $v_1^k\in \TT{k} E_1'$ which lies over $\und{v}_1^k :=\tclass{k}{\und{\gamma}_1} \in \TT{k} M_1'$. We can choose a representative $\gamma_1(t)\in E_1$ of $v_1^k$ such that~$\gamma_1(t)$ projects to $\und{\gamma}_1(t)$. Then we may write
\begin{gather*}
\big(\TT{k} r\big)_{\und{v}_2^k}\big(v_1^k\big) = \tclass{k}{ r_{\und{\gamma}_2(t)}(\gamma_1(t))}\in \TT{k} E_2',
\end{gather*}
which ends the proof of point \eqref{item:T^kr restricts_fine}.

 To show \eqref{item:T^kE_as_subalgeroid} observe that by our hypothesis, $\kappa$ restricts fine to $\T E'\times \T E'$, hence by \eqref{item:T^kr restricts_fine} $\TT{k} \kappa\colon \TT{k} \T \sigma\relto \TT{k} \tau_{E}$ restricts fine to $\TT{k}\T E'\times \TT{k} \T E'$. Since $\dd_{\TT{k}}\kappa$ is obtained by composing~$\TT{k} \kappa$ with the vector bundle isomorphisms $\kappa^k_E$ and its inverse, the relation $\dd_{\TT{k}}\kappa$ also restricts fine to $\T \TT{k} E'\times \T \TT{k} E'$, hence $\big(\TT{k} \sigma',\dd_{\TT{k}}\kappa'\big)$ is a subalgebroid of $\big(\TT{k} \sigma, \dd_{\TT{k}}\kappa'\big)$, as was claimed.

 Finally, point \eqref{item:T^kr_also_algebroidal} follows easily from~\eqref{item:T^kE_as_subalgeroid}. Since $r$ is a subalgebroid of $(\sigma_1\times\sigma_2,\kappa_1\times\kappa_2)$, by~\eqref{item:T^kE_as_subalgeroid} $\TT{k} r$ is a subalgebroid of $\big(\TT{k}(\sigma_1\times \sigma_2), \dd_{\TT{k}}(\kappa_1\times\kappa_2)\big) \simeq \big(\TT{k} \sigma_1\times \TT{k}\sigma_2,\dd_{\TT k}\kappa_1\times\dd_{\TT k}\kappa_2\big)$, hence $\TT{k} r$ is an algebroidal relation. 

{\bf A proof of Proposition~\ref{prop:E_k_as_AL_HA}.} 
 Assume first that $(\sigma,\kappa)$ is an AL algebroid. From the results of \cite[Theorem~4.5(ix)]{MJ_MR_models_high_alg_2015} we already know that $\kappa^{[k]}$ is a \ZM . We shall prove first that it is a weighted \ZM . For this it is enough to check that $\kappa^{[k]}$ is a $\N$-graded submani\-fold of $\tau^k_E\times \T\tau^{[k]}\colon \TT{k} E\times \T\E{k} \ra E\times \T M$. A general idea is to consider the latter as a $\N$-graded submanifold of $\TT{k-1}\T E\times \T\TT{k-1} E\ra E\times \T M$ and notice that although $\kappa$ interchanges the two homogeneity structures on $\T E$, the homogeneity structure in our concern (which is defined by the sum of weight vector fields of degrees $1$ and $k-1$) is respected by both $\TT{k-1}\kappa$ and $\kappa^{k-1}_E$, thus also by $\kappa^{[k]}$ (which is a fine restriction of the composition of these relations). In detail,
$\T\TT{k-1} E$ is a triple-\grB\ of $3$-\degree\ $(1, k-1, 1)$ with legs $\T\TT{k-1} \tau\colon \T\TT{k-1} E\ra \T\TT{k-1} M$, $\T\tau^{k-1}_E\colon \T\TT{k-1} E \ra \T E$, and $\tau_{\TT{k-1} E}\colon \T\TT{k-1} E\ra \TT{k-1} E$, respectively. The multi-\grB\ structure on $\T\TT{k-1} E$ is defined by the weight vector fields $\T\TT{k-1}\Delta_E$, $\T\Delta^{k-1}_E$, and~$\Delta^1_{\TT{k-1} E}$. Since $\kappa$ is homogeneous with respect to $\big(\Delta^1_E, \T\Delta_E\big)\in\VF(\T E \times \T E)$ and $\big(\T\Delta_E, \Delta^1_E\big)$, its $\thh{(k-1)}$-tangent lift $\TT{k-1}\kappa$ is homogeneous with respect to
\begin{gather*}\big(\TT{k-1}\Delta^1_E, \TT{k-1}\T\Delta_E\big), \big(\TT{k-1}\T\Delta_E, \TT{k-1}\Delta^1_E\big), \big(\Delta^{k-1}_{\T E}, \Delta^{k-1}_{\T E}\big) \in \VF\big(\TT{k-1} \T E \times \TT{k-1} \T E\big).\end{gather*}
Similarly, $\kappa^{k-1}_E$ is homogeneous with respect to
\begin{gather*}
\big(\TT{k-1}\T\Delta_E, \T\TT{k-1} \Delta_E\big), \big(\Delta^{k-1}_{\T E}, \T\Delta^{k-1}_E\big), \big(\TT{k-1} \Delta^1_E, \Delta^1_{\TT{k-1} E}\big) \in \VF\big(\TT{k-1}\T E\times \T \TT{k-1} E\big).
\end{gather*}
In particular, $\TT{k-1}\kappa$ is homogeneous with respect to $\big(\TT{k-1}\Delta^1_E+\Delta^{k-1}_{\T E}, \TT{k-1}\T\Delta_E + \Delta^{k-1}_{\T E}\big)$ while $\kappa^{k-1}_E$ is homogeneous with respect to $\big(\TT{k-1}\T\Delta_E + \Delta^{k-1}_{\T E}, \T\TT{k-1} \Delta_E+\T \Delta^{k-1}_E\big)$ thus the composition $\kappa^{k-1}_E\circ \TT{k-1}\kappa$ is homogeneous with respect to $\big(\TT{k-1}\Delta^1_E+\Delta^{k-1}_{\T E}, \T\TT{k-1} \Delta_E+\T \Delta^{k-1}_E\big)$. The result follows because the \degree-$k$ \grB\ structures on $\TT{k} E \ra E$ and $\T\E{k}\ra \T M$ are encoded by $\TT{k-1}\Delta^1_E+\Delta^{k-1}_{\T E}$ and $\T\TT{k-1} \Delta_E+\T \Delta^{k-1}_E$, respectively (see \cite[Theorem~4.5(i)]{MJ_MR_models_high_alg_2015}).

The commutativity of the diagram
\begin{gather}\label{d:ZMrr}\begin{split}&
\xymatrix{
& \TT{k} E\ar[ld]_{\TT{k} \rho} \ar[dd]^<<<<<{\TT{k} \tau} \ar@{-|>}[rr]^{ \kappa^{[k]}} && \T \E{k} \ar[ld]_{\T \rho^{[k]}}\ar[dd]^{\tau_{\E{k}}} \\
\TT{k}\T M\ar[dd]_{\TT{k} \tau_ M} \ar@{-|>}[rr]^<<<<<<<<<<{\kappa^k_M} && \T \TT{k} M \ar[dd]^<<<<<<<{\tau_{\TT{k} M}} & \\
& \TT{k} M\ar[ld]_{=} && \E{k} \ar[ld]_{ \rho^{[k]}} \ar[ll]_>>>>>>>>>{ \rho^{[k]}} \\
\TT{k} M && \TT{k} M \ar[ll]_{=} &
}\end{split}
\end{gather}
follows immediately from \cite[Theorem~4.5(xi)]{MJ_MR_models_high_alg_2015}, while the reduction of $\kappa^{[k]}$ from \degree\ $k$ to \degree~$1$ coincides obviously with the relation $\kappa$ defining AL algebroid structure on $E$.

Now if $(\sigma,\kappa)$ is Lie, $\kappa$ is an algebroidal relation between $\T \sigma\colon \T E\ra \T M$ and $\tau_E\colon \T E\ra E$. If follows from Proposition~\ref{prop:sub_algebroidal_rel}\eqref{item:T^kr_also_algebroidal} that $\TT{k-1}\kappa$ is an algebroidal relation between the $\st{(k-1)}$-tangent lift algebroids on $\TT{k-1}\T \sigma$ and $\TT{k-1}\tau_E$. What is more, $\kappa^{k-1}_E$ is an algebroid isomorphism between $\TT{k-1}\tau_E$ and $\tau_{\TT{k-1}E}$, and thus the composition $\kappa^{k-1}_E\circ \TT{k-1}\kappa$ is an algebroidal relation between $\TT{k-1}\T \sigma$ and $\tau_{\TT{k-1}E}$. By \cite[Proposition~4.6]{MJ_MR_models_high_alg_2015}, $\kappa^{k-1}_E\circ \TT{k-1}\kappa\subset\TT{k-1}\T E\times\T\TT{k-1} E$ restricts fine to $\kappa^{[k]}\subset\TT{k} E\times \T E^{[k]}$, and so, in light of Proposition~\ref{prop:sub_r_also_algebroidal}, $\kappa^{[k]}$ is also an algebroidal relation, i.e., $\big(\sigma^{[k]},\kappa^{[k]}\big)$ is Lie.

It remains to prove that $\big(\sigma^{[k]}, \kappa^{[k]}\big)$ is strong. We shall proceed by induction on $k$ and assume that $\kappa^{[k-1]}$ is the identity on the core bundle. To show the same for $\kappa^{[k]}$ we observe that the reduction from order $k$ to $k-1$ of $\kappa^{[k]}$ is $\kappa^{[k-1]}$, hence it is enough to show that $ \kappa^{[k]}$ is the identity on the ultracore bundle. But the inclusions $\TT{k} E\subset \TT{k-1}\T E$ and $\T \E{k}\subset \T \TT{k-1} E$ of graded-linear bundles give identity on the ultracores. The same is true for relations~$\TT{k-1} \kappa$ and~$\kappa^{k-1}_E$ what completes our proof. 

\subsection*{Acknowledgments}
The authors are grateful to the anonymous referees who put a lot of effort in improving the quality and clarity of the manuscript. This research was supported by the Polish National Science Center under the grant DEC-2012/06/A/ST1/00256.

\LastPageEnding

\end{document}